\newlength\fullwidth
\numberwithin{equation}{section}
\DeclareMathSymbol{\leqslant}{\mathalpha}{AMSa}{"36} % nicer `smaller
\DeclareMathSymbol{\geqslant}{\mathalpha}{AMSa}{"3E} % nicer `larger or equal' 
\DeclareMathSymbol{\eset}{\mathalpha}{AMSb}{"3F}     % nicer `emptyset' 
\renewcommand{\geq}{\;\geqslant\;}                   % redef. of > or = 
\newcommand{\sumtwo}[2]{\sum_{\substack{#1 \\ #2}}} % sum with 2 lines 
\def\1{\ifmmode {1\hskip -3pt \rm{I}} \else {\hbox {$1\hskip -3pt \rm{I}$}}\fi}
 \newcommand{\si}{\sigma } 
\newcommand{\be}{\begin{equation} } \newcommand{\tmix}{T_{\rm mix}} 
\newtheorem{Theorem}{Theorem}[section] 
\newtheorem{Lemma}[Theorem]{Lemma} 
\newtheorem{Proposition}[Theorem]{Proposition} 
\newtheorem{Corollary}[Theorem]{Corollary} 
\newtheorem{remark}[Theorem]{Remark}
\newtheorem{claim}[Theorem]{Claim}
\newtheorem{definition}[Theorem]{Definition}
\newcommand{\N}{\mathbb N}
\newcommand{\R}{\mathbb R}
\newcommand{\Z}{\mathbb Z}
\newcommand{\bP}{{\bf P}} 
\newcommand{\bE}{{\bf E}} 
\newcommand{\cA}{\ensuremath{\mathcal A}} 
\newcommand{\cB}{\ensuremath{\mathcal B}} 
\newcommand{\cD}{\ensuremath{\mathcal D}} 
\newcommand{\cE}{\ensuremath{\mathcal E}} 
\newcommand{\cF}{\ensuremath{\mathcal F}} 
\newcommand{\cG}{\ensuremath{\mathcal G}}
\newcommand{\cL}{\ensuremath{\mathcal L}} 
\newcommand{\cN}{\ensuremath{\mathcal N}} 
\newcommand{\cP}{\ensuremath{\mathcal P}}
\newcommand{\cS}{\ensuremath{\mathcal S}}
\newcommand{\cZ}{\ensuremath{\mathcal Z}} 
\newcommand{\bbN}{{\ensuremath{\mathbb N}} } 
\newcommand{\bbP}{{\ensuremath{\mathbb P}} } 
\newcommand{\bbR}{{\ensuremath{\mathbb R}} } 
\newcommand{\bbS}{{\ensuremath{\mathbb S}} }
\newcommand{\bbZ}{{\ensuremath{\mathbb Z}} } 
\let\a=\alpha \let\b=\beta   \let\d=\delta  \let\gep=\varepsilon
 \let\g=\gamma \let\h=\eta      
  \let\s=\sigma \let\t=\tau   
\let\D=\Delta   \let\G=\Gamma  \let\L=\Lambda 
\let\O=\Omega      
\def\\{\hfill\break}
\def\thsp{\thinspace}
\def\tthsp{\kern .083333 em}
\def\?{\mskip -10mu}
\def\hexnumber#1{%
  \ifcase#1 0\or 1\or 2\or 3\or 4\or 5\or 6\or 7\or 8\or
  9\or A\or B\or C\or D\or E\or F\fi}
\def\({\left(}
\def\){\right)}
\let\neper=e
\let\ii=i
\def\ie{\hbox{\it i.e.\ }}
\let\id=\identity
\let\sset=\subset
\def\nep#1{ \neper^{#1}}
\def\tc{\thsp | \thsp}
\def\Var{ \mathop{\rm Var}\nolimits }
\def\gap{\mathop{\rm gap}\nolimits}
\def\inte#1{\lfloor #1 \rfloor}
\def\ceil#1{\lceil #1 \rceil}
\begin{document}
\title[]{On the mixing time of the 2D stochastic Ising model
with ``plus'' boundary conditions at low temperature}

\author {Fabio Martinelli} \address{ Dipartimento di Matematica,
  Universit\`a Roma Tre, Largo S.\ Murialdo 1, 00146 Roma, Italia.
  e--mail: {\tt martin@mat.uniroma3.it}} \author {Fabio Lucio
  Toninelli} \address{CNRS and ENS Lyon, Laboratoire de Physique\\
  46 All\'ee d'Italie, 69364 Lyon, France.  e--mail: {\tt
    fabio-lucio.toninelli@ens-lyon.fr}} \thanks{This work was
  supported by the European Research Council through the ``Advanced
  Grant'' PTRELSS 228032, and by ANR through the grants POLINTBIO and LHMSHE}

\begin{abstract}
  We consider the Glauber dynamics for the 2D Ising model in a box of
  side $L$, at inverse temperature $\b$ and random boundary conditions
  $\t$ whose distribution $\bP$ either stochastically dominates the
  extremal plus phase (hence the quotation marks in the title) or is
  stochastically dominated by the extremal minus phase. A particular
  case is when $\bP$ is concentrated on the homogeneous configuration
  identically equal to $+$ (equal to $-$). For $\b $ large enough we
  show that for any $\gep >0$ there exists $c=c(\b,\gep)$ such that
  the corresponding mixing time $T_{\rm mix}$ satisfies
  $\lim_{L\to\infty}\bP\(T_{\rm mix}\ge \exp({cL^\gep})\) =0
%  \nep{-cL^{\gep^2}} 
$. In the non-random case $\t\equiv +$ (or $\tau\equiv -$), this
  implies  that $T_{\rm mix}\le \exp({cL^\gep})$. The same bound holds
  when the boundary conditions are all $+$ on three sides and all $-$
  on the remaining one. The result, although still very far from the
  expected Lifshitz behavior $T_{\rm mix}=O(L^2)$, considerably improves
  upon the previous known estimates of the form $T_{\rm mix}\le \exp({c
    L^{\frac 12 + \gep}})$. The techniques are based on induction
  over length scales, combined with a judicious use of the so-called
  ``censoring inequality'' of Y. Peres and P. Winkler, which in a sense
  allows us to guide the dynamics to its equilibrium measure.
  \\
  \\
  2000 \textit{Mathematics Subject Classification: 60K35, 82C20 }
  \\
  \textit{Keywords: Ising model, Mixing time, Phase coexistence,
    Glauber dynamics.}
\end{abstract}

\maketitle

\thispagestyle{empty}

\section{Introduction, model and main results}
Glauber dynamics for classical spin systems has been extensively
studied in the last fifteen years from various perspectives and across
different areas like mathematical physics, probability theory and
theoretical computer science. A variety of techniques have been
introduced in order to analyze, on an increasing level of
sophistication, the typical time scales of the relaxation process to
the reversible Gibbs measure (see e.g.
\cite{cf:notefabio,cf:Peresetal} and the recent work on the cutoff
phenomenon for the mean field Ising model \cite{cf:Peresmeanfield}).
These techniques have in general proved to be quite successful in the
so-called one-phase region, corresponding to the case where the system
has a unique Gibbs state. When instead the thermodynamic parameters of
the system correspond to a point in the phase coexistence region, a
whole class of new dynamical phenomena appear (coarsening, phase
nucleation, motion of interfaces between different phases,...) whose
mathematical analysis at a microscopic level is still quite far from
being completed.

A good instance of the latter situation is represented by the Glauber
dynamics for the usual $\pm 1$ Ising model at low temperature in the
absence of an external magnetic field (see Section
\ref{sect:Glauber}). When the system is analyzed in a finite box of
side $L$ of the $d$-dimensional lattice $\bbZ^d$ with \emph{free}
boundary conditions, the relaxation to the Gibbs reversible measure
occurs on a time scale exponentially large in the surface $L^{d-1}$
\cite{cf:Thomas,cf:Sugimine2} because of the energy barrier between
the two stable phases of the system (see Section \ref{sect:main
  results} for a more quantitative statement). When instead one of the
two phases is selected by \emph{homogeneous} boundary conditions, e.g.
all pluses, then equilibration is believed to be much faster and it
should occur on a polynomial (in $L$) time scale because of the
shrinking of the big droplets of the opposite phase via motion by mean
curvature under the influence of the boundary conditions.
Unfortunately, establishing the above polynomial law in $\bbZ^d$
remains a kind of holy grail for the subject and the existing bounds
of the form $\exp({c\sqrt{L\log(L)}})$ in $d=2$
\cite{cf:M_2D,Higuchi-Wang} and $\exp({cL^{d-2}\log(L)^2})$ in $d\ge
3$ \cite{cf:Sugimine} are very far from it.

It is worth mentioning that, always for the low-temperature Ising
model but with the underlying graph $G$ different from $\bbZ^d$, it has
been possible to carry out a quite detailed mathematical analysis.
The first example is represented by the regular $d$-ary tree
\cite{cf:Martinelli-Sinclair-Weitz} and the second one by certain
hyperbolic graphs \cite{cf:Bianchi}. In both cases one can show for
example that the \emph{relaxation time} or inverse \emph{spectral gap}
of the Glauber dynamics in a finite ball with all plus boundary
conditions is uniformly bounded from above in the radius of the ball,
a phenomenon that is believed to occur also in $\bbZ^d$ in large
enough ($\ge 4$?) dimension $d$.

Moreover polynomial bounds on the mixing time, sometimes with optimal
results, have been proved for some simplified models of the random
evolution of the phase separation line between the plus and minus
phase for the two-dimensional Ising model (see for instance
\cite{cf:CMT} and \cite{cf:MartSinc}).  The latter contribution, in
particular, partly triggered the present work. There, in fact, the
opportunities offered by the so-called Peres-Winkler \emph{censoring
  inequality} \cite{cf:noteperes} have been detailed in the very
concrete and non-trivial case of the so-called \emph{Solid-on-Solid}
model.

Roughly speaking the censoring inequality (see Section
\ref{sect:censoring}) says that, when considering the Glauber dynamics
for a monotone system like the Ising model on a finite graph and under
certain conditions on the initial distribution, 
switching off (i.e., censoring) the spin flips in some part of the graph
and for a certain amount of time can only increase the variation
distance between the distribution of the chain at the final time $T$
and the equilibrium Gibbs measure. Therefore, if the censored dynamics
is close to equilibrium at a certain time $T$, the same holds for
the true (\ie uncensored) one.

The fact that the choice of where and when to implement the censoring
is completely arbitrary (provided that it is independent of the actual
evolution of the chain) offers the possibility of (sort of) guiding the
dynamics towards the stationary distribution through a sequence of
local equilibrations in suitably chosen subsets of the graph. Of course
the local equilibrium in each of the sub-graphs is conditioned to the
random configuration reached by the dynamics outside it and therefore
one is naturally led to consider the Ising model with \emph{random
  boundary conditions}, a quite delicate topic because of the extreme
sensitivity of the relaxation or mixing time to boundary conditions
(see \cite{cf:Alex,cf:AlexYosh, cf:HY,cf:SY} for several results in this
direction, some of them quite surprising at first sight). Moreover it
should also be clear that, in order for the guidance process to be
successful, the distribution of the random boundary conditions at each
stage of the censoring should be close to that provided by the
stationary Gibbs distribution, a requirement that puts quite severe
restrictions on the choice of the censoring scheduling.

The main contribution of this paper is a detailed implementation of
this program for the two-dimensional, low-temperature, Ising model in
a finite box with either homogeneous, \ie all plus (all minus),
boundary conditions or, more generally, random boundary conditions
that are stochastically larger (stochastically smaller) than those
distributed according to the plus (minus) phase.

In order to state precisely our results we need to define the model,
fix some useful notation and recall some basic facts about the Ising
model below the critical temperature.

\subsection{The standard Ising model}
Let $\L$ be a generic finite subset of $\bbZ^2$.
Each site $x$ in $\L$ indexes a spin $\s_x$ which takes values $\pm
1$.  The spin configurations $\{ \s_x \}_{x \in \L}$ have a
statistical weight determined by the Hamiltonian 
\begin{equation*} H^{\t} ( \s ) =
- \frac12\sum_{x,y \in \L \atop |x-y|=1} \s_x\s_y
- \sum_{x \in \L, y  \in \L^c \atop |x-y|=1}\s_x \t_y \, ,
\end{equation*}
where $\t = \{\t_y \}_{y\in \L^c}$ are boundary conditions
outside $\L$.

The Gibbs measure associated to the spin system with boundary conditions
$\t$ is \begin{equation*} \forall \s \in\O_\L:=\{-1,+1\}^\Lambda,
% \{ \s_x\}_{x \in \L},
  \qquad \pi^{\t}_\L ( \s ) = \frac{1}{Z_{\b,\L}^{\t}}
  \exp \left( - \b H^{\t} ( \s ) \right),
\end{equation*}
where $\b$ is the inverse of the temperature ($\b = \frac{1}{T}$) and
$Z_{\b,\L}^{\t}$ is the partition function.  If the boundary
conditions are uniformly equal to $+1$ (resp. $-1$), then the Gibbs measure will
be denoted by $\pi_\L^+$ (resp. $\pi_\L^-$). 
If instead the boundary
conditions are free (i.e.  $\t_y=0 \ \forall y$) then the Gibbs
measure will be denoted by $\pi_\L^f$. 
\begin{remark}
 Sometimes we will drop the superscript $\t$ and
  the subscript $\L$ from the notation of the Gibbs measure.
\end{remark}
It is useful to recall a \emph{monotonicity property} of the Gibbs
measure that will play a key role in our analysis.  One introduces a
partial order on $\O_\L$ by saying that $\s \le \h$ if $\s_x\le\h_x$
for all $x\in \L$.  A function $f:\O_\L\mapsto\mathbb R$ is called
{\it monotone increasing (decreasing)\/} if $\s\le\h$ implies
$f(\s)\le f(\h)$ ($f(\s)\ge f(\h)$).  An event is called {\it
  increasing} ({\it decreasing}) if its characteristic function is
increasing (decreasing).  Given two probability measures $\mu$, $\nu$
on $\O_\L$ we write $\mu \preceq \nu$ if $\mu(f) \le\nu(f)$ for all
increasing functions $f$ (with $\mu(f)$ we denote the expectation of
$f$ with respect to $\mu$).  In the following we will take advantage
of the FKG inequalities \cite{cf:FKG} which state that
\begin{itemize}
\item if $\t\le\t'$, then $\pi_\L^\t \preceq \pi_\L^{\t'}$
\item if $f$ and $g$ are increasing then
   $\pi_\L^\t(f g) \ge \pi_\L^\t(f) \pi_\L^\t(g)$.
\end{itemize}

The phase transition regime occurs at low
temperature and it is characterized by spontaneous magnetization
in the thermodynamic limit.
There is a critical value $\b_c$ such that
\begin{equation}
\label{magnetization}
\forall \b > \b_c, \qquad
\lim_{\L\to \bbZ^2} \pi_\L^+ (\s_0) = -
\lim_{\L\to \bbZ^2} \pi_\L^- (\s_0) = m_\b >0 \, . 
\end{equation} 
Furthermore, in the thermodynamic limit the measures $\pi_\L^+$ and
$\pi_\L^-$ converge (weakly) to two distinct Gibbs measures $\pi^+_\infty$
and $\pi^-_\infty$ 
which are measures on the space $\Omega_{\Z^2}=\{ - 1,+1\}^{\bbZ^2}$.  Each of these
measures represents a pure state.

The next step is to quantify the coexistence of the two pure states
defined above. Let $\L_L= \{-\lfloor L/2\rfloor , \dots ,\lfloor
L/2\rfloor \}^2$, let $\vec{n}$ be a vector in the unit circle $\bbS$ 
and $\phi_{\vec n}$ the angle it forms with $\vec{e}_1=(1,0)$
%such that
%$\vec{n} \cdot \vec{e}_1 >0$ 
and finally let $\t$ be the following mixed
boundary conditions
\begin{equation*} 
\forall y \in \L_L^c, \qquad \t_y  =
\begin{cases}
+1, & \qquad \text{if} \quad  \vec{n} \cdot y \geq 0, \\
-1, & \qquad \text{if} \quad  \vec{n} \cdot y <  0 . \end{cases}
\end{equation*} 
The partition function with mixed boundary conditions
is denoted by $Z_{\b, L}^{\pm} (\vec{n})$ and the one with boundary
conditions uniformly equal to $+1$ 
by $Z_{\b, L}^+$.
\begin{definition}
The surface tension in the direction orthogonal to $\vec{n} \in \bbS$
%$\vec{n} \cdot \vec{e}_1 >0$, 
is an even and  periodic function of $\phi_{\vec n}$ of period $\pi/2$, and for
$-\pi/4\le \phi_{\vec n}\le \pi/4$ it
is defined by
\begin{equation}
\label{surfacetension}
\tau_\b(\vec{n}) =  \lim_{L\to \infty} \, - \frac{\cos(\phi_{\vec n})}{\beta L}
\,
\log \frac{Z_{\b, L}^{\pm} (\vec{n})}{Z_{\b, L}^+} \,
. \end{equation} 
\end{definition}
We refer to \cite{cf:MMR} for a general derivation of the
thermodynamic limit \eqref{surfacetension}.  With this definition, one
result (among many others) concerning the coexistence of the two
phases can be formulated as follows \cite{cf:Shlosman}.  Let
$m_{\L_L}(\s)=\sum_{x\in \L_L}\s_x$ be the total magnetization in the
box $\L_L$. Then
\begin{equation}
\label{eq:shlos}
\lim_{L\to \infty}-\frac{1}{L} \log\(\pi_{\L_L}^f(\inte{m_{\L_L}/2}=0)\)=\tau_\b 
\end{equation}
where $\t_\b$ is the surface tension in the horizontal direction $\vec
e_1$.

\subsection{ The Glauber dynamics}
\label{sect:Glauber}
The stochastic dynamics we want to study, sometimes referred to as the
\emph{heat-bath dynamics}, is a continuous time Markov chain on
$\O_\L$, reversible w.r.t. the measure $\pi_\L^\t$, that
can be described as follows. With rate one and for each vertex $x$,
the spin $\s_x$ is refreshed by sampling a new value from the set
$\{-1,+1\}$ according to the conditional Gibbs measure
$\pi_x:=\pi_\L^\t(\cdot \tc \s_y,\, y\neq x)$. It is easy to check
that the heat-bath chain is characterized by the generator
 \begin{equation}
    \label{eq:1}
(\cL_\L^{\t} f)(\s) = \sum_{x\in \L} \left[ \pi_{x} (f) - f(\s)
  \right]
 \end{equation}
where $\pi_{x}(f)$ denotes the average of $f$ with respect to  the conditional Gibbs measure $\pi_x$, which acts only on 
%$\pi_\L^\t$-conditional variance w.r.t 
the variable $\s_x$.
The Dirichlet form associated to $\cL_\L^{\t}$ takes the form 
$$
\cE_\L^{\t}(f,f) = \sum_{x\in \L} \pi^{\t}_\L\bigl(\, \Var_x(f)\,\bigr)
$$
where $\Var_x(f)$ denotes the variance with respect to 
$ \pi_{x}$.

We will always denote by $\mu_t^\s$ the distribution of the chain at
time $t$ when the starting point is $\s$. If $\s$ is either
identically equal to $+1$ or $-1$ then we simply write $\mu_t^+$ or
$\mu_t^-$. The boundary conditions $\t$ are usually not explicitly
spelled out for lightness of  notation. Sometimes we  write
$\mu^\si_{\Lambda,t}$ when we wish to emphasize that we are looking at
the evolution for a system enclosed in the domain $\Lambda$.
 
The Glauber dynamics with the heat-bath updating rule satisfies a
particularly useful monotonicity property. It is possible to construct
on the same probability space (the one built from the independent
Poisson clocks attached to each vertex and from the independent coin
tosses associated to each ring) a Markov chain $\{\h_t^{\s,\t}\}_{t\ge
  0}$, $(\s,\t)\in \O_\L\times \O_{\L^c}$, such that
\begin{itemize}
\item for each $\t\in \O_{\L^c}$ and $ \s\in \O_\L$ the coordinate process
  $(\h_t^{\s,\t})_{t\ge 0}$ is a version of the Glauber chain started from $\s$
with
  boundary conditions $\t$; \item for any $t\ge 0$,
  $\h_t^{\s,\t}\le \h_t^{\s',\t'}$ whenever $\s\le \s'$ and
  $\t\le\t'$.

\end{itemize}

It is possible to extend the above definition of the generator $\cL_\L^{\t}$ directly to the whole lattice $\bbZ^2$ and get a well defined
  Markov process on $\O_{\bbZ^2}$ (see e.g. \cite{cf:Liggett}). The latter will be referred
  to as the infinite volume Glauber dynamics, with generator denoted by $\cL$.

  Two key quantities measure the speed of relaxation to equilibrium of
  the Glauber dynamics. The first one is the \emph{relaxation time}
  $T_{\rm relax}$.
\begin{definition}
  $T_{\rm relax}$ is the best constant $C$ in the Poincar\'e inequality
\begin{equation}
  \label{eq:3}
  \Var_\L^\t(f):=\Var_{\pi_\Lambda^\tau}(f)\le C \cE_\L^\t(f,f),\qquad \forall \; f:\O_\L\mapsto \bbR.
\end{equation}
\end{definition}
In particular, for any $f:\O_\L\mapsto \bbR$, it follows that
\begin{equation}
  \label{eq:5}
 \Var_\L^\t\(\nep{t\cL_\L^\t}f\)^{1/2}\le \nep{-t/T_{\rm relax}}\Var_\L^\t\(f\)^{1/2}.
\end{equation}
We will write $\gap:=\gap^\tau_\L$ for the inverse of $T_{\rm relax}$.

\medskip

Another relevant quantity is the \emph{mixing time} which is defined as follows.
Recall that the total variation distance between two measures $\mu,\nu$ on a finite probability space $\O$ is defined as
\begin{eqnarray}
  \|\mu-\nu\|:=\frac12\sum_{\si\in\O}|\mu(\si)-\nu(\si)|.
\end{eqnarray}
\begin{definition}
For any $\epsilon\in (0,1)$, we define 
\begin{equation}
  \label{eq:4}
  T_{\rm mix}(\epsilon):=\inf\{t>0: \sup_\s \|\mu_t^\s-\pi_\L^\t\|\le \epsilon\}.
\end{equation}
When $\epsilon=1/(2e)$ we will simply write $T_{\rm mix}$.
\end{definition}
With this definition it follows in particular that (see e.g.
\cite{cf:Peresetal})
\begin{equation}
\label{eq:*}
  \sup_\s \|\mu_t^\s-\pi_\L^\t\|\le \(2\epsilon\)^{\inte{t/T_{\rm mix}(\epsilon)}} \quad \forall t\ge 0.
\end{equation}
As it is well known (see e.g. \cite{cf:Peresetal}) the following bounds
between $T_{\rm relax}$ and $T_{\rm mix}$ hold:
\begin{equation}
  \label{eq:6}
  T_{\rm relax}\le T_{\rm mix}\le \log\(\frac{2e}{ \pi^*}\)
T_{\rm relax}
\end{equation}
where $\pi^*=\min_\s \pi_\L^\t(\s)$. Notice that $\pi^*\ge \nep{-c|\L|}$ for some
constant $c=c(\beta)$ and therefore the two quantities differ at most by
const$\times$volume.

Another definition we will often need is the following:
\begin{definition}
  Let $\mu,\nu$ be measures on $\O_\L$, let $\si\in\O_L$ and $V\subset
  \L$. Then, $\|\mu-\nu\|_V$ denotes the variation distance
  between the marginals of $\mu$ and $\nu$ on $\Omega_V$, and 
$\si_V$ the restriction of $\si$ to $V$.
\end{definition}

\subsection{Main results}
\label{sect:main results}
Our main result considerably improves upon the existing \emph{upper
  bound} on the mixing time (and therefore also on the relaxation
time) when $\L$ is a square box and the boundary conditions $\t$ are
homogeneous \ie either all plus or all minus. As a by-product we also
get a new bound on the time auto-correlation function of, e.g., the
spin at the origin for the infinite volume Glauber dynamics started
from the plus phase $\pi^+_\infty$.  Before stating the results we
quickly review what was known so far. In what follows $\L_L$ will
always be a $L\times L$ box.

When the boundary conditions are free, a simple bottleneck argument proves that  
\begin{equation*}
  T_{\rm relax} \ge \frac{1}{L^2}\( \pi_{\L_L}^f(\inte{m_{\L_L}/2}=0)\)^{-1}
\end{equation*}
so that (recall \eqref{eq:shlos}) 
\begin{equation*}
  \lim_{L\to \infty} \frac 1L \log(T_{\rm relax})\ge \t_\b.
\end{equation*}
In \cite{cf:M_2D} such a result was improved to an \emph{equality} for large enough values of $\b$ and in \cite{cf:CGMS} for any $\b>\b_c$.

Quite different is the situation for homogeneous boundary conditions, e.g. all plus, for
which the bottleneck between the two phases is removed by the boundary
conditions and the relaxation process should occur on a much shorter
time scale. In this case one expects a polynomial growth of both
$T_{\rm relax}$ and $T_{\rm mix}$ of the form
\begin{equation*}
  T_{\rm relax}\approx L,\qquad T_{\rm mix} \approx L^2.  
\end{equation*}
The reason behind the difference in the power of $L$ of the two
growths seems to be quite subtle and largely not yet understood at the
mathematical level.  The only rigorous results in this direction are
those obtained in \cite{Bodineau-Martinelli} where, apart from
logarithmic corrections, the appropriate lower bounds on $T_{\rm
  relax}$ and $ T_{\rm mix}$ have been established by means of quite subtle
test functions combined with the whole machinery of the Wulff
construction.

As far as upper bounds are concerned, they proved to be quite hard to
obtain and the available results are still quite poor.  In the case of
homogeneous boundary conditions it was first shown in \cite{cf:M_2D}
that, for $\b$ large enough and any $\gep>0$,
\begin{equation*}
  T_{\rm relax} \le \exp\left({c L^{1/2 + \gep}}\right)
\end{equation*}
for a suitable constant $c$ depending on $\gep$ and $\beta$. Later such a bound
was improved to $\exp({c\sqrt{L\log L} })$ in \cite{Higuchi-Wang}. When
the inverse temperature $\b$ is just above the critical value, the
only available result is much weaker (see \cite{cf:CGMS}) and of the
form
\begin{equation*}
  \lim_{L\to \infty}\frac 1L \log(T_{\rm relax})= 0.
\end{equation*}
Finally when $f(\s)=\s_0$ the above bounds combined with some simple
monotonicity arguments prove that, for any $\a>0$,
\begin{equation*}
  \Var^+_\infty\(\nep{t\cL}f\)\le c/t^\a
\end{equation*}
(where $\Var^+_\infty$ denotes the variance w.r.t. the plus phase
$\pi^+_\infty$) while the expected behavior is $O(\nep{-\sqrt{t}})$,
see \cite{cf:Fisher-Huse}.

We are now in a position to state our main results.

\begin{Theorem}
\label{th:quadrato}
Let $\beta$ be large enough and let $L$ belong to the sequence $\{2^n-1\}_{n\in\N}$. 
\begin{enumerate}
\item If the boundary conditions (b.c.) $\tau$ are sampled from a law $\bP$ which
either stochastically dominates the pure phase $\pi^+_\infty$ or is stochastically dominated by $\pi^-_\infty$ (see Section \ref{sect:b.c}), there exists
$c=c(\beta,\gep)$ (independent of $\bP$) such that 
\begin{eqnarray}
  \label{eq:maintmix00}
  \bE \|\mu^\pm_{t_L}-\pi^\tau\|\le \exp\left(-c L^{\gep^2/16}\right),
\end{eqnarray}
where $t_L=\exp(c L^\gep)$.
In particular,
\begin{eqnarray}
\label{eq:maintmix0}
  \bP\left(T_{\rm mix}\ge t_L\right)\le 
\exp\left(-c L^{\gep^2/16}\right).
\end{eqnarray}
%The constant $c$ does not depend on $\bP$.

\item The estimates \eqref{eq:maintmix00}-\eqref{eq:maintmix0} hold also if $\bP$ is stochastically dominated by $\pi^-_\infty$ on 
one side of $\L_L$, and stochastically dominates $\pi^+_\infty$ on the union of the other three sides. Similarly if the role of $+$ and $-$ is reversed.
\end{enumerate}

\end{Theorem}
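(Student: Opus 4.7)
The plan is induction on the length scale $L_n=2^n-1$, with the Peres--Winkler censoring inequality bridging the scales. The right inductive statement is the one in \eqref{eq:maintmix00} itself: for any distribution $\bP$ of boundary conditions on $\Lambda_{L_n}$ that stochastically dominates $\pi^+_\infty$, the averaged total variation distance between $\mu^+_{t_{L_n}}$ and $\pi^\t$ is at most $\exp(-cL_n^{\gep^2/16})$, where $t_{L_n}=\exp(cL_n^\gep)$. This ``random boundary'' formulation is essential because the sub-dynamics that appear during the inductive step see boundary data created by the chain itself, not the homogeneous $+$ state, and only an inductive statement of this robust form can feed back into itself.

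At scale $L_{n+1}$, I would design a censoring schedule as follows. Split $\Lambda_{L_{n+1}}$ into two overlapping rectangles $R_1,R_2$ of side $L_n$, say the left and right halves sharing a thin overlap, and perform successive sweeps: in each sweep censor the spin flips outside $R_1$ for a window of length $t_{L_n}$, then censor those outside $R_2$ for another such window. Starting from the configuration identically $+$, which stochastically dominates $\pi^\t$ so that Peres--Winkler applies, the censoring inequality reduces the problem to bounding the TV distance of this censored chain. Inside the active rectangle $R_i$, monotonicity together with FKG guarantees that the configuration on $\partial R_i$ generated by the chain still stochastically dominates $\pi^+_\infty$, so the inductive hypothesis applies and yields local equilibration within $\exp(-cL_n^{\gep^2/16})$ of the corresponding conditional Gibbs measure. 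A standard two-block iteration, where the plus-dominated Gibbs measures on $R_1,R_2$ together reconstruct the global Gibbs measure on $\Lambda_{L_{n+1}}$ via a Dobrushin--Shlosman-type mixing estimate on the overlap, then shows that after $K$ sweeps the chain is close to $\pi^\t$. The budget $2K\,t_{L_n}\le t_{L_{n+1}}=\exp(cL_{n+1}^\gep)$ leaves super-polynomial room since $L_{n+1}^\gep - L_n^\gep\gtrsim L_n^\gep$, so $K$ may be taken polynomial in $L_{n+1}$.

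For part~(2), with three $+$ sides and one $-$ side, the extra difficulty is that the $-$ side could in principle support a macroscopic minus droplet attached to it. Here I would establish a separate geometric input: by low-temperature contour expansion and the surface tension estimates \eqref{surfacetension}--\eqref{eq:shlos}, the equilibrium measure with these mixed boundary conditions is plus-dominated outside a thin layer of width at most $L^{\gep/2}$ along the minus side. One then runs a preliminary censored window that updates only inside that strip for a time $\exp(O(L^{\gep/2}))$, equilibrating the strip, and afterwards the effective boundary data seen by the complementary rectangle stochastically dominates $\pi^+_\infty$, reducing the problem to part~(1).

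The main obstacle, in my view, is propagating stochastic domination through the iteration in a quantitative way. After each censored window the marginal inside $R_i$ is only $\exp(-cL_n^{\gep^2/16})$-close to its plus-dominated target, not exactly so; one must verify that $K$ such small perturbations neither accumulate into a genuine loss of the stochastic-domination hypothesis needed in the next step, nor spoil the final $\exp(-cL_{n+1}^{\gep^2/16})$ bound. This forces the error at scale $L_n$ to be much smaller than $1/K$, and is the structural reason why one loses a factor of order $\gep$ in the exponent at each iteration, ending up with $\gep^2/16$ in the final bound rather than $\gep$. Controlling this via a careful coupling argument at the level of distributions of the interface produced in the overlap region, rather than at the level of single configurations, will be the technical heart of the proof.
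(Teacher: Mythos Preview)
Your high-level architecture (induction on scales, Peres--Winkler censoring, carrying a ``random boundary'' hypothesis through the recursion) matches the paper's, but the concrete geometry you propose does not work, and this is a genuine gap rather than a detail.

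You suggest splitting $\Lambda_{L_{n+1}}$ into two overlapping sub-squares of side $L_n$ and invoking a ``Dobrushin--Shlosman-type mixing estimate on the overlap.'' That mixing estimate is exactly the missing ingredient, and it is \emph{not} available in this form. The equilibrium input one actually needs is a statement like: with $+$ on the South side and $-$-like b.c.\ on the other three, the open Peierls contour in a rectangle of base $L$ stays below height $O(L^{1/2+\gep})$ except with probability $\exp(-cL^{2\gep})$. This is true precisely because the interface behaves like a random walk bridge with $O(\sqrt L)$ fluctuations. It tells you nothing useful about a square split into left and right halves with a thin vertical overlap: there the conditional measures on $R_1,R_2$ do \emph{not} reconstruct $\pi^\tau$ after a bounded number of sweeps, because the relevant ``overlap mixing'' fails inside the phase-coexistence region.

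The paper's fix is to change the inductive object: the recursion is run not on squares but on \emph{thin rectangles} $R_L$ of dimensions $L\times L^{1/2+\gep}$ (and a slightly taller variant $Q_L$), with b.c.\ dominated by $\pi^-_\infty$ on three sides and dominating $\pi^+_\infty$ on the South side. Two such rectangles stacked vertically give $Q_L$; two $Q_L$'s placed side by side (with a short $\Delta$ of $-$ spins pinned on the South boundary between them) give $R_{2L+1}$. The interface estimate above is exactly what makes each gluing step work in \emph{one} censoring pass per sub-block, with a resetting-to-$\pm$ trick replacing your $K$ sweeps. Only \emph{after} this recursion is complete does one build the square $\Lambda_L$ by stacking $\sim L^{1/2-\gep}$ thin rectangles vertically (this is Lemma~\ref{th:lemmaindutt}). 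In particular the logical order is the reverse of yours: the mixed-boundary case (your part~(2)) is what the thin-rectangle recursion delivers directly, and the homogeneous-boundary case is then obtained from it by one more monotonicity argument.

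Your final paragraph correctly identifies error propagation as delicate, but your diagnosis of the $\gep^2/16$ exponent is off: it does not come from losing a factor of~$\gep$ at each scale. The recursion on thin rectangles (Corollary~\ref{th:corolla}) yields $\exp(-cL^{\gep^2})$ because the iteration is \emph{seeded} at scale $L^\gep$ (where the trivial bound $T_{\rm mix}\le e^{cL^\gep}$ suffices) and the equilibrium errors are $\exp(-cL_j^{2\gep})$. The extra factor $1/16$ then appears once, when one passes from thin rectangles with parameter $\gep'=\gep/4$ to the full square.
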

The most natural consequence of the above result is
\begin{Corollary}
Let $\b$ be large enough and let $L$ belong to the sequence $\{2^n-1\}_{n\in\N}$. Consider the square $\Lambda_L$ with b.c. $\tau\equiv +$. For every
$\gep>0$ there exists $c=c(\beta,\gep)<\infty$ such that
\begin{eqnarray}
\label{eq:maintmix}
  T_{\rm mix} \le e^{c L^{\gep}}.
\end{eqnarray}
The same bound holds if the boundary conditions are $+$ on three sides
and $-$ on the remaining one. Similarly if $+$ is replaced by $-$.
\end{Corollary}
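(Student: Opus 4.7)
The plan is to derive the corollary directly from Theorem~\ref{th:quadrato} by taking the boundary-condition law $\bP$ to be a Dirac mass concentrated on the specific deterministic configuration in question.

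First I would observe that for $\tau\equiv +$ the law $\bP := \delta_{+}$ vacuously stochastically dominates $\pi^{+}_\infty$. Indeed the all-plus configuration is the unique maximal element of $\Omega_{\L_L^c}$ in the partial order used throughout the paper, so every probability measure on $\Omega_{\L_L^c}$ is stochastically dominated by $\delta_{+}$; in particular $\delta_{+}\succeq \pi^+_\infty\!\restriction_{\L_L^c}$. This places us under the first hypothesis of Theorem~\ref{th:quadrato}(1).

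Applying that theorem gives
\[
\bP\bigl(T_{\rm mix}\ge t_L\bigr) \;\le\; \exp\bigl(-c L^{\gep^{2}/16}\bigr),
\qquad t_L = e^{c L^{\gep}}.
\]
Because $\bP$ is a Dirac mass the event $\{T_{\rm mix}\ge t_L\}$ has probability either $0$ or $1$, so for all $L$ large enough (where the right-hand side is strictly less than $1$) we must have $T_{\rm mix}<t_L$, which is exactly \eqref{eq:maintmix}. The finitely many small values of $L$ in the admissible sequence $\{2^n-1\}$ present no difficulty: $T_{\rm mix}$ is always finite (bounded e.g.\ by $\exp(c'|\L_L|)$ via \eqref{eq:6}), and the constant $c$ in \eqref{eq:maintmix} can be enlarged to absorb them.

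For the $+/-$ mixed boundary condition with $-$ on one side and $+$ on the other three I would repeat the argument using part~(2) of Theorem~\ref{th:quadrato}: the deterministic configuration is trivially dominated by $\pi^{-}_\infty$ on the minus side (being the minimal element there) and dominates $\pi^{+}_\infty$ on the three plus sides, so the hypothesis is again met. The case in which the roles of $+$ and $-$ are swapped follows either from the same argument or directly from the global spin-flip symmetry of the zero-field Ising Hamiltonian. I do not anticipate any substantive obstacle: the corollary is essentially a specialization of Theorem~\ref{th:quadrato} to degenerate boundary laws, with only these trivial stochastic dominations to check; all the genuine work, and hence all the potential difficulty, is contained in the proof of Theorem~\ref{th:quadrato} itself.
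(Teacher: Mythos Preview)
Your argument is correct and is exactly the intended one: the paper presents this corollary as ``the most natural consequence'' of Theorem~\ref{th:quadrato} without writing out a separate proof, and your specialization of $\bP$ to the Dirac mass together with the $0$--$1$ observation is precisely how the deduction goes. There is nothing to add.
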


\begin{remark}\ 

  {\bf (i)} In the proof of Theorem \ref{th:quadrato} and of Corollary
  \ref{main2} below, we need at some point some key equilibrium estimates
  which are proved in the appendix via standard cluster expansion
  techniques for values of $\beta$ large enough. However, we expect
  those bounds to hold for every $\beta>\beta_c$.  Since this is the
  only part of the proof where the value of $\b$ comes into play, we
  expect Theorem \ref{th:quadrato} and Corollary \ref{main2} to hold
  for any $\b>\b_c$.  Let us also point out that, while we restrict
  for simplicity to the nearest-neighbor Ising model, we believe that
  our techniques can be generalized without conceptual difficulties to
  ferromagnetic Ising models with finite-range interactions. In
  particular, cluster expansion results for large $\beta$ are known to
  hold also in this more general situation.

{\bf (ii)} The restriction that $L$ belongs to the sequence $\{2^n-1\}_{n\in\N}$ is purely technical and it is a consequence of
the iterative procedure we use. It would not be difficult to 
eliminate this restriction by somewhat modifying our iteration below (see Remark \ref{scale} at the end of the proof of Theorem \ref{th:rec}),
but we have decided not to do this, in order to keep the presentation
as simple as possible.

{\bf (iii)} The above results have been stated for the heat-bath
dynamics but they actually apply to any other single site Glauber
dynamics (e.g. the Metropolis chain) with jump rates uniformly
positive (e.g. greater than $\d>0$) as can be seen via standard
comparison techniques \cite{cf:notefabio}. More precisely, if $\hat
T_{\rm mix}$ and $\hat T_{\rm relax}$ denote the mixing and relaxation
times of the new chain, then there exist constants
$c,c'$ depending on $\d,\b$ such that $\hat T_{\rm mix}\le c|\L| \hat T_{\rm relax}
\le c' |\L| T_{\rm relax}\le c'|\L| T_{\rm mix}$; the  results we are after then
follow since  $|\L|$ represents a polynomial correction which is irrelevant
in our case.

{\bf (iv)} Notice that in some sense our result \eqref{eq:maintmix0} is not so far from optimality. Indeed, consider
the distribution $\bP$ such that $\tau=+$ except for the boundary sites which are at distance at most $L^\gep$ from one of the corners
of the box, where $\tau$ is sampled from $\pi^+_\infty$. Clearly $\bP$ stochastically dominates 
$\pi^+_\infty$. Then, with $\bP$-probability $\exp(-c L^{\gep})$, $\tau=-$ around the corners and, thanks to the results of
\cite{cf:Alex}, $T_{mix}\ge \exp(c L^\gep)$.
\end{remark}
\subsection{Applications}

It is intuitive that if the b.c. are all $+$  (all $-$) and we start from the 
all $+$ (all $-$) configuration, equilibration will be much quicker.
Indeed, we have the following
\begin{Corollary}
\label{th:cor++}
Let $\beta$ be large enough and $\tau\equiv +$. For every $\gep>0$
there exists $c=c(\beta,\gep)>0$ such that
\begin{eqnarray}
\label{eq:cor++}
\lim_{L\to\infty} \|\mu^+_{t_1}-\pi^\tau\|=0,
\end{eqnarray}
where $t_1:=\exp(c(\log L)^\gep)$ . 
By a global spin flip the same results holds if $+$ is replaced by $-$.
\end{Corollary}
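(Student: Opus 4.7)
Since $\tau\equiv+$ and we start from the maximal configuration $\s\equiv+$, the law $\mu^+_t$ stochastically dominates $\pi^+:=\pi^+_{\L_L}$ for every $t\geq0$. Via the monotone grand coupling with a stationary chain driven by the same Poisson clocks and coin tosses one obtains the standard bound
\begin{equation*}
\|\mu^+_t-\pi^+\|\;\leq\;\sum_{x\in\L_L}\bigl[\mu^+_t(\s_x=+1)-\pi^+(\s_x=+1)\bigr],
\end{equation*}
so it will be enough to bound each single-site discrepancy by $L^{-(2+\a)}$ for some $\a>0$. The plan is to combine the Peres--Winkler censoring inequality on a logarithmic length scale with the exponential decay of correlations for the plus phase at low temperature.

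Fix $\ell=C\log L$, rounded to the closest integer of the form $2^n-1$, where $C$ is a large constant to be chosen at the end. For each $x\in\L_L$ pick an $\ell\times\ell$ square $B_x\sset\L_L$ containing $x$ and with $d(x,\partial^i B_x)\geq\ell/4$, where $\partial^i B_x:=\partial B_x\setminus\partial\L_L$ is the ``internal'' part of the boundary (centered at $x$ in the bulk, pushed against $\partial\L_L$ near an edge or a corner of $\L_L$). Censor every clock ring at a site outside $B_x$: since the starting configuration is $+$, the configuration outside $B_x$ stays identically $+$ at all times, and the censored dynamics inside $B_x$ is simply the Glauber chain with all-$+$ boundary conditions started from $+$. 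Since $\1_{\{\s_x=+1\}}$ is increasing, the Peres--Winkler inequality yields
\begin{equation*}
\mu^+_t(\s_x=+1)\;\leq\;\mu^{+,{\rm cens}}_t(\s_x=+1).
\end{equation*}
Theorem \ref{th:quadrato} applied inside $B_x$ gives $T_{\rm mix}(B_x)\leq\exp(c\ell^\gep)$, and the submultiplicative bound \eqref{eq:*} allows to iterate, so that
\begin{equation*}
\bigl|\mu^{+,{\rm cens}}_t(\s_x=+1)-\pi^+_{B_x}(\s_x=+1)\bigr|\;\leq\;L^{-10}\qquad\text{as soon as}\quad t\geq 10(\log L)\,\exp(c\ell^\gep).
\end{equation*}
With $\ell=C\log L$ the right-hand side is bounded by $\exp(c''(\log L)^\gep)=:t_1$ for a suitable $c''=c''(\b,\gep,C)$, which is exactly the target time scale.

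By FKG the remaining gap $\pi^+_{B_x}(\s_x=+1)-\pi^+_{\L_L}(\s_x=+1)$ is nonnegative, because the deterministic $+$ boundary of $B_x$ on $\partial^i B_x$ dominates the random boundary sampled there from $\pi^+_{\L_L}$. For $\b$ large, a standard cluster expansion — of exactly the kind invoked in the appendix of this paper — shows that the influence of $\partial^i B_x$ on a local observable at $x$ decays exponentially:
\begin{equation*}
\pi^+_{B_x}(\s_x=+1)-\pi^+_{\L_L}(\s_x=+1)\;\leq\;C_1\nep{-d(x,\partial^i B_x)/\xi_\b}\;\leq\;C_1 L^{-C/(4\xi_\b)},
\end{equation*}
where $\xi_\b<\infty$ is the correlation length of the plus phase. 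Summing the two contributions over $x\in\L_L$,
\begin{equation*}
\|\mu^+_{t_1}-\pi^+\|\;\leq\;L^2\bigl(L^{-10}+C_1 L^{-C/(4\xi_\b)}\bigr)\;\xrightarrow[L\to\infty]{}\;0
\end{equation*}
as soon as $C>10\xi_\b$, proving the corollary. The main technical point is the uniform-in-$\L_L$ correlation-decay estimate above: it is morally obvious from the Peierls picture, but the clean quantitative version relies on cluster expansion and is the sole reason why the hypothesis ``$\b$ large'' enters, consistently with Remark~(i) following Theorem \ref{th:quadrato}.
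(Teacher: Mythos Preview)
Your proof is correct and follows essentially the same route as the paper's: bound the total variation by a sum of single-site discrepancies via monotonicity, localize each site to a box of logarithmic side, apply the main mixing-time bound (Theorem~\ref{th:quadrato}) inside the small box, and control the remaining equilibrium gap by exponential decay in the pure phase. The paper phrases this by restarting from inequality~\eqref{eq:daboot} with $\tau\equiv -$ and $K_\ell=\L_L\cap\{\text{square of side }2\ell+1\}$, while you work directly with a full $\ell\times\ell$ square $B_x\subset\L_L$; this is a cosmetic difference since $\ell\ll L$.
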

Finally, here is the result about the decay of time auto-correlations for
the infinite-volume dynamics in a pure phase:
\begin{Corollary}
\label{main2}
Let $\b$ be large, let $f(\s) =\s_0$ and let $\rho(t)\equiv
\Var^+_\infty\(\nep{t\cL}f\)$ be the time auto-correlation of the spin
at the origin in the plus phase $\pi^+_\infty$. Then for any $\gep >0$
there exists a constant $c=c(\b,\gep)$ such that
\begin{equation}
\label{eq:decorrel}
\rho(t)\le c \,\nep{-(1/c)(\log t)^{1/\gep}} .
\end{equation}
\end{Corollary}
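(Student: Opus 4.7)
The plan is to reduce $\rho(t)=\Var^+_\infty(e^{t\cL}\sigma_0)$ to a finite-volume estimate on a box $\Lambda_L$ with $+$ boundary conditions, where Theorem~\ref{th:quadrato} applies, and then to balance $L=L(t)$. By the Poincar\'e inequality \eqref{eq:5} applied to the heat-bath dynamics in $\Lambda_L$ with $+$ b.c., together with $T_{\rm relax}\le T_{\rm mix}\le \exp(cL^\gep)$ (from Theorem~\ref{th:quadrato} and \eqref{eq:6}),
\begin{equation*}
  \Var^+_{\Lambda_L}\(e^{t\cL^{+}_{\Lambda_L}}\sigma_0\)\;\le\; \nep{-2t\,\gap^+_{\Lambda_L}}\;\le\; \exp\(-2t\,\nep{-cL^\gep}\).
\end{equation*}

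To transfer this to $\rho(t)$, introduce $\tilde g_L(\sigma):=(e^{t\cL^{+}_{\Lambda_L}}\sigma_0)(\sigma|_{\Lambda_L})$ and couple the infinite-volume dynamics with the $+$-b.c.\ dynamics in $\Lambda_L$ via the same graphical construction, started from the same $\sigma$. Attractiveness gives the pointwise bound $\tilde g_L(\sigma)\ge (e^{t\cL}\sigma_0)(\sigma)$, since the $+$ b.c.\ on $\partial\Lambda_L$ dominate any infinite-volume configuration on $\Lambda_L^c$. Stationarity of $\pi^+_\infty$ yields $\pi^+_\infty(e^{t\cL}\sigma_0)=m_\beta$, while FKG ($\pi^+_\infty|_{\Lambda_L}\preceq \pi^+_{\Lambda_L}$), stationarity of $\pi^+_{\Lambda_L}$ under the $+$-b.c.\ dynamics, and the standard low-temperature cluster-expansion estimate $\pi^+_{\Lambda_L}(\sigma_0)=m_\beta+O(\nep{-c'L})$ give
\begin{equation*}
0\;\le\; \pi^+_\infty(\tilde g_L-e^{t\cL}\sigma_0)\;=\;\pi^+_\infty(\tilde g_L)-m_\beta\;=\;O(\nep{-c'L}).
\end{equation*}
Because the nonnegative integrand is bounded by~$2$, the same estimate controls its $L^2(\pi^+_\infty)$-norm, and a parallel FKG/monotonicity argument comparing the measures $\pi^+_\infty|_{\Lambda_L}$ and $\pi^+_{\Lambda_L}$ on the monotone function $g_L=e^{t\cL^{+}_{\Lambda_L}}\sigma_0$ (decomposing $g_L-m_\beta$ into its positive and negative parts, each monotone in $\sigma$) yields
\begin{equation*}
\rho(t)\;\le\; \Var^+_{\Lambda_L}\(e^{t\cL^{+}_{\Lambda_L}}\sigma_0\)+O(\nep{-c'L}).
\end{equation*}

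Combining the two displays, $\rho(t)\le \exp(-2t\,\nep{-cL^\gep})+O(\nep{-c'L})$. Choosing $L\in\{2^n-1\}_{n\in\N}$ with $L\asymp A(\log t)^{1/\gep}$ and $A=A(\beta,\gep)$ small enough that $cA^\gep<1/2$, the first term is $O(\nep{-\sqrt t})$ while the second gives $O(\nep{-c'A(\log t)^{1/\gep}})$, proving \eqref{eq:decorrel}. The main technical obstacle is the comparison of the two variances in the approximation step: because $g_L$ depends non-trivially on every spin of $\Lambda_L$ for large $t$, it is not a local observable and a naive quotation of exponential decay of correlations does not apply; the remedy is to exploit the monotonicity of $g_L$ (inherited from the attractiveness of the heat-bath rule) together with stationarity, which effectively reduces the comparison to a bound on the one-point function $\pi^+_{\Lambda_L}(\sigma_0)-m_\beta$, controlled by the cluster expansion used in the appendix.
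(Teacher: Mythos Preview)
Your overall strategy coincides with the paper's: localize to a finite box with $+$ boundary conditions, use the spectral gap bound coming from Theorem~\ref{th:quadrato} via \eqref{eq:6}, compare the infinite-volume variance to the finite-volume one by monotonicity together with the one-point estimate $\pi^+_{\Lambda_L}(\sigma_0)-m_\beta=O(e^{-c'L})$, and finally balance $L$ against $t$.

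There is, however, a genuine gap in your variance-comparison step. Your claim that decomposing $g_L-m_\beta$ into its positive and negative parts, ``each monotone in $\sigma$'', yields $\rho(t)\le \Var^+_{\Lambda_L}(g_L)+O(e^{-c'L})$ does not go through: while $((g_L-m_\beta)^+)^2$ is increasing and FKG gives the inequality in the right direction under $\pi^+_\infty|_{\Lambda_L}\preceq \pi^+_{\Lambda_L}$, the function $((g_L-m_\beta)^-)^2$ is \emph{decreasing}, so the corresponding inequality goes the wrong way. More fundamentally, $g_L=e^{t\cL^+_{\Lambda_L}}\sigma_0$ takes values in $[-1,1]$, so neither $g_L^2$ nor $(e^{t\cL}\sigma_0)^2$ is monotone, and your pointwise bound $e^{t\cL}\sigma_0\le \tilde g_L$ cannot be squared. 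The paper handles this by a one-line trick: it replaces $f=\sigma_0$ by $f=\sigma_0+1\ge 0$, so that $e^{t\cL^+_{J_\ell}}f\ge 0$ is increasing and hence its \emph{square} is increasing as well; FKG then directly gives $\pi^+_\infty[(e^{t\cL}f)^2]\le \pi^+_{J_\ell}[(e^{t\cL^+_{J_\ell}}f)^2]$, and the variance comparison follows immediately from the one-point estimate \eqref{eq:mumu} on the means. With this small modification your argument becomes complete and essentially identical to the paper's.
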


\section{Auxiliary definitions and results}
In this section we collect some more detailed notation that will be
needed during the proof of the main results, together with certain
additional auxiliary results that will play a key role in our
analysis.
	
\subsection{Geometrical definitions}
\begin{figure}[htp]
\begin{center}
  \includegraphics[width=0.8\textwidth]{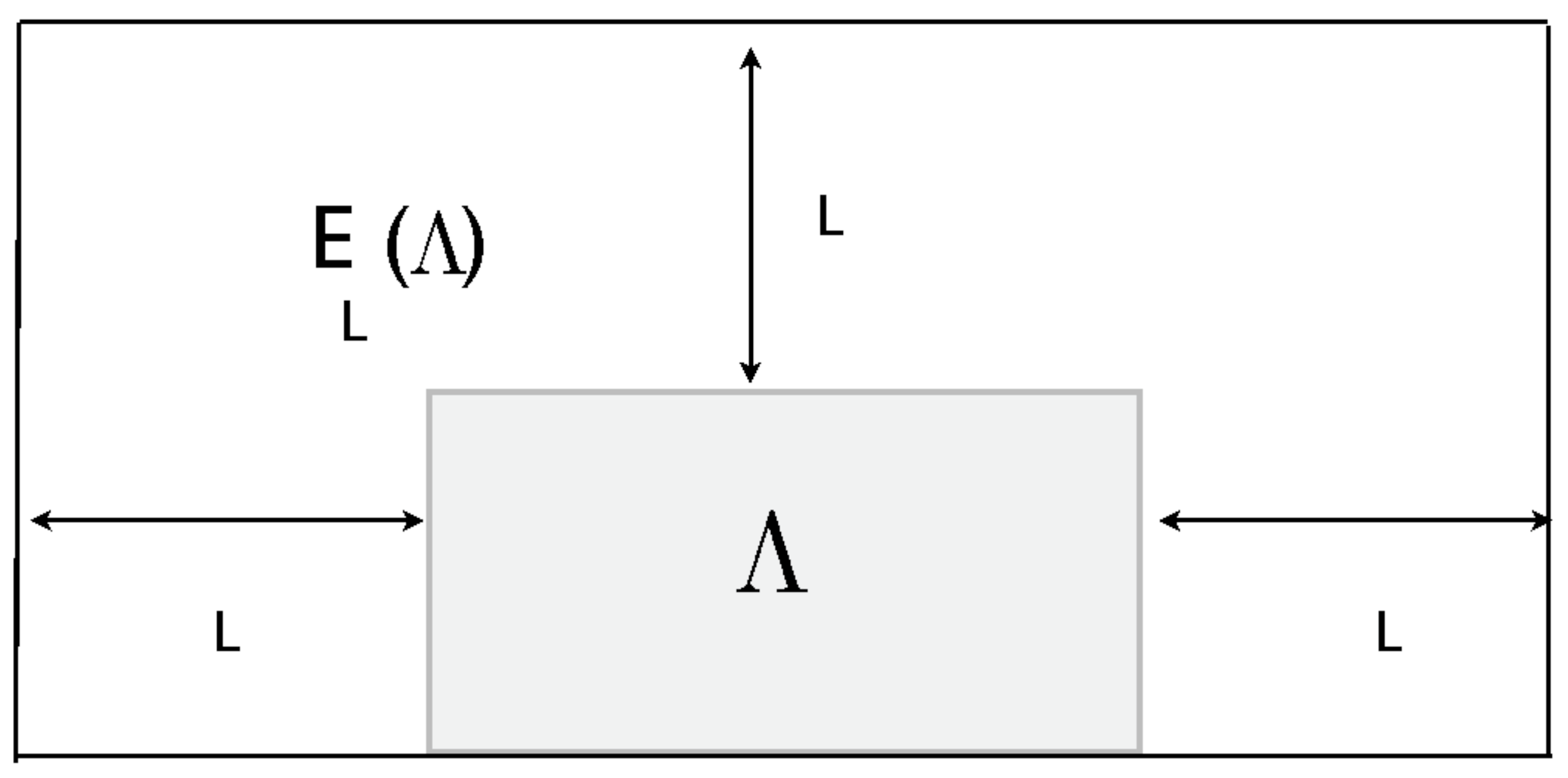}
\end{center}
\caption{The rectangle $\L$ and its
enlargement $E_L(\L)$}
\label{fig:enlarged} 
\end{figure}
The boundary of a finite subset $\L\subset \Z^2$, in the sequel
denoted  by $\partial \L$, consists of those sites in $\bbZ^2\setminus \L$
at unit distance from $\L$.  Given a rectangle $\L\subset \Z^2$ and
$L\in\N$, we denote by $ E_L(\L)$ the {\sl enlarged rectangle}
obtained from $\L$ by shifting by $L$ units the Northern boundary
upwards, the Eastern boundary eastward and the Western boundary
westward (see Figure \ref{fig:enlarged}).
 
Given $\gep>0$ (to be thought of as very small) and $L\in \N$ we let
\begin{equation*}
  R^\gep_L=\{(i,j)\in \bbZ^2:\ 1\le i\le L, \, 1\le j\le \lceil L^{\frac 12+\gep}\rceil\}.
\end{equation*}
Similarly we define the rectangle $Q^\gep_L$, the only difference being
that the vertical sides contain now $\lceil (2L+1)^{\frac 12+\gep}\rceil$
sites.  

\\
{\bf Notation warning.} In the sequel we will often remove the
superscript $\gep$ from our notation of the various rectangles
involved since it is a (small) parameter that we imagine given once
and for all.

\subsection{Boundary conditions}
\label{sect:b.c} A boundary condition $\tau$ for a given domain (typically, a rectangle) is an assignment of values $\pm1$ to each
spin on the boundary of the domain under consideration.

\begin{definition}
  A distribution $\bP$ of b.c. for a rectangle $R$ (which 
  will be $R_L$, $Q_L$ or a rectangle obtained by translating one of them
by a vector $v\in \Z^2$) is
  said to belong to $\mathcal D(R)$ if its marginal on the union of
  North, East and West borders of $R$ is stochastically dominated by
  (the marginal of) the minus phase $\pi^-_\infty$ of the infinite system,
  while the marginal on the South border of $R$ dominates the
  (marginal of the) infinite plus phase $\pi^+_\infty$.
\end{definition}
The most natural example is to take $\bP$ concentrated on the boundary
conditions $\tau$ given by $\tau\equiv -$ on the North, East and West
borders, and $\tau\equiv+$ on the South border. In that case we will
sometimes write $\pi_R^{-,-,+,-}$ for the equilibrium measure in $R$,
where we agree to order the sides of the border clockwise starting
from the Northern one.

\subsection{The inductive statements}
Here we define two inductive statements that will be proved later by a ``halving the scale'' technique.
\begin{definition}
For any given $L\in\N, \delta>0, t>0$ consider the system in $R_L$, with boundary condition $\tau$ chosen from 
some distribution $\bf P$. We say that
$\cA(L,t,\delta)$ holds if 
\begin{eqnarray}
  \bE\|\mu^{\pm}_t-\pi^\t\|\le \delta
\label{eq:A(L)}
\end{eqnarray}
for every $\bP\in \mathcal D(R_L)$. 

The statement $\cB(L,t,\delta)$ is defined similarly, the only difference
being that the rectangle $R_L$ is replaced by $Q_L$ (in particular,
$\bP$ is required to belong to $\mathcal D(Q_L)$).
\end{definition}

\subsection{Censoring inequalities}
\label{sect:censoring}

In this section, we consider the Glauber dynamics 
in a generic finite domain $\L\subset \Z^2$, not necessarily a rectangle.
The boundary conditions $\tau$ are not specified, because the results are
independent of it.

A fundamental role in our work is played by the censoring inequality
proved recently by Y. Peres and P. Winkler: this says, roughly speaking,
that removing (deterministically) some updates from the dynamics 
can only slow down equilibration, if the initial configuration is
the maximal (or minimal) one.

First of all we need a simple but useful lemma:
\begin{Lemma}\cite[Lemma 16.7]{cf:noteperes}
\label{th:lemma167}
Let $\pi,\mu,\nu$ be laws on a finite, partially ordered probability
space. If $\nu \preceq \mu$ and $\nu/\pi$ is increasing, i.e.
\begin{eqnarray}
  \frac{\nu(\s)}{\pi(\si)}\ge \frac{\nu(\h)}{\pi(\h)}
\end{eqnarray}
whenever $\si\ge \h$,
then
\begin{eqnarray}
  \|\nu-\pi\|\le \|\mu-\pi\|.
\end{eqnarray}
\end{Lemma}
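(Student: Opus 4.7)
The plan is to exploit the variational characterization of total variation distance together with the monotonicity hypotheses in a very direct way. Recall that for any two probability measures $\nu,\pi$ on a finite space,
\begin{equation*}
\|\nu-\pi\| = \max_{S} (\nu(S)-\pi(S)) = \nu(A)-\pi(A),
\end{equation*}
where the maximum is attained on the \emph{likelihood set} $A:=\{\sigma: \nu(\sigma)\ge \pi(\sigma)\}$ (equivalently, $\{\sigma : \nu(\sigma)/\pi(\sigma)\ge 1\}$).

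The first key observation is that the hypothesis that $\nu/\pi$ is increasing implies that $A$ is an \emph{up-set}: if $\sigma \in A$ and $\sigma'\ge \sigma$, then $\nu(\sigma')/\pi(\sigma')\ge \nu(\sigma)/\pi(\sigma) \ge 1$, so $\sigma'\in A$. Hence the indicator $\mathbf{1}_A$ is an increasing function on the ordered space.

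Now I would invoke the stochastic domination $\nu\preceq \mu$, applied to this particular increasing function:
\begin{equation*}
\nu(A) = \nu(\mathbf{1}_A) \le \mu(\mathbf{1}_A)=\mu(A).
\end{equation*}
Combining with the variational formula for total variation applied both to $\nu$ and to $\mu$, I get
\begin{equation*}
\|\nu-\pi\| = \nu(A)-\pi(A) \le \mu(A)-\pi(A) \le \max_{S}(\mu(S)-\pi(S))=\|\mu-\pi\|,
\end{equation*}
which is the desired inequality.

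There is no real obstacle here: the whole content of the lemma is the trivial remark that the set on which $\nu$ exceeds $\pi$ is automatically an increasing event under the stated monotonicity assumption on the likelihood ratio, after which stochastic domination and the variational formula conclude immediately. The only thing to be careful about is that ties in the likelihood ratio $\nu/\pi$ do not cause any trouble — but this is a non-issue since one may take $A$ to be either $\{\nu\ge\pi\}$ or $\{\nu>\pi\}$ without changing $\nu(A)-\pi(A)$ by more than zero.
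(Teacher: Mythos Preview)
Your argument is correct and is exactly the standard proof: the likelihood set $A=\{\nu\ge\pi\}$ is an up-set because $\nu/\pi$ is increasing, stochastic domination gives $\nu(A)\le\mu(A)$, and the variational formula for total variation concludes. The paper does not supply its own proof of this lemma but simply cites \cite[Lemma 16.7]{cf:noteperes}; your proof is the one that appears there.
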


The result of Peres-Winkler can be stated as follows:
\begin{Theorem}\cite[Theorem 16.5]{cf:noteperes}
\label{th:PW}
Let $m\in \N$, $\underline v:=(v_1,\ldots,v_m)$ a sequence of sites in
$\L$, and let $\underline v'$ be a sub-sequence of $\underline v$.  Let
$\mu_0$ be a law on $\O_{\L}$ such that $\mu_0/\pi$ is
increasing. Denote by
$\mu_{\underline v}$ the law obtained starting from $\mu_0$ and
performing heat-bath updates at the ordered sequence of sites
$\underline v$. Similarly for $\mu_{\underline v'}$. Then, 
\begin{eqnarray}
\label{eq:PW1}
  \|\mu_{\underline v}-\pi\|\le   \|\mu_{\underline v'}-\pi\|
\end{eqnarray}
and $ \mu_{\underline v}\preceq \mu_{\underline v'}$.  Moreover,
$\mu_{\underline v}/\pi$ and $\mu_{\underline v'}/\pi$ are increasing.
\end{Theorem}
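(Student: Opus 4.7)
The plan is to combine Lemma \ref{th:lemma167} with a simple induction on the number of deleted updates, driven by two invariants maintained along the heat-bath evolution: (I) $\mu_{\underline v}\preceq \mu_{\underline v'}$, and (II) both $\mu_{\underline v}/\pi$ and $\mu_{\underline v'}/\pi$ are increasing. Once (I) and (II) are established, the norm inequality \eqref{eq:PW1} is immediate from Lemma \ref{th:lemma167} applied with ``$\mu$''$=\mu_{\underline v'}$ and ``$\nu$''$=\mu_{\underline v}$. By iterating one deletion at a time, the statement reduces to the case where $\underline v'$ is obtained from $\underline v$ by removing a single entry, say $v_k$.

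The core ingredient is a one-step lemma: if $\nu$ is a law on $\O_\L$ with $\nu/\pi$ increasing and $P_v$ denotes the heat-bath kernel at a site $v\in\L$, then $\nu P_v \preceq \nu$ and $(\nu P_v)/\pi$ is still increasing. To prove it, write $\s=(\h,\s_v)$ with $\h\in\O_{\L\setminus\{v\}}$. A direct computation gives $(\nu P_v)(\s)/\pi(\s)=\nu_{\L\setminus\{v\}}(\h)/\pi_{\L\setminus\{v\}}(\h)$, and this marginal ratio equals the $\pi(\cdot\tc\h)$-average over $\s_v\in\{\pm 1\}$ of the pointwise increasing map $\s_v\mapsto (\nu/\pi)(\h,\s_v)$. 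As $\h$ increases, the integrand grows pointwise (by hypothesis) and the measure $\pi(\cdot\tc\h)$ becomes stochastically larger in $\s_v$ (FKG lattice condition on the Ising $\pi$), so the average is monotone in $\h$; this is the ``ratio'' half of the lemma. For the stochastic order, testing against an increasing $f$ one finds
\begin{equation*}
\nu(f)-\nu P_v(f)=\sum_\h \nu_{\L\setminus\{v\}}(\h)\bigl[\nu(+\tc\h)-\pi(+\tc\h)\bigr]\bigl[f(\h,+)-f(\h,-)\bigr],
\end{equation*}
and both bracketed factors are non-negative: the second by monotonicity of $f$, the first because it is exactly the increasing ratio condition applied to the two-element chain $(\h,-)\le(\h,+)$.

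With the one-step lemma in hand, the induction on the length of the update sequence is mechanical. Through the first $k-1$ common updates both sequences produce the same measure $\nu_{k-1}$, which has increasing ratio with $\pi$ by iterating the lemma starting from $\mu_0$. At step $k$ the censored sequence stays at $\nu_{k-1}$, while the uncensored one becomes $\nu_{k-1}P_{v_k}\preceq \nu_{k-1}$ with ratio to $\pi$ still increasing. For $j>k$ both measures undergo the same $P_{v_j}$: the increasing-ratio property is preserved by applying the one-step lemma to each, while the partial order $\preceq$ is preserved because the heat-bath Glauber kernel is monotone, which is precisely the content of the grand coupling already introduced in the paper. This propagates (I) and (II) to the end of the sequence, and Lemma \ref{th:lemma167} then yields \eqref{eq:PW1}.

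The delicate point, and the one I expect to be the main obstacle, is the preservation of ``$\nu/\pi$ increasing'' under a single heat-bath step: it is genuinely here that both the FKG lattice condition on $\pi$ and the specific structure of single-site heat-bath kernels enter. The same statement would fail under a weaker hypothesis such as $\mu_0\succeq \pi$, or for a generic $\pi$-reversible kernel, so the induction must be set up so that exactly the ``ratio increasing'' property, and not something weaker, is threaded through every step; everything else in the proof is a formal consequence of monotone coupling together with Lemma \ref{th:lemma167}.
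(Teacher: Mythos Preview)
Your proof is correct and is essentially the standard Peres--Winkler argument. Note that the paper does not actually give its own proof of this statement: it simply cites \cite{cf:noteperes} and remarks that the generalization from $\mu_0=\delta_+$ to arbitrary $\mu_0$ with $\mu_0/\pi$ increasing ``is essentially identical''. Your one-step lemma (that $\nu P_v\preceq\nu$ and $(\nu P_v)/\pi$ remains increasing whenever $\nu/\pi$ is increasing) is precisely the inductive engine used in that reference, and the reduction to a single deletion together with monotonicity of the heat-bath kernel is the same scaffolding; so there is nothing to compare beyond saying your write-up faithfully reconstructs the cited proof.
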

It is easy to see that, if $\mu_0/\pi$ is instead
decreasing, \eqref{eq:PW1} still holds, while
the other statements become 
$ \mu_{\underline v'}\preceq \mu_{\underline v}$ and 
 $\mu_{\underline v}/\pi$, $\mu_{\underline v'}/\pi$ decreasing.

Here, ``performing a heat-bath update at a given site $v\in \Lambda$'' simply 
means freezing the configuration outside $v$ and extracting $\si_v$ from 
the equilibrium distribution conditioned on the configuration outside $v$.

Theorem \ref{th:PW} is proved in \cite{cf:noteperes} in the particular case
where $\mu_0$ is the measure concentrated at the all $+$ configuration,
but the proof of the above generalized statement  is essentially identical.
Let us emphasize that such result is not specific of the Ising model but
requires in an essential way monotonicity of the dynamics.

\medskip

From Lemma \ref{th:lemma167} and Theorem \ref{th:PW} we easily extract
the continuous-time censoring inequality we need:
\begin{Theorem}
  \label{th:PWgeneralizz}
  Let $n\in \N$, $0\equiv t_0<t_1<\ldots t_n\equiv T$ and $\L_i \subset
  \L, i=1,\ldots,n$.  Let $\mu_0$ be a law on $\O_{\Lambda}$
  such that $\mu_0/\pi$ is increasing.  Let $\mu_T$ be the law at time $T$ of the continuous-time,
  heat-bath dynamics in $\Lambda$, started from $\mu_0$ at time zero. Also,
  let $\mu'_T$ be the law at time $T$ of the modified dynamics which
  again starts from $\mu_0$ at time zero, and which is obtained from
  the above continuous time, heat-bath dynamics by keeping only the
  updates in $\L_i$ in the time interval $[t_{i-1},t_i)$ for
  $i=1,\ldots,n$. Then,
  \begin{eqnarray}
\label{eq:PWgen}
    \|\mu_T-\pi\|\le \|\mu'_T-\pi\|,
  \end{eqnarray}
and  $\mu_T\preceq \mu'_T$; moreover, $\frac{\mu_T}{\pi}$, $\frac{\mu'_T}{\pi}$ are 
both increasing.
\end{Theorem}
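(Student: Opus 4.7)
The strategy is to reduce the continuous-time statement to the discrete-time censoring result of Theorem \ref{th:PW} by conditioning on the Poisson clocks driving the heat-bath dynamics, and then averaging. Let $\omega$ denote the realization of the independent rate-one Poisson clock processes attached to the sites of $\L$, restricted to the interval $[0,T]$. Almost surely $\omega$ produces finitely many clock rings $0<s_1<s_2<\dots<s_N<T$ at (random) sites $v_1,\dots,v_N$. Conditional on $\omega$, the continuous-time heat-bath dynamics is exactly the ordered application, to $\mu_0$, of heat-bath updates at the sequence $\underline v(\omega):=(v_1,\dots,v_N)$; denote the resulting law by $\mu_T^{\omega}$. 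The censored dynamics conditional on $\omega$ corresponds instead to the sub-sequence $\underline v'(\omega)$ obtained by keeping only those clock rings with $s_k\in[t_{i-1},t_i)$ and $v_k\in \L_i$; denote its law by $(\mu'_T)^{\omega}$.

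For each realization of $\omega$, Theorem \ref{th:PW} applies directly to the discrete chains driven by $\underline v(\omega)$ and $\underline v'(\omega)$, and yields $\mu_T^{\omega}\preceq (\mu'_T)^{\omega}$ together with the fact that $\mu_T^{\omega}/\pi$ and $(\mu'_T)^{\omega}/\pi$ are both increasing. These two properties pass to the $\omega$-averages: since $\mu_T=\bE_\omega[\mu_T^\omega]$ and $\mu'_T=\bE_\omega[(\mu'_T)^\omega]$, stochastic domination is preserved under convex combinations of laws, hence $\mu_T\preceq \mu'_T$; moreover $\mu_T(\si)/\pi(\si)=\bE_\omega[\mu_T^\omega(\si)/\pi(\si)]$ (and analogously for $\mu'_T$) is a pointwise average in $\omega$ of increasing functions of $\si$, hence increasing.

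The crucial point is that one cannot simply take a convex combination of the $\omega$-wise variation-distance bounds of Theorem \ref{th:PW}, since that would produce $\|\mu_T-\pi\|\le \bE_\omega\|(\mu'_T)^\omega-\pi\|$ rather than the desired $\|\mu_T-\pi\|\le \|\mu'_T-\pi\|$. This is precisely where Lemma \ref{th:lemma167} is used: having just verified the two hypotheses $\nu\preceq \mu$ and $\nu/\pi$ increasing with $\nu=\mu_T$ and $\mu=\mu'_T$, the lemma delivers exactly \eqref{eq:PWgen}. The only delicate step, and the one I view as the main obstacle, is checking that the $\omega$-conditional evolution really coincides with the discrete Markov chain of heat-bath updates considered in Theorem \ref{th:PW}; this follows from the standard graphical construction of the continuous-time Glauber dynamics via independent Poisson clocks together with the auxiliary i.i.d.\ randomness used to perform each heat-bath update, so that conditioning on the clocks alone leaves well-defined random measures $\mu_T^\omega,(\mu_T')^\omega$ to which the discrete result applies.
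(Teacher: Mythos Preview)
Your proof is correct and follows essentially the same approach as the paper: condition on the Poisson clock realization, apply the discrete-time Theorem~\ref{th:PW} pathwise to obtain $\mu_T^\omega\preceq(\mu'_T)^\omega$ and the monotonicity of the ratios, average over $\omega$ to recover $\mu_T\preceq\mu'_T$ and the increasing ratios, and then invoke Lemma~\ref{th:lemma167} (rather than averaging the pathwise total-variation bound) to conclude~\eqref{eq:PWgen}. Your explicit remark that the $\omega$-wise variation-distance bounds do not directly average to the desired inequality, and that Lemma~\ref{th:lemma167} is precisely what bridges this gap, matches the paper's argument exactly.
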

Needless to say, if instead $\mu_0/\pi$ is decreasing then all inequalities
except \eqref{eq:PWgen} are reversed.

{\sl Proof.} Let $m$ be the (random)
number of Poisson clocks which ring during the time interval $[0,T)$, and
denote by $s_i$ and $v_i\in \L, i\le m$ the times and sites where they
ring. We order the times as $s_i<s_{i+1}$ and of course $v_i$ are IID
and chosen uniformly in $\L$.  Define then
$w:=((v_1,s_1),\ldots,(v_m,s_m))$ and let $\mu_w$ be obtained from
$\mu_0$ performing single-site heat-bath updates at sites
$v_1,v_2,\ldots,v_m$ (in this order). Analogously, let $w'$ be
obtained by $w$ by removing all pairs $(v_j,s_j)$ such that $v_j\notin
\L_k$ where $k$ is such that $s_j\in [t_{k-1},t_k)$, and $\mu_{w'}$ be
defined in the obvious way.  For any realization of $w$ one has from
Theorem \ref{th:PW} that $\mu_w\preceq \mu_{w'}$ and that both
$\mu_w/\pi$ and $\mu_{w'}/\pi$ are increasing.  Since $\mu_T$ (respectively
$\mu'_T$) is just the average over $w$ of $\mu_w$ (resp. of $\mu_{w'}$),
one obtains all the claims of the theorem (except \eqref{eq:PWgen}) by
linearity. Inequality \eqref{eq:PWgen} comes simply from $\mu_T\preceq
\mu'_T$,  plus Lemma \ref{th:lemma167} and the fact that $\mu_T/\pi$
is increasing.
\qed

\medskip

We will need at various instances the following easy consequences
of the above facts.
\begin{Corollary}
\label{th:corollaPW}
  Let $t>0$ and assume that $\mu_0/\pi$ is increasing. 
Denote by $\mu_t$ the evolution started from $\mu_{t=0}=\mu_0$, and by
$\mu_t^+$ the one started from the maximal configuration $+$. Then
  \begin{eqnarray}
    \|\mu_t -\pi\|\le \|\mu_t^+-\pi\|.
  \end{eqnarray}
\end{Corollary}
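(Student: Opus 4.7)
The plan is to combine the continuous-time censoring theorem (Theorem \ref{th:PWgeneralizz}) with the auxiliary monotonicity lemma (Lemma \ref{th:lemma167}), using the maximal configuration $+$ as the upper envelope.

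First, I would observe that $\delta_+$, the point mass on the top configuration, trivially dominates every other probability measure on $\Omega_\Lambda$, so in particular $\mu_0 \preceq \delta_+$. By the grand coupling described in the introduction (Poisson clocks plus independent uniform $[0,1]$ coin tosses used to resample according to the local conditional Gibbs law), the heat-bath dynamics is monotone: if the initial laws are ordered, the laws at time $t$ remain ordered. Hence $\mu_t \preceq \mu_t^+$ for every $t \ge 0$.

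Next, I would invoke Theorem \ref{th:PWgeneralizz} with the trivial choice $n=1$ and $\Lambda_1 = \Lambda$ (i.e.\ no updates are actually censored). Since by hypothesis $\mu_0/\pi$ is increasing, that theorem guarantees that the Radon--Nikodym derivative $\mu_t/\pi$ is increasing as well.

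Having these two ingredients, I would set $\nu = \mu_t$ and $\mu = \mu_t^+$ in Lemma \ref{th:lemma167}: the hypothesis $\nu \preceq \mu$ is step one, while the hypothesis that $\nu/\pi$ is increasing is step two. The conclusion of the lemma is exactly
\[
\|\mu_t - \pi\| \le \|\mu_t^+ - \pi\|,
\]
which is the claim. There is essentially no obstacle here — the only thing worth double-checking is that $\delta_+/\pi$ is indeed an ``increasing'' density (it vanishes everywhere except at the maximum, hence is tautologically monotone), so that the censoring machinery may legitimately be applied to $\mu_t^+$ if one ever wished to iterate; but for the corollary as stated this is not even needed, since we only use the increasing character of $\mu_t/\pi$.
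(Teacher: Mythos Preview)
Your proof is correct and follows essentially the same route as the paper: use Theorem \ref{th:PWgeneralizz} (with no censoring) to deduce that $\mu_t/\pi$ is increasing, use monotonicity of the heat-bath dynamics to get $\mu_t \preceq \mu_t^+$, and then apply Lemma \ref{th:lemma167}. Your write-up is simply more explicit about why each hypothesis is satisfied.
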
 
\begin{proof} 
We know
from Theorem \ref{th:PWgeneralizz} that $\mu_t/\pi$ is increasing.
Moreover, by monotonicity of the dynamics $\mu_t\preceq \mu_t^+$. The
claim then follows from Lemma \ref{th:lemma167}. 
\end{proof}

\begin{Corollary}
\label{th:corollaPW2}
Let $\g(t)=\max\(\|\mu^+_t-\pi\|,\|\mu^-_t-\pi\|\)$. Then
\begin{equation*}
  \g(t+s)\le 4\g(t)\g(s) \qquad \forall t,s\ge0.
\end{equation*}
\end{Corollary}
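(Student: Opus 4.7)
The strategy is to combine the Markov semigroup property with the censoring inequality through a ``common part plus discrepancy'' decomposition. Fix $t,s\ge 0$ and set $d:=\|\mu^+_t-\pi\|$. Since $\mu^+_0=\delta_+$ trivially satisfies that $\mu^+_0/\pi$ is increasing, Theorem \ref{th:PWgeneralizz} guarantees that $\mu^+_t/\pi$ is still increasing; I therefore decompose
\begin{equation*}
\mu^+_t=(1-d)\,\alpha+d\,\rho_1, \qquad \pi=(1-d)\,\alpha+d\,\rho_2,
\end{equation*}
with $\alpha:=(\mu^+_t\wedge\pi)/(1-d)$, $d\rho_1:=(\mu^+_t-\pi)^+$ and $d\rho_2:=(\pi-\mu^+_t)^+$, so that $\alpha,\rho_1,\rho_2$ are probability measures on $\Omega_\Lambda$.

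Evolving both sides for an additional time $s$ and using the invariance $\pi e^{s\mathcal L}=\pi$ gives $\mu^+_{t+s}-\pi=d\,(\rho_1 e^{s\mathcal L}-\rho_2 e^{s\mathcal L})$, whence by the triangle inequality
\begin{equation*}
\|\mu^+_{t+s}-\pi\|\le d\,\bigl(\|\rho_1 e^{s\mathcal L}-\pi\|+\|\rho_2 e^{s\mathcal L}-\pi\|\bigr).
\end{equation*}
To control each term by $\gamma(s)$ I use the censoring inequality. Observe that $\rho_1/\pi=((\mu^+_t/\pi)-1)^+/d$ is the positive part of an increasing function, hence increasing; similarly $\rho_2/\pi$ is decreasing. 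Moreover $\rho_1\preceq\delta_+$ and $\rho_2\succeq\delta_-$ trivially, so by monotonicity of the dynamics $\rho_1 e^{s\mathcal L}\preceq\mu^+_s$ and $\rho_2 e^{s\mathcal L}\succeq\mu^-_s$. Corollary \ref{th:corollaPW} applied with initial law $\rho_1$ therefore yields $\|\rho_1 e^{s\mathcal L}-\pi\|\le\|\mu^+_s-\pi\|\le\gamma(s)$; the straightforward dual for decreasing initial ratios (which follows from Lemma \ref{th:lemma167} with the roles of $\preceq$ and $\succeq$ exchanged, together with the decreasing-case version of Theorem \ref{th:PWgeneralizz} pointed out in the paper) gives $\|\rho_2 e^{s\mathcal L}-\pi\|\le\|\mu^-_s-\pi\|\le\gamma(s)$.

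Assembling the pieces, $\|\mu^+_{t+s}-\pi\|\le 2d\,\gamma(s)\le 2\gamma(t)\gamma(s)$. Running the same argument starting from $\mu^-_t$ (for which $\mu^-_t/\pi$ is decreasing and the monotonicity of the auxiliary measures $\rho'_1,\rho'_2$ is exchanged) produces the matching bound for $\|\mu^-_{t+s}-\pi\|$; taking the maximum yields $\gamma(t+s)\le 2\gamma(t)\gamma(s)$, which is in fact slightly stronger than the announced inequality with constant $4$. I do not foresee a genuine obstacle: the only point requiring some care is to verify that the positive and negative parts of $\mu^+_t-\pi$ still satisfy the monotonicity condition on their density with respect to $\pi$ needed to invoke the censoring inequality, and this is immediate from the decomposition together with the fact that the map $f\mapsto f^+$ preserves monotonicity.
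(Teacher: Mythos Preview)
Your proof is correct and in fact yields the slightly better constant $2$ in place of $4$. The one caveat is the degenerate case $d=0$, where the decomposition is formally undefined; but then $\mu^+_t=\pi$ and the inequality is trivial, so this is harmless.

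The route, however, is genuinely different from the paper's. The paper works on the \emph{functional} side: it writes $\|\mu^+_{t+s}-\pi\|=\mu^+_{t+s}(A)-\pi(A)$ for the set $A=\{\sigma:\mu^+_{t+s}(\sigma)\ge\pi(\sigma)\}$, observes via the censoring inequality that $A$ is an increasing event, and then splits the semigroup at time $t$ to write $\mu^+_{t+s}(f)-\pi(f)=\mu^+_t(\mu^\sigma_s(f))-\pi(\mu^\sigma_s(f))$ with $f=\mathbf{1}_A-\pi(A)$. Monotonicity in $\sigma$ of $\mu^\sigma_s(f)$ then localizes the supremum at the extremal configurations. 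Your argument instead works on the \emph{measure} side: you use the maximal-coupling decomposition $\mu^+_t=(1-d)\alpha+d\rho_1$, $\pi=(1-d)\alpha+d\rho_2$, propagate the common part exactly, and control the residual pieces $\rho_i e^{s\mathcal L}$ by showing that $\rho_1/\pi$ (resp.\ $\rho_2/\pi$) inherits monotonicity from $\mu^+_t/\pi$ via $f\mapsto f^+$, so that Corollary~\ref{th:corollaPW} (and its decreasing dual) applies. Both approaches exploit the same underlying ingredient---that the density $\mu^+_t/\pi$ is increasing---but yours packages the submultiplicativity more cleanly and avoids the extra factor of $2$ coming from the paper's bound $|\mu^\pm_s(f)|\le 2\gamma(s)$.
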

\begin{proof}
Notice that $\|\mu^+_{t+s}-\pi\|=\mu^+_{t+s}(A)-\pi(A)$ where $A=\{\s:\ \mu^+_{t+s}(\s)\ge \pi(\s)\}$. Because of  Theorem \ref{th:PWgeneralizz} the event $A$ is increasing so that $f:=\id_A-\pi(A)$ is an increasing function (and of course
$\pi(f)=0$). Thus 
\begin{eqnarray*}
\|\mu^+_{t+s}-\pi\|
&=& \mu^+_{t+s}(A)-\pi(A)\\
&=&\mu^+_{t}\(\mu^\s_s(f)\)\\
&=& \mu^+_{t}\(\mu_s^\s(f)\) -\pi\(\mu_s^\s(f)\)\\
&\le& 2\g(t)\sup_{\s}| \mu_s^\s(f)|\\
&\le & 2\g(t)\max\{| \mu_s^+(f)|,|\mu_s^-(f)|\}\\
&\le&4\g(t)\g(s) .
\end{eqnarray*}
Similarly for $\mu^-$. 
\end{proof}

\subsection{Perturbation of the boundary conditions and mixing time}
\label{sec:perturbation}
Consider a finite set $\L$ and two boundary conditions $\t,\hat\t$.
Let $T_{\rm mix} $ and $\hat T_{\rm mix}$ be the associated mixing times for
the Glauber chain in $\L$ with b.c. $\t$ and $\hat \t$, respectively.  Let
$M=\max\{\|\frac{\pi^\t}{\pi^{\hat \t}}\|_\infty
,\|\frac{\pi^{\hat\t}}{\pi^\t}\|_\infty\}$.
\begin{Lemma}There exists a constant $c$ independent of $\L,\t,\hat\t$ such that
  \begin{equation}
    \label{eq:7}
    T_{\rm mix} \le cM^3|\L|\hat T_{\rm mix}.
  \end{equation}
\end{Lemma}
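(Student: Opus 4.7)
My plan is to reduce the inequality to a comparison of the relaxation times and then invoke the general bound \eqref{eq:6} together with the elementary estimate $\pi^*\geq \exp(-c(\beta)|\Lambda|)$ (which follows from $|H^\tau(\sigma)|\leq c|\Lambda|$) to pass from $T_{\mathrm{relax}}$ to $T_{\mathrm{mix}}$ at the cost of a multiplicative factor $|\Lambda|$. So the real work is to show $T_{\mathrm{relax}}^\tau\leq c(\beta) M^3\, T_{\mathrm{relax}}^{\hat\tau}$. The first ingredient is a straightforward comparison of variances: writing
$$\Var^\tau(f)=\frac12\sum_{\sigma,\eta}\pi^\tau(\sigma)\pi^\tau(\eta)(f(\sigma)-f(\eta))^2$$
and using the pointwise bound $\pi^\tau\leq M\pi^{\hat\tau}$ in both factors immediately yields $\Var^\tau(f)\leq M^2\,\Var^{\hat\tau}(f)$.

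The second ingredient is a comparison of the Dirichlet forms, which requires one additional observation. Decompose $\cE^\tau(f,f)=\sum_{x\in\Lambda}\sum_{\sigma_{\{x\}^c}}\pi^\tau(\sigma_{\{x\}^c})\Var_x^\tau(f)(\sigma_{\{x\}^c})$. For the nearest-neighbor 2D Ising model the local field at $x$ is bounded in absolute value by $4$ independently of the global boundary condition, so the conditional probability $p_x^\tau=\pi^\tau(\sigma_x=+\,|\,\sigma_{\{x\}^c})$ lies in $[\alpha(\beta),1-\alpha(\beta)]$ with $\alpha(\beta)=1/(1+e^{8\beta})>0$, and the same holds for $p_x^{\hat\tau}$. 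Since $\Var_x^\tau(f)(\sigma_{\{x\}^c})=p_x^\tau(1-p_x^\tau)(f(\sigma^{x,+})-f(\sigma^{x,-}))^2$, the ratio $\Var_x^\tau(f)/\Var_x^{\hat\tau}(f)$ is bounded pointwise by a constant $C(\beta)$. Combined with $\pi^\tau(\sigma_{\{x\}^c})\leq M\pi^{\hat\tau}(\sigma_{\{x\}^c})$ (which follows from the pointwise domination by marginalization), this gives $\cE^\tau(f,f)\leq C(\beta)M\,\cE^{\hat\tau}(f,f)$, and by symmetry also $\cE^\tau(f,f)\geq C(\beta)^{-1}M^{-1}\cE^{\hat\tau}(f,f)$.

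Plugging these two ingredients into the variational characterization $T_{\mathrm{relax}}^\tau=\sup_f\Var^\tau(f)/\cE^\tau(f,f)$ gives
$$T_{\mathrm{relax}}^\tau\leq\sup_f\frac{M^2\Var^{\hat\tau}(f)}{C(\beta)^{-1}M^{-1}\cE^{\hat\tau}(f,f)}=C(\beta)M^3\,T_{\mathrm{relax}}^{\hat\tau}\leq C(\beta)M^3\hat T_{\mathrm{mix}},$$
using $T_{\mathrm{relax}}^{\hat\tau}\leq\hat T_{\mathrm{mix}}$ from \eqref{eq:6}. Applying \eqref{eq:6} again in the $\tau$ direction, together with $\log(2e/\pi^*)\leq c'(\beta)|\Lambda|$, yields $T_{\mathrm{mix}}\leq c'(\beta)|\Lambda|T_{\mathrm{relax}}^\tau\leq c''(\beta)|\Lambda|M^3\hat T_{\mathrm{mix}}$, which is the claim. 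There is no serious obstacle in the argument; the only point that needs a moment's thought is the uniform comparability of the local conditional variances, which rests entirely on the bounded range of the interaction (so that the local field, and hence the flip rates, take values in a finite, $\tau$-independent set).
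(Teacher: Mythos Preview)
Your argument is correct and is exactly the approach taken in the paper: pass from $T_{\rm mix}$ to $T_{\rm relax}$ via \eqref{eq:6}, compare relaxation times through the variational formula by bounding variance and Dirichlet form with powers of $M$, and then return via $\hat T_{\rm relax}\le \hat T_{\rm mix}$. The only cosmetic difference is in the bookkeeping of the three factors of $M$: the paper attributes one each to the global variance, the outer measure in the Dirichlet form, and the local variances, whereas you spend $M^2$ on the variance (via the double-sum identity) and absorb the local-variance comparison into a $\beta$-dependent constant using the bounded local field --- both routes give the same $cM^3$.
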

\begin{proof}
Thanks to  \eqref{eq:6} and to the variational characterization of the relaxation time we get
  \begin{equation*}
    T_{\rm mix}\le c|\L|T_{\rm relax}\le c |\L|M^3 \hat T_{\rm relax} \le c |\L|M^3\hat T_{\rm mix} 
  \end{equation*}
where the third power of $M$ comes from expressing the Dirichlet form, the variance and the local variances w.r.t. $\pi^\t$ in terms of those w.r.t. $\pi^{\hat \t}$.
\end{proof}
Let now $\D\sset \partial \L$, let $\t_\D$ be some configuration in $\O_\D$, let $\bP$ be some distribution over the boundary conditions on $\partial \L$  and let $\bP^\D$ be the distribution which assigns probability zero to b.c. $\t$ not identically equal to $\t_\D$ on $\D$ and whose marginal on $\partial \L\setminus \D$ coincides with the same marginal of $\bP$.  Notice that we can sample from $\bP^\D$ by first sampling from $\bP$ and then changing (if necessary) to $\t_\D$ the spins of $\t$ in $\D$. If the pair so obtained is denoted by $(\t,\hat \t)$ then the corresponding constant $M$ satisfies $M\le M_\D:=\nep{8\b |\D|}$.

Let $d^{\pm}(t)=\|\mu_t^\pm-\pi^\t\|$ so that $\g(t)=\max\{d^+(t),d^-(t)\}$. Similarly for $\hat d^{\,\pm}(t),\hat \g(t)$. 
\begin{Lemma}
With the above notation
\begin{equation*}
  \bE\(\g(t)\)\le \nep{-M_\D}+ 8\bE\(\hat \g(\hat t)\)
\end{equation*}
where $\hat t = t/(c|\L|^2M_\D^4)$.
\end{Lemma}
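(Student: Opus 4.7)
My plan is to prove the pointwise bound
$\gamma(t) \le e^{-M_\Delta} + 8\,\hat\gamma(\hat t)$ for each realization of the coupled pair $(\tau,\hat\tau)$ described just before the statement (sample $\tau\sim\bP$, then overwrite the spins on $\Delta$ by $\tau_\Delta$ to produce $\hat\tau\sim\bP^\Delta$), and then take expectation. Under this coupling, the constant $M$ associated to the pair $(\pi^\tau,\pi^{\hat\tau})$ is at most $M_\Delta=e^{8\beta|\Delta|}$, as pointed out in the paragraph preceding the lemma. The whole game is to dichotomize according to whether the modified chain with b.c.~$\hat\tau$ has already equilibrated by time $\hat t$ or not.

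\textbf{Case 1: $\hat\gamma(\hat t)>1/(2e)$.} Then $8\hat\gamma(\hat t)>4/e>1\ge \gamma(t)$, so the claimed inequality is trivial.

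\textbf{Case 2: $\hat\gamma(\hat t)\le 1/(2e)$.} By the definition of $\hat T_{\rm mix}=\hat T_{\rm mix}(1/(2e))$, this forces $\hat T_{\rm mix}\le \hat t$. The previous lemma then yields $T_{\rm mix}\le c|\Lambda|M_\Delta^3\hat t$, and, with $\hat t=t/(c|\Lambda|^2 M_\Delta^4)$, we get
\begin{equation*}
\frac{t}{T_{\rm mix}}\;\ge\;\frac{t}{c|\Lambda|M_\Delta^3\hat t}\;=\;|\Lambda|M_\Delta\;\ge\;M_\Delta.
\end{equation*}
Plugging into \eqref{eq:*} with $\epsilon=1/(2e)$ (so that $2\epsilon=1/e$) gives $\gamma(t)\le (1/e)^{\lfloor t/T_{\rm mix}\rfloor}\le e^{-M_\Delta}$, provided the constant $c$ in the definition of $\hat t$ is chosen large enough to absorb the integer-part artefact (which is harmless since $|\Lambda|\ge 1$ and $M_\Delta\ge 1$). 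Combining the two cases gives $\gamma(t)\le e^{-M_\Delta}+8\hat\gamma(\hat t)$ pointwise, and averaging over the coupling $(\tau,\hat\tau)$ yields the announced bound.

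\textbf{Main obstacle.} There is no conceptual difficulty: the result is essentially a bookkeeping combination of the preceding perturbation lemma and the submultiplicative decay \eqref{eq:*}. The only point deserving a bit of care is the choice of the universal constant in $\hat t=t/(c|\Lambda|^2M_\Delta^4)$, which has to be large enough so that in Case 2 the ratio $t/T_{\rm mix}$ exceeds $M_\Delta$ even after one subtracts the floor in \eqref{eq:*}; since $|\Lambda|\ge 1$ and $M_\Delta\ge 1$, this is harmless.
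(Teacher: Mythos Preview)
Your argument has a genuine gap in Case~2. You claim that $\hat\gamma(\hat t)\le 1/(2e)$ forces $\hat T_{\rm mix}\le \hat t$, but recall (Definition following \eqref{eq:4}) that $\hat T_{\rm mix}$ is defined via the supremum of $\|\hat\mu_t^\sigma-\pi^{\hat\tau}\|$ over \emph{all} initial configurations $\sigma$, whereas $\hat\gamma(t)=\max(\hat d^+(t),\hat d^-(t))$ only controls the two extremal starting points. For monotone systems one has the coupling bound $\sup_\sigma \hat d^{\,\sigma}(s)\le 2|\Lambda|\hat\gamma(s)$ (cf.\ \eqref{eq:gathero}), which is precisely the missing $|\Lambda|$-factor; $\hat\gamma(\hat t)\le 1/(2e)$ only gives $\sup_\sigma \hat d^{\,\sigma}(\hat t)\le |\Lambda|/e$, not $\le 1/(2e)$.

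The paper repairs this by combining \eqref{eq:gathero} with the submultiplicativity of $\hat\gamma$ (Corollary~\ref{th:corollaPW2}): from $\hat\gamma(\hat t)\le 1/8$ one gets $\hat\gamma(|\Lambda|\hat t)\le (4\hat\gamma(\hat t))^{|\Lambda|}\le 2^{-|\Lambda|}$, which is small enough to conclude $\hat T_{\rm mix}\le |\Lambda|\hat t$ and then proceed as you do. If you adjust your dichotomy to $\hat\gamma(\hat t)\gtrless 1/8$ and insert this submultiplicativity step in Case~2, your pointwise argument goes through and is then essentially equivalent to the paper's proof.
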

\begin{proof}
Thanks to \eqref{eq:7}  and \eqref{eq:*},
\begin{gather*}
 \bE\(\g(t)\)\le \nep{-M_\D}+ \bP\(T_{\rm mix}\ge t/M_\D\)  \le \nep{-M_\D}+\bP\(\hat T_{\rm mix}\ge t/(c|\L|M_\D^4)\)\\
=\nep{-M_\D}+\bP\(\hat T_{\rm mix}\ge |\L|\hat t\).
  \end{gather*}
  Notice that, for any $s\ge 0$, $\hat T_{\rm mix}\ge s$ implies that
  there exists some starting configuration $\s$ for which the
  variation distance of its distribution at time $s$ from the
  equilibrium measure $\pi^{\hat\t}$, call it $\hat d^{\s}(s)$, is at
  least $1/(2\nep{})$. However, using the global monotone coupling of
  the Glauber chain,
\begin{gather}
  \label{eq:gathero} {\hat d}^{\,\s}(s)\le \bbP\(\h^{+,\,\hat
    \t}_s\neq \h_s^{-,\hat \t}\)\le
  \sum_{x\in \L}\bigl[\bbP(\h^{+,\hat \t}_s(x)=+)-\bbP(\h^{-,\,\hat \t}_s(x)=+)
\bigr]\\
  \le |\L|\(\hat d^{\,+}(s) + \hat d^{\,-}(s)\)\le 2|\L|\hat\g(s)
\end{gather}
 and therefore
\begin{equation*}
  \bP\(\hat T_{\rm mix}\ge |\L|\hat t\)\le \bP\(\hat \g(|\L|\hat t\,)\ge \frac{1}{4\nep{}|\L|}\).
\end{equation*}
Thanks to Corollary \ref{th:corollaPW2}, $\hat \g(t)\le \(4\hat\g(t_0)\)^{\inte{t/t_0}}$ so that
  \begin{gather*}
  \bP\(\hat \g(|\L|\hat t\,)\ge \frac{1}{4\nep{}|\L|}\)\le \bP\(\hat \g(\hat t\,)\ge \frac{1}{8}\)
\le 8\bE\(\hat \g(\hat t\,)\).
  \end{gather*}
\end{proof}

Let us remark for later convenience that, exactly like in  \eqref{eq:gathero},
 one proves that
\begin{eqnarray}
  \label{eq:supgamma}
  \sup_\si \|\mu_t^\si-\pi^\tau\|\le 2|\L|\gamma(t).
\end{eqnarray}

With the same notation the following will turn out to be quite useful:
\begin{Corollary}
 \label{th:corollaPW3} 
 Let $R_L\equiv R_L^\gep$ and let $\bP\in \cD(R_L)$. Let also $\D\subset 
\partial R_L$ be such that $L^{3\gep}\le |\Delta|\le 2L^{3\gep}$.
% either the set $\{x=(i,0):\ |i-\frac L2|\le L^{3\gep}\}$ or the set
% $\{x=(i,\ceil{L^{1/2+\gep}}+1):\ |i-\frac L2|\le L^{3\gep}\}$.
Assume that $\bE^\D\(\|\mu_t^\pm -\pi^\t\|\)\le \d$ for every $\bP\in
\cD(R_L)$. Then the statement $\cA(L, t',\d')$ holds true with
$\d'=8\d +\nep{-\nep{8\b L^{3\gep}}}$ and $t'=t\nep{cL^{3\gep}}$ for
some constant $c>0$ independent of $\Delta$ and $\tau_\Delta$.
Analogously $\cA(L, t,\d)$ implies $\bE^\D\(\|\mu_{t'}^\pm
-\pi^\t\|\)\le \d'$. Similar statements hold if we replace $R_L$ by
$Q_L$ and $\cA(L,t',\d')$ by $\cB(L,t',\d')$.
  
\end{Corollary}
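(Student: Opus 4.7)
The plan is to derive both directions of the equivalence from a single application of the previous Lemma, noting that its proof is in fact symmetric under interchange of the roles of $(\bP,\tau)$ and $(\bP^\Delta,\hat\tau)$. Indeed the only input is the two-sided bound $M\le M_\Delta:=\nep{8\beta|\Delta|}$ on the Radon--Nikodym derivative $\pi^\tau/\pi^{\hat\tau}$, and the estimate $T_{\rm mix}\le cM^3|\Lambda|\,\hat T_{\rm mix}$ obtained from it is symmetric in $\tau,\hat\tau$. Given the hypothesis $L^{3\gep}\le|\Delta|\le 2L^{3\gep}$, we have $M_\Delta\in[\nep{8\beta L^{3\gep}},\nep{16\beta L^{3\gep}}]$, and the polynomial prefactor $|R_L|^2$ is absorbed into the exponential, so that $c_0|R_L|^2 M_\Delta^4\le \nep{c L^{3\gep}}$ for suitable constants $c_0,c$ independent of $\Delta$ and $\tau_\Delta$.

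For the forward implication, fix $\bP\in\cD(R_L)$ and apply the previous Lemma:
\begin{equation*}
\bE\bigl(\gamma(t')\bigr)\le\nep{-M_\Delta}+8\,\bE^\Delta\bigl(\hat\gamma(\hat t')\bigr),\qquad \hat t':=\frac{t'}{c_0|R_L|^2 M_\Delta^4}.
\end{equation*}
Choosing $t'=t\,\nep{c L^{3\gep}}$ with $c$ large enough forces $\hat t'\ge t$. Since $\bP\in\cD(R_L)$, the hypothesis gives $\bE^\Delta(\hat\gamma(t))\le \delta$, and the data-processing monotonicity of total variation distance along a reversible Markov chain yields $\bE^\Delta(\hat\gamma(\hat t'))\le \bE^\Delta(\hat\gamma(t))\le \delta$. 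Combining with $\nep{-M_\Delta}\le\nep{-\nep{8\beta L^{3\gep}}}$ (which uses $|\Delta|\ge L^{3\gep}$) and the trivial bound $\|\mu_{t'}^\pm-\pi^\tau\|\le\gamma(t')$ proves $\cA(L,t',\delta')$.

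For the reverse implication, apply the same Lemma with the roles of $\bP$ and $\bP^\Delta$ exchanged to obtain
\begin{equation*}
\bE^\Delta\bigl(\hat\gamma(t')\bigr)\le\nep{-M_\Delta}+8\,\bE\bigl(\gamma(\hat t')\bigr).
\end{equation*}
Since $\bP\in\cD(R_L)$, the hypothesis $\cA(L,t,\delta)$ applies directly to the right-hand side, giving $\bE(\gamma(\hat t'))\le \delta$ as soon as $\hat t'\ge t$; the same choice $t'=t\,\nep{c L^{3\gep}}$ finishes the argument. The $Q_L$ version is identical: only $|Q_L|^2$ (still polynomial) and $M_\Delta$ enter, and both are controlled in exactly the same way.

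The only delicate point is the symmetry of the previous Lemma's proof: one must verify that the estimate $T_{\rm mix}\le cM^3|\Lambda|\,\hat T_{\rm mix}$ remains valid with $\tau$ and $\hat\tau$ exchanged, which is immediate because $M$ bounds $\pi^\tau/\pi^{\hat\tau}$ in both directions. Beyond this observation, what is left is the polynomial-versus-exponential bookkeeping needed to absorb $c_0|R_L|^2 M_\Delta^4$ into $\nep{c L^{3\gep}}$.
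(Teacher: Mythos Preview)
Your argument is correct and is precisely the one the paper has in mind: the Corollary is stated without proof because it follows immediately from the preceding Lemma together with the symmetric bound $M\le M_\Delta=\nep{8\beta|\Delta|}$, and your bookkeeping with $t'=t\,\nep{cL^{3\gep}}$ is the intended one. One tiny slip: from $\bE^\Delta(\hat d^{\pm}(t))\le\delta$ you only get $\bE^\Delta(\hat\gamma(t))\le 2\delta$ (since $\gamma=\max(d^+,d^-)\le d^++d^-$), so strictly speaking your argument yields $\delta'=16\delta+\nep{-\nep{8\beta L^{3\gep}}}$ rather than $8\delta$; this is of course irrelevant for the applications.
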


\section{Recursion on scales: the heart of the proof}
This section represents the key of our results. We will inductively
prove over the sequence of length scales $L_n=2^{n+1}-1$ that the
statement $\cA(L_n,t_n,\d_n)$ and its analog $\cB(L_n,t_n,\d_n)$ hold true
for suitable $t_n,\d_n$ (see Theorem \ref{th:rec} below). In all this
section $\gep>0$ is fixed very small once and for all. Accordingly,
for any $L\in \bbN$, $R_L\equiv R_L^\gep$ and similarly for
$Q_L$. Finally $c,c'$ will denote positive numerical constants whose value may
change from line to line.

First we give a rough estimate which provides the starting point of the recursion:
\begin{Proposition}
\label{th:raf}
  For every $\beta$ there exists $c=c(\beta)$ such that for every 
$L\in\N$ %and $ t>0$ 
the statements 
$
\cA(L,t,e^{-t\,e^{-c L}})
$
and
$
%B(L,\log( L^3)\, e^{L c},0)
\cB(L,t,e^{-t\,e^{-c L}})
$
hold.
\end{Proposition}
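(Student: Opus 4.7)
The plan is to prove Proposition \ref{th:raf} in two steps: first, obtain a crude but uniform lower bound $\gap^\tau_\Lambda \ge e^{-c(\beta)L}$ for the spectral gap in $\Lambda\in\{R_L,Q_L\}$ valid for every boundary condition $\tau$; second, convert this into the claimed total-variation decay via the standard $L^2\to L^1$ estimate combined with the submultiplicativity $\gamma(t+s)\le 4\gamma(t)\gamma(s)$ of Corollary \ref{th:corollaPW2}.

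The spectral gap bound is the only genuinely non-routine step. Both $R_L$ and $Q_L$ have one dimension at most $L+O(1)$ and the other equal to $W:=\lceil L^{1/2+\gep}\rceil$; crucially, for $\gep<1/2$ and $L\ge 1$ one has $W\le L$. I would treat the rectangle as a one-dimensional chain of $L$ columns of height $W$, each column being a ``super-spin'' with $2^W$ possible values and nearest-neighbor (column-to-column) interaction of magnitude $O(\beta W)$. Since any one-dimensional finite-range, finite-alphabet spin system exhibits no phase transition at any temperature, a classical argument on such quasi-1D Ising systems --- either a block-dynamics recursion on the column chain or a transfer-matrix-style Dobrushin condition --- shows that the Glauber spectral gap is bounded below, uniformly in the chain length and in the boundary condition, by a quantity depending only on the single-column state-space size and on the interaction strength. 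For our parameters this yields
\[
\gap^\tau_{R_L},\;\gap^\tau_{Q_L}\;\ge\;e^{-c(\beta)W}\;\ge\;e^{-c(\beta)L}.
\]

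Given the gap bound, the rest is mechanical. The $L^2\to L^1$ bound $\|\mu_t^\pm - \pi^\tau\| \le \tfrac12\,(1/\pi_{\min})^{1/2}\,e^{-t\gap^\tau}$ together with $1/\pi_{\min}\le e^{c\beta|\Lambda|}\le e^{c\beta L^{3/2+\gep}}$ gives $\gamma(t_0)\le 1/8$ at $t_0:=e^{c_1 L}$, with $c_1=c_1(\beta)$ chosen large enough to absorb the polynomial-in-$L$ prefactor into the exponential. Iterating Corollary \ref{th:corollaPW2} then produces $\gamma(n t_0)\le 4^{n-1}\gamma(t_0)^n\le 2^{-n-2}$; using monotonicity of $t\mapsto \gamma(t)$, this upgrades to $\gamma(t)\le e^{-t\,e^{-cL}}$ for all $t\ge 0$ once $c=c(\beta)$ is enlarged to absorb $\log 2$ and the scaling factor $t_0$. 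Taking $\bE$ over random $\tau$ concludes uniformly over $\bP\in\cD(R_L)$ (resp.\ $\cD(Q_L)$), and the case of $Q_L$ is essentially identical to that of $R_L$.

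The main obstacle is therefore the quasi-1D gap bound: the naive volume estimate $\gap\ge e^{-c\beta|\Lambda|}$ would produce only $e^{-c\beta L^{3/2+\gep}}$ in the exponent and would fail the requirement that the constant $c$ in $e^{-cL}$ be independent of $L$. Everything hinges on the ``thin-strip'' geometry $W\le L$ and on the one-dimensional nature of the resulting column chain, which reduces a hard 2D estimate to a soft 1D one.
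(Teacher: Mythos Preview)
Your approach is essentially the same as the paper's: establish a uniform bound $T_{\rm mix}\le e^{cL}$ (equivalently $\gap\ge e^{-cL}$) for the thin rectangles $R_L,Q_L$ via a quasi-1D argument, then convert to the variation-distance decay. The paper simply cites \cite[Corollary 2.1]{cf:M_2D} for the first step and applies \eqref{eq:*} directly for the second; you instead sketch the block/column argument behind the cited result and replace \eqref{eq:*} by the equivalent route $L^2\!\to\!L^1$ plus Corollary~\ref{th:corollaPW2}. One small slip: the prefactor $(1/\pi_{\min})^{1/2}\le e^{c\beta L^{3/2+\gep}}$ is not ``polynomial in $L$'' as you wrote, but since $t_0\,\gap\ge e^{(c_1-c)L}$ grows doubly exponentially once $c_1>c$, this stretched-exponential prefactor is still absorbed and your argument goes through unchanged.
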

\begin{proof} 
From rough estimates on the spectral gap \cite[Corollary 2.1]{cf:M_2D} and \eqref{eq:6}, one has that
\begin{eqnarray}
\label{eq:tmixraf}
T_{\rm mix}\le e^{cL}  
\end{eqnarray}
uniformly in the boundary conditions $\t$ and in $ L\in\N$, both for
$R_L$ and for $Q_L$. Applying \eqref{eq:*} with $\epsilon=1/(2e)$,  the claim is proved.
\end{proof}

\begin{Theorem}
\label{th:rec}
  For every $\beta$ there exist constants $c,c'$ such that:
\begin{enumerate}
\item if $\cA(L,t,\delta)$ holds, then also $\cB(L,2t,\delta_1)$ does,
with 
$$
\delta_1=\delta_1(L,\delta,t)=c\left(
\delta+\nep{-c' L^{2\gep}}+L^2\nep{-c' \log t}\right).
$$
\item If $\cB(L,t,\delta)$ holds, then also $\cA(2L+1,t_2,\delta_2)$ holds,
with 
\begin{eqnarray}
\label{eq:t2}
t_2=t_2(L,t)= \nep{cL^{3\gep}}t
\end{eqnarray}
and
\begin{eqnarray}
\delta_2=\delta_2(L,\delta)=c(\delta+\nep{-c'L^{3\gep}}).
\end{eqnarray}

\end{enumerate}
\end{Theorem}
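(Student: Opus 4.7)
Both parts follow the same paradigm: cover the larger rectangle by two overlapping sub-rectangles at the previous scale, use the Peres--Winkler censoring inequality (Theorem \ref{th:PWgeneralizz}) to run the heat-bath updates sequentially in each one, and invoke the inductive hypothesis at each step. The subtlety is to check at each stage that the random boundary condition fed to the next sub-rectangle belongs to the class $\cD(\cdot)$ on which the inductive hypothesis is defined, and this is where the appendix's equilibrium estimates (cluster expansion at large $\beta$) come in.

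For part (1), $Q_L$ has the same width $L$ as $R_L$ but height larger by a factor $\sim\sqrt{2}$. Split $Q_L=\Lambda_{\text{bot}}\cup \Lambda_{\text{top}}$ with $\Lambda_{\text{bot}}$ flush with the South and $\Lambda_{\text{top}}$ flush with the North, each congruent to $R_L$, overlapping in a central horizontal strip of height $(2-2^{1/2+\epsilon})L^{1/2+\epsilon}$. Starting from $+$, censor the updates to lie in $\Lambda_{\text{top}}$ during $[0,t]$ and in $\Lambda_{\text{bot}}$ during $[t,2t]$. In Phase 1 the N, E, W borders of $\Lambda_{\text{top}}$ are inherited from $Q_L$ (dominated by $\pi^-_\infty$ since $\bP \in \cD(Q_L)$) while the S border is the frozen initial configuration $+$ (trivially dominating $\pi^+_\infty$), so the induced b.c.\ is in $\cD(\Lambda_{\text{top}})$ and $\cA(L,t,\delta)$ gives the marginal on $\Lambda_{\text{top}}$ within $\delta$ of its equilibrium. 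In Phase 2 the N-border of $\Lambda_{\text{bot}}$ is the random configuration produced in the overlap strip by Phase 1. The key equilibrium input from the appendix says that the marginal of $\pi_{\Lambda_{\text{top}}}$ on the overlap strip can be coupled stochastically to $\pi^-_\infty$ up to an event of probability $e^{-c'L^{2\epsilon}}$; this scale reflects Gaussian $\pm$-interface fluctuations of typical size $\sqrt L$, which must reach a distance $\sim L^{1/2+\epsilon}$ from the $+$-boundary at cost $\exp(-c(L^{1/2+\epsilon})^2/L)$. On the coupling event the b.c.\ on $\Lambda_{\text{bot}}$ is in $\cD(\Lambda_{\text{bot}})$, and a second application of $\cA(L,t,\delta)$ brings $\Lambda_{\text{bot}}$ within $\delta$ of its equilibrium. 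The remaining $L^2 e^{-c'\log t}$ term in $\delta_1$ comes from matching the censored dynamics' marginal on the upper non-overlap strip with the corresponding marginal of $\pi_{Q_L}$ (both close to $\pi^-_\infty$), via a perturbation-of-boundary-conditions bound combined with \eqref{eq:supgamma}. The $-$-initial case is handled by reversing the order of the two phases; its second phase involves a rectangle whose b.c.\ are dominated by $\pi^-_\infty$ on all four sides, a single-phase setting where fast log-Sobolev-type mixing suffices.

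For part (2), cover $R_{2L+1}$ by two essentially side-by-side copies of $Q_L$, $\Lambda_{\text{left}}\cup \Lambda_{\text{right}}$, sharing their full height and overlapping in a thin central column. The same paradigm meets a more serious obstruction: the overlap column spans the entire vertical extent of the rectangle, hitting both the $+$-S-boundary and the $\pi^-_\infty$-N-boundary, so no matter which piece we equilibrate first the frozen b.c.\ on the overlap column of the other piece cannot be put into $\cD(Q_L)$ by an interface-localization argument. The remedy is Corollary \ref{th:corollaPW3}: perturb $\tau$ on a set $\Delta \subset \partial R_{2L+1}$ of size $|\Delta|\asymp L^{3\epsilon}$, located near the two South corners (where the $+$- and $\pi^-_\infty$-borders meet), replacing it by $-$. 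Under the modified boundary condition the corner singularities are softened enough that the analog of the part (1) two-phase scheme goes through, with $\cB(L,t,\delta)$ in each phase and the same cluster-expansion interface-localization estimate certifying the $\cD(Q_L)$ condition on the frozen overlap. Corollary \ref{th:corollaPW3} then translates the bound back to the original $\tau$, at the price of the $e^{cL^{3\epsilon}}$ factor in $t_2$ and the $e^{-c'L^{3\epsilon}}$ additive error in $\delta_2$.

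The main technical obstacle, in both parts, is that variation-distance closeness of the censored dynamics' output to the appropriate marginal of a $\cD$-b.c.\ equilibrium does not suffice: to feed the inductive hypothesis for the next phase one needs a genuine monotone (stochastic-domination) coupling with $\pi^\pm_\infty$. The large-$\beta$ cluster-expansion estimates of the appendix deliver exactly such couplings, and their quantitative dependence on the rectangle dimensions is what dictates the $e^{-c'L^{2\epsilon}}$ and $e^{-c'L^{3\epsilon}}$ error terms. Once these couplings are in place, one must only propagate the monotonicity $\mu_0/\pi$ increasing (resp.\ decreasing) through each censoring phase, as required by Theorem \ref{th:PWgeneralizz}, and carry out the routine bookkeeping of the error terms.
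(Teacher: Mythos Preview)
Your outline for part (1) is broadly correct---the paper uses the same top/bottom covering of $Q_L$ by two translated copies of $R_L$ and the same phase ordering---but you misidentify the source of the $L^2 e^{-c'\log t}$ term. In the paper this term does \emph{not} arise in the $+$-started dynamics at all; for that case the four terms in the basic decomposition (cf.\ \eqref{FM0}) are bounded by $\delta$, $\delta$, $e^{-cL^{2\gep}}$, $e^{-cL^{2\gep}}$ respectively, using only FKG-type arguments to check that the auxiliary b.c.\ law $\bP^-(\tau,\eta)=\bP(\tau)\pi^{\tau,-}(\eta)$ lies in $\cD(B)$, together with the interface-height estimate (Claim \ref{claim1}). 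The $L^2e^{-c'\log t}$ contribution comes instead from the $-$-started dynamics, and \emph{not} via any ``log-Sobolev-type mixing'': the paper handles the second phase (dynamics in the top rectangle $A$ with b.c.\ $\eta_{A^c}$ sampled from $\pi^\tau$) by a localization trick, comparing with the dynamics in a small box $K_\ell$ of side $\sim\log t$ with $-$ b.c., for which the crude bound $T_{\rm mix}\le e^{c\ell}$ suffices; the $e^{-c\ell}$ error in the comparison (Claim \ref{claim2}) and the union bound over $|A|\le L^2$ sites produce the term. Your description ``b.c.\ dominated by $\pi^-_\infty$ on all four sides'' is not what is happening---the south side of $A$ sees $\eta_{A^c}$, which is \emph{not} dominated by $\pi^-_\infty$.

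For part (2) there is a genuine geometric gap. Two copies of $Q_L$ (each of width $L$) cannot cover $R_{2L+1}$ (width $2L+1$), overlapping or not. The paper uses a three-piece decomposition: a \emph{central} copy $A=Q_L+(\lfloor L/2\rfloor,0)$, two \emph{disjoint} lateral copies $B=Q_L\cup(Q_L+(L+1,0))$, and the single column $C=\{i=L+1\}=R_{2L+1}\setminus B$. Phase~1 runs in $A$ (with frozen $+$ on both the east and west internal boundaries), phase~2 runs as a product dynamics in the two halves of $B$. Consequently the set $\Delta$ of size $\sim L^{3\gep}$ is placed at the \emph{center} of the south border (directly under $C$), not near the corners: its role is to force the two open Peierls contours in $A$ into the ``vertical'' scenario (each joining an upper corner of $A$ to an endpoint of $\Delta$), so that with probability $1-e^{-cL^{3\gep}}$ the column $C$ is screened on both sides by $*$-chains of $-$ spins (Claim \ref{second claim}), and hence $\|\pi_A^{\tau,+}-\pi^\tau\|_C$ is small. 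With $\Delta$ at the corners this screening argument fails entirely, since nothing prevents the single open contour from passing through $C$. Your identification of the obstruction (``frozen overlap column cannot be put into $\cD(Q_L)$'') is also off: in the paper's scheme the first-phase rectangle $A$ has its east and west sides frozen to $+$, and a global spin flip (plus Corollary \ref{th:corollaPW3} to remove the now-$+$ image of $\Delta$) puts the resulting b.c.\ law into $\cD(Q_L)$.
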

Assuming the theorem we deduce the 
\begin{Corollary}
\label{th:corolla}
There exist $c,c'>0$ such that
the following holds.
For every $L\in\{2^{n}-1\}_{n\in\N}$
there exists
%\begin{eqnarray}
%T(L)\le  e^{cL^{5\gep}}
%\end{eqnarray}
%and
\begin{eqnarray}
\Delta(L)\le \exp\left(-c'L^{\gep^2}\right)
\end{eqnarray}
such that
$\cA\(L,t,\Delta(L)\)$ holds for every 
$t\ge e^{cL^{3\gep}}$.
\end{Corollary}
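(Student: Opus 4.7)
The plan is to iterate Theorem \ref{th:rec} from a carefully chosen base scale $L_{n_*}$ up to the target scale $L = L_n \in \{2^k - 1\}$. One complete cycle of Theorem \ref{th:rec} (applying first part (1) then part (2)) takes $\cA(L_k, t_k, \delta_k)$ to $\cA(L_{k+1}, t_{k+1}, \delta_{k+1})$ with $L_{k+1} = 2 L_k + 1$, multiplies the time by $2 e^{c L_k^{3\gep}}$, and transforms the error as $\delta_{k+1} \le C \delta_k + E_k$, where $E_k$ collects the additive contributions $e^{-c' L_k^{2\gep}}$, $L_k^2 t_k^{-c'}$ and $e^{-c' L_k^{3\gep}}$. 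The awkward feature is that the constant $C > 1$ makes the iterated amplification factor $C^{n-n_*}$ polynomial in $L$, so both the seed error and the running $E_k$'s must beat this polynomial by a genuine stretched exponential in $L$.

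I would choose the base scale so that $L_{n_*} \asymp L^{\gep}$. This is essentially forced from both sides: on the one hand, since the iterated error sum turns out to be dominated by the $k = n_*$ contribution $C^{n-n_*} e^{-c' L_{n_*}^{2\gep}}$ and this must be no larger than the target $e^{-c L^{\gep^2}}$, we need $L_{n_*}^{2\gep} \gtrsim L^{\gep^2}$, i.e., $L_{n_*} \gtrsim L^{\gep/2}$; on the other hand, Proposition \ref{th:raf} requires the initialization time $t_*$ needed to bring the seed error down to $e^{-L^{\gep^2}}$ to satisfy $t_* \ge L^{\gep^2} e^{c_0 L_{n_*}}$, and the final time budget $e^{c L^{3\gep}}$ then demands $L_{n_*} \le L^{3\gep}$. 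The choice $L_{n_*} \asymp L^\gep$ comfortably sits inside these constraints (using $\gep^2 < \gep < 3\gep$).

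With $L_{n_*}$ fixed, the rest is routine bookkeeping. I would seed via Proposition \ref{th:raf} at scale $L_{n_*}$ with $t_* = L^{\gep^2} e^{c_0 L_{n_*}} \le e^{c_1 L^\gep}$, obtaining $\delta_* \le e^{-L^{\gep^2}}$, and then iterate Theorem \ref{th:rec} $n - n_*$ times. The time estimate $t_n \le t_* \cdot 2^n \cdot \exp(c \sum_{k<n} L_k^{3\gep}) \le e^{c_2 L^{3\gep}}$ follows from the geometric-sum bound $\sum_{k<n} L_k^{3\gep} = O(L^{3\gep})$ together with $L^\gep \ll L^{3\gep}$. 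Unrolling the error recursion yields $\delta_n \le C^{n-n_*} \delta_* + \sum_k C^{n-k-1} E_k$. The first term is at most $L^{O(1)} e^{-L^{\gep^2}} \le e^{-L^{\gep^2}/2}$ for $L$ large. For the sum, each $E_k$ is dominated by $e^{-c'' L_k^{2\gep}}$ (the $L_k^2 t_k^{-c'}$ contribution is negligible since $\log t_k \ge c_0 L^\gep$ for every $k \ge n_*$), and the function $k \mapsto C^{n-k-1} e^{-c'' L_k^{2\gep}}$ is strictly decreasing in $k$, so the sum is dominated by its $k = n_*$ term, giving $L^{O(1)} e^{-c'' L^{2\gep^2}} \le e^{-c_3 L^{\gep^2}}$. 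Altogether $\delta_n \le e^{-c_4 L^{\gep^2}}$ for $L$ large.

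To finish, I would invoke the standard monotonicity of $\|\mu_t^\pm - \pi^\tau\|$ in $t$ to extend $\cA(L, t_n, e^{-c_4 L^{\gep^2}})$ from the specific $t_n \le e^{c_2 L^{3\gep}}$ produced above to every $t \ge e^{c_2 L^{3\gep}}$, giving the claimed statement with $\Delta(L) = e^{-c_4 L^{\gep^2}}$. The main obstacle, as already emphasized, is the two-sided trade-off pinning down $L_{n_*}$: too small and the amplified error $C^{n-n_*} e^{-c' L_{n_*}^{2\gep}}$ fails to decay; too large and the warm-up time $e^{c_0 L_{n_*}}$ from Proposition \ref{th:raf} spills out of the allotted budget $e^{c L^{3\gep}}$.
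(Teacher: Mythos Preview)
Your proof is correct and follows essentially the same strategy as the paper: choose the base scale $L_{n_*}\asymp L^{\gep}$, seed via Proposition~\ref{th:raf}, iterate Theorem~\ref{th:rec} up to scale $L$, and control the accumulated time and error. The only cosmetic differences are that the paper takes the seed time equal to $e^{cL^{3\gep}}$ (giving a doubly-exponentially small seed error rather than your $e^{-L^{\gep^2}}$) and extends to all $t\ge T(L)$ via the submultiplicativity of Corollary~\ref{th:corollaPW2} instead of the monotonicity of $t\mapsto\|\mu_t^{\pm}-\pi^\tau\|$ (itself an immediate consequence of Theorem~\ref{th:PWgeneralizz}); both variants work equally well.
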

\begin{proof}[Proof.]
Note that if one iterates $j$ times the map $x\mapsto 
2x + 1$ starting from $x = 1$ one obtains 
$2^{j+1} -1=:L_j$. Assume now that $L=L_n$ for some large $n$ and set $n_0:=\lfloor \gep n\rfloor$, so that $(1/c)L^\gep\le L_{n_0}\le c L^\gep$. 
  
From  Theorem \ref{th:rec} one sees that it is possible to choose $c,c'>$ such that 
\begin{eqnarray}
\label{eq:implica}
\cA(L_j,t_j,\delta_j)\Longrightarrow \cA(L_{j+1},t_{j+1},\delta_{j+1})
\end{eqnarray}
 with
\begin{eqnarray}
t_{j+1}=2\,t_j\,\nep{c L_j^{3\gep}}
\end{eqnarray}
and
\begin{eqnarray}
\label{eq:deltaj+1}
\delta_{j+1}=c\left(\delta_j+\nep{-c' L_j^{2\gep}}+
L_j^2\,e^{-c'\log t_j}\right).
\end{eqnarray}
Let  
$$
t_{n_0}\equiv \nep{c L^{3\gep}}
$$
so that, thanks to Proposition \ref{th:raf},
$\cA(L_{n_0},t_{n_0},\delta_{n_0})$ holds with
\begin{eqnarray}
\label{eq:delta0}
\delta_{n_0}=\exp\({-\nep{c L^{3\gep}}}\).
\end{eqnarray}
Then, applying  \eqref{eq:implica} $n-n_0$ times, one obtains the claim 
$\cA(L,T(L),\Delta(L))$ with %\mnote{FM $\D(L)$ !!}
\begin{eqnarray}
T(L):= 2^{n-n_0}\nep{c\sum_{j=n_0}^{n}L_j^{3\gep}}\le
\nep{ c L^{3\gep}}
\end{eqnarray}
and
\begin{eqnarray}
  \Delta(L)\le L^{c}\left[
\delta(n_0)+\left(
\nep{-c' L_{n_0}^{2\gep}}+\,\nep{-c'\log (t_{n_0})}
\right)
\right]\le  \nep{-c L^{\gep^2}},
\end{eqnarray}
for a suitable constant $c$, where we used the rough bound (cf. \eqref{eq:deltaj+1})
\begin{eqnarray}
  \delta_{j+1}\le c\left(\delta_j+
\nep{-c'L_{n_0}^{2\gep}}+L^2\,\nep{-c'\log (t_{n_0})}\right).
\end{eqnarray}
The statement for every $t\ge T(L)$ then follows from Corollary \ref{th:corollaPW2}.
\end{proof}

\subsection{Proof of Theorem \ref{th:rec}: part (1)}
\label{sec:stat1}
\\
{\bf i)} We begin by proving that for every distribution $\bP\in \mathcal
D(Q_L)$ one has
\begin{eqnarray}
\bE\(\|\mu^+_{2t}-\pi^\t\|\)\le \delta_1.  
\end{eqnarray}
Observe that $Q_L$ can be seen as the union of two overlapping
rectangles $A$ and $B$, where $B$ is just the basic rectangle $R_L$
and  $A$  is obtained by shifting $B$ to the North by $\lceil
(2L+1)^{1/2+\gep}\rceil- \lceil L^{1/2+\gep}\rceil$ (see Figure
\ref{fig:Q}).
\begin{figure}[t]
\centering
\includegraphics[width=1.0\textwidth]{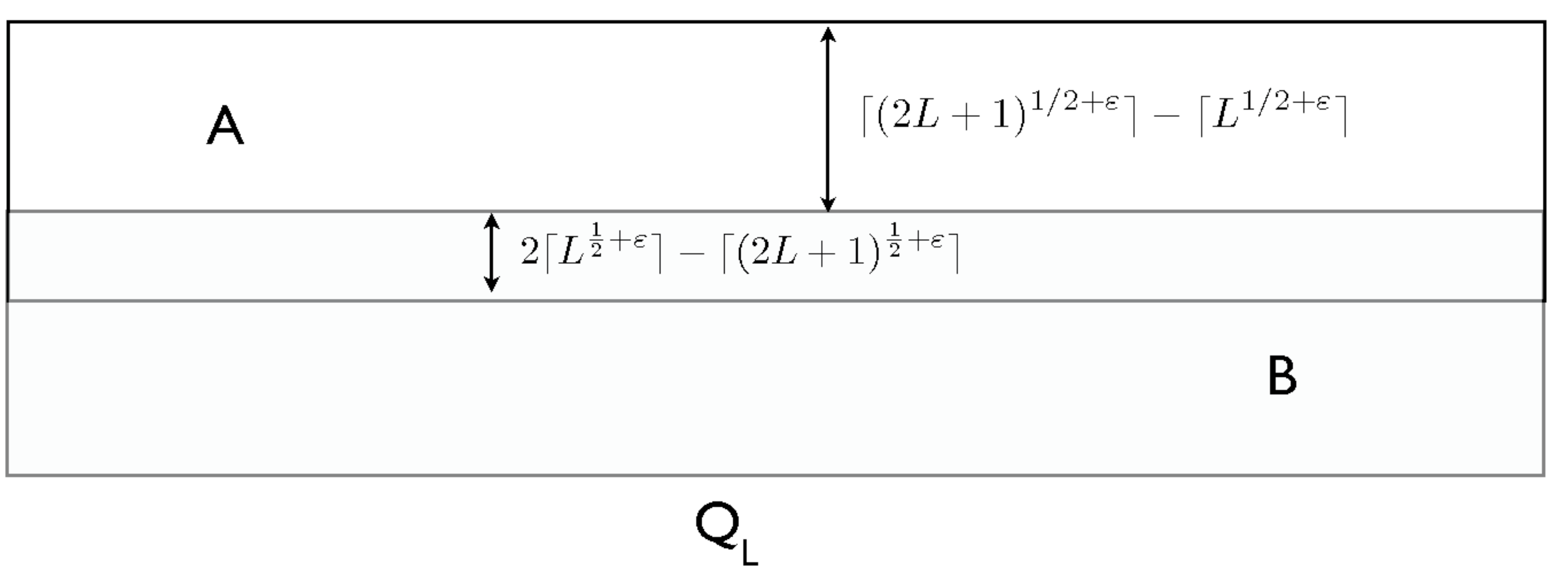}
\caption{$Q_L$ and its
covering with the rectangles $A,B$}
\label{fig:Q} 
\end{figure}
% Observe that $R^\gep_{2L+1}$ can be seen as the union of three overlapping
% rectangles (see figure \ref{fig:overlapping}), $r^{(i)}_L,i=1,2,3$ , obtained by translations of $Q^\gep_L$:
% $r^{(1)}_L = Q^\gep_L$, $r^{(2)}_L$ is $Q^\gep_L$ translated eastward by
% $\lfloor L/2\rfloor$ and $r^{(3)}_L$ is $Q^\gep_L$ translated eastward by
% $L + 1$. Let $C^{(1)}_L := R^\gep_{2L+1}\setminus (r^{(1)}_L \cup r^{(3)}_L
% )$ and let $C^{(2)}_L , C^{(3)}_L$ be the west and east borders of
% $r^{(2)}_L$ . Observe that $C^{(i)}_L$ are vertical columns of width
% $1$ and height $\lceil(2L + 1)^{1/2+\gep}\rceil$.

Let now $\tilde \mu_{2t}^+$ denote the distribution at time $2t$ of
the dynamics started from the all $+$ configuration and subject to the
following ``massage'': in the time interval $[0,t)$ we keep only the
updates in $A$, at time $t$ we increase all the spins in $B$ to $+1$
and in the interval $(t,2t]$ we keep only the updates in $B$.
\begin{Lemma}
\label{massage}
$$
\|\mu^+_{2t}-\pi^\t\|\le \|\tilde\mu^+_{2t}-\pi^\t\|
$$
\end{Lemma}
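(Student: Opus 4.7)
The plan is to interpolate between $\mu^+_{2t}$ and $\tilde\mu^+_{2t}$ through an intermediate \emph{purely censored} dynamics, and to chain together the Peres--Winkler inequality (Theorem \ref{th:PWgeneralizz}) with monotonicity and Lemma \ref{th:lemma167}. Concretely, let $\mu^{(1)}_{2t}$ be the law at time $2t$ of the dynamics in $Q_L$ started from the all $+$ configuration and obtained by \emph{only} censoring: keep the updates in $A$ during $[0,t)$ and the updates in $B$ during $[t,2t]$, with no other manipulation (i.e.\ $\tilde\mu^+$ without the artificial raising of $B$ to $+1$ at time $t$). Then $\tilde\mu^+_{2t}$ differs from $\mu^{(1)}_{2t}$ only through the deterministic ``lift to $+1$ on $B$'' performed at time $t$.

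First, I would check that the starting law $\mu_0=\delta_+$ satisfies $\mu_0/\pi^\tau$ increasing (trivially: the ratio is $0$ off $+$ and $1/\pi^\tau(+)$ at $+$). Theorem \ref{th:PWgeneralizz} then applies with the sequence of censored regions $(\Lambda_1,\Lambda_2)=(A,B)$ on time intervals $([0,t),[t,2t])$, yielding
\begin{equation*}
\|\mu^+_{2t}-\pi^\tau\|\le\|\mu^{(1)}_{2t}-\pi^\tau\|\qquad\text{and}\qquad \mu^{(1)}_{2t}/\pi^\tau\ \text{is increasing.}
\end{equation*}

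Next I would compare $\mu^{(1)}_{2t}$ with $\tilde\mu^+_{2t}$ using monotonicity of the Glauber dynamics. The two evolutions can be coupled through a common set of Poisson clocks and common uniform variables so that they agree on $[0,t)$. At time $t$ the massage dynamics raises every spin in $B$ to $+1$, producing a configuration coordinate-wise larger than the one driving $\mu^{(1)}$. On $[t,2t]$ both dynamics perform heat-bath updates only inside $B$ with identical external spins on $A\setminus B$ (which are not touched after time $t$ in either case), so the monotonicity of the heat-bath coupling preserves the order. This gives $\mu^{(1)}_{2t}\preceq \tilde\mu^+_{2t}$.

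Combining the two points with Lemma \ref{th:lemma167} applied to $\nu=\mu^{(1)}_{2t}$, $\mu=\tilde\mu^+_{2t}$ (using the increasing ratio $\mu^{(1)}_{2t}/\pi^\tau$ established above) yields $\|\mu^{(1)}_{2t}-\pi^\tau\|\le \|\tilde\mu^+_{2t}-\pi^\tau\|$, and the chain of inequalities proves the claim. The only subtle step is realizing that the ``lift to $+1$ on $B$'' is not itself a heat-bath update and therefore is not directly covered by the censoring theorem; this is precisely why one first applies Theorem \ref{th:PWgeneralizz} to transfer to $\mu^{(1)}_{2t}$ (whose $\pi^\tau$-ratio is increasing) and then uses Lemma \ref{th:lemma167} to absorb the deterministic raising via stochastic domination. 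The rest is bookkeeping about which sites belong to $A$, $B$ or $A\cap B$, and poses no real difficulty.
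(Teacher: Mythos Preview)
Your proof is correct and follows exactly the same route as the paper: the intermediate measure you call $\mu^{(1)}_{2t}$ is the paper's $\hat\mu^+_{2t}$, and the chain Theorem~\ref{th:PWgeneralizz} $\Rightarrow$ increasing ratio $\Rightarrow$ stochastic domination by $\tilde\mu^+_{2t}$ $\Rightarrow$ Lemma~\ref{th:lemma167} is precisely the paper's argument, just spelled out in more detail.
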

\begin{proof}
Let $\hat \mu^+_{2t}$ denote the distribution at time $2t$ of the dynamics started from the all 
$+$ configuration and subject to the following ``censoring'': in the time
interval $[0,t)$ we keep only the updates in $A$ and in the
interval $[t,2t]$ only the updates in $B$. By Theorem \ref{th:PWgeneralizz}, $\frac{\hat \mu^+_{2t}}{\pi^\t}$ is increasing. Moreover $\hat \mu^+_{2t}\preceq \tilde \mu^+_{2t}$ which combined with Lemma \ref{th:lemma167} proves  the result.   
\end{proof}
In order to better organize the notation we need the following: 
\begin{definition}
\label{def:35}
 We let 
\begin{enumerate}[(a)]
%\item $A^c=Q_L\setminus A$ and $B^c=Q_L\setminus B$.
\item $\nu_1$ be the distribution obtained at time $t$ after the first
  half of the ``massage''. Clearly $\nu_1$ assigns zero probability to
  configurations that are not identical to $+$ in $A^c$;
\item $\nu_2^\s$ be the distribution obtained from the second half of
  the censoring starting (at time $t$) from a configuration equal to $+$ in $B$ and to $\s$ in $B^c$. Clearly $\nu_2^\s$ assigns zero probability to
  configurations that are not identical to $\s$ in $B^c$;
\item $\pi_A^{\t,+}:=\pi^\t(\cdot\tc \s_{A^c}=+)$;
\item $\pi_B^{\t,\h}:= \pi^\t(\cdot\tc \s_{B^c}=\h)$;
%on its North border;
\item $\pi^{\t,-}$ (resp. $\pi^{\tau,+}$) be the Gibbs measure in
  $Q_L$ with minus (resp. plus) b.c. on its South boundary and $\t$ on
  the North, East and West borders.
%\item For any measure $\mu$ on $\O_{Q_L}$ and any $V\sset Q_L$ we will write %$\mu|_V$ for its marginal on $\O_V$.
%\item For any $\s\in \O_{Q_L}$ and any $V\sset Q_L$, we write
%  $\s=\s_V\bullet\s_{V^c}$ where $\s_V$ and 
%$\s_{V}$ for the
%  restriction of $\si$ to $V$. %and $V^c$, respectively.
\end{enumerate}
\end{definition}
% \begin{remark}
% With a slight abuse of notation it is sometimes convenient  to consider the Gibbs measures $\pi_1^{\t,+}$ and $\pi_2^{\t,\h}$ as measures on $\O_{Q_{L}}$ (rather than on $\O_{R_1}$ or $\O_{R_2}$) which assign zero probability to configurations not identically equal to $+$ in $S_1$ and to $\h$ in $S_2$ respectively.  \end{remark}
% Moreover for all the above distributions, that sometimes it is
% convenient to regard as probability measures on the whole $\O_{Q_L}$, an
% additional subscript $V$ will indicate their marginal on $V\sset Q_L$.
% For instance, $\pi_2^{\eta}$ assigns zero probability to configurations
% which differ from $\eta$ in $S$.
% Finally we recall that, even if not indicated explicitly, all the above
% distributions depend on $\tau$ and on $\beta$.
With these notations the distribution $\tilde \mu^+_{2t}$ is written as 
\begin{equation*}
  \tilde\mu^+_{2t}(\h)= \nu_1(\h_{B^c})
\nu_2^{\h_{B^c}}(\h).
\end{equation*}
Notice that also the Gibbs measure $\pi^\t$ has a similar expression, namely,
\begin{equation*}
  \pi^\t(\h)= \pi^\t(\h_{B^c})\pi_B^{\t,\h_{B^c}}(\h).
\end{equation*}
Therefore
\begin{gather}
  \frac 12 \sum_\h | \tilde\mu^+_{2t}(\h)-\pi^\t(\h)|\nonumber \\
\le \frac 12 \sum_{\h}
\big |\nu_1(\h_{B^c})-\pi_A^{\t,+}(\h_{B^c})\big|\nu_2^{\h_{B^c}}(\h)
+  \frac 12 \sum_{\h}\big |
\pi_A^{\t,+}(\h_{B^c})\nu_2^{\h_{B^c}}(\h) -\pi^\t(\h)\big|
\nonumber \\
= 
\|\nu_1-\pi_A^{\t,+}\|_{B^c} + \|\g-\pi\|
\label{fm1}
\end{gather}
where 
\begin{equation*}
  \g(\h):=\pi_A^{\t,+}(\h_{B^c})\nu_2^{\h_{B^c}}(\h).
\end{equation*}
Clearly 
\begin{equation*}
  \|\g-\pi\| \le \pi^{\t,-}\bigl(\|\nu_2^{\h_{B^c}} - \pi_B^{\t,\h_{B^c}}\|\bigr) 
+ \|\pi_A^{\t,+}-\pi^\t \|_{B^c} +\|\pi^\t-\pi^{\t,-}\|_{B^c}.
\end{equation*}
In conclusion
%\begin{gather}
%\|\mu_{2t}^+-\pi^\t\|\le
%\|\nu_1-\pi_A^{\t,+}\|_{B^c} + \pi^{\t,-}\bigl(\|\nu_2^{\h_{B^c}} - \pi_B^{\t,\h_{B^c}}\|\bigr) \nonumber \\
%+ \|\pi_A^{\t,+}-\pi^\t \|_{B^c} +\|\pi^\t-\pi^{\t,-}\|_{B^c}
%\label{FM2}
%\end{gather}
%and 
\begin{gather}
  \bE\(\|\mu_{2t}^+-\pi^\t\|\)\le \bE\(\|\nu_1-\pi_A^{\t,+}\|_{B^c}\) \nonumber \\
  +\bE\(\pi^{\t,-}\bigl(\|\nu_2^{\h_{B^c}} - \pi_B^{\t,\h_{B^c}}\|\bigr)\)
  +\bE\(\|\pi_A^{\t,+}-\pi^\t \|_{B^c}\) + \bE\(\|\pi^\t-\pi^{\t,-}\|_{B^c}\).
  \label{FM0}
\end{gather}
By assumption the first term in the r.h.s. of \eqref{FM0} is smaller than $\d$. 
Next we analyze the second term. 
In this case, if we denote the four
boundary conditions around $B$, ordered clockwise starting from the
North one, by $\t_1,\t_2,\t_3,\t_4$, then their distribution $\bP^-$ is
given by
$$
\bP^-(\t_1,\t_2,\t_3,\t_4)= \bP(\t_2,\t_3,\t_4) \bE\(\pi^{\t,-}(\t_1)\tc \t_2,\t_4\).
$$ 
Notice that the marginal of $\bP^-$ on $\t_3$ coincides with that of
$\bP$ and therefore stochastically dominates the corresponding marginal
of $\pi^+_\infty$. It remains to examine the marginal on $(\t_1,\t_2,\t_4)$.
Let $f$ be a \emph{decreasing} function of these variables and observe
that, as a function of the boundary conditions on the North, East and
West sides of $Q_L$, the average $\pi^{\t,-}(f)$ is also
decreasing. Therefore, since $\bP\in \cD(Q_L)$,
\begin{equation}
 \bE^-(f)= \bE\(\pi^{\t,-}(f)\) \ge \pi^-_\infty\(\pi^{\t,-}(f)\)\ge \pi^-_\infty(f)
\label{FM2bis}
\end{equation}
\ie $\bP^-\in \cD(B)$.  Therefore 
\begin{equation*}
  \bE\(\pi^{\t,-}\bigl(\|\nu_2^{\h_{B^c}} - \pi_B^{\t,\h_{B^c}}\|\bigr)\)=\bE^-\(\|\nu_2^{\h_{B^c}} - \pi_B^{\t,\h_{B^c}}\|\)\le \d.
\end{equation*}
The third and the fourth term in \eqref{FM0} can be bounded from above by
essentially the same argument which we now present only for the fourth
term.
Clearly, for any choice of the boundary conditions $\t$, $\pi^{\t,-}\preceq \pi^\t$. Therefore
\begin{equation*}
 \bE\( \|\pi^\t-\pi^{\t,-}\|_{B^c}\) \le \sum_{x\in B^c} \bE\(\pi^\t(\s_x=+)-\pi^{\t,-}(\s_x=+)\).
\end{equation*}
\begin{claim}
\label{claim1}
There exists $c=c(\beta,\gep)>0$ such that 
\begin{eqnarray}
\label{eq:gamma3}
  \bE\(\pi^\t(\s_x=+)-\pi^{\t,-}(\s_x=+)\)\le \nep{-cL^{2\gep}}
\end{eqnarray}
for every $x\in B^c$.
\end{claim}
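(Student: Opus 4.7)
My plan is to use FKG monotonicity to sandwich $\pi^\t$ between $\pi^{\t,-}$ and $\pi^{\t,+}$, and then to bound the resulting difference through a disagreement-coupling argument combined with a low-temperature Peierls/cluster-expansion estimate.

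The first step would exploit that $-1\le \t_y\le +1$ pointwise on the Southern boundary of $Q_L$: the FKG inequality then gives the sandwich $\pi^{\t,-}\preceq \pi^\t\preceq \pi^{\t,+}$, and hence
\[
\pi^\t(\s_x=+)-\pi^{\t,-}(\s_x=+)\;\le\;\pi^{\t,+}(\s_x=+)-\pi^{\t,-}(\s_x=+).
\]
The next step would interpret the right-hand side probabilistically, via the standard monotone coupling $(\s,\s')$ of $\pi^{\t,-}$ and $\pi^{\t,+}$ in which $\s\le \s'$ almost surely: it equals $\Pr[\s_x=-,\s'_x=+]$. On this event $x$ belongs to a connected \emph{disagreement cluster} $D\subset Q_L$. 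The key geometric observation would then be that, since $\pi^{\t,-}$ and $\pi^{\t,+}$ share the same boundary condition $\t$ on the North, East, and West sides, $D$ can touch $\partial Q_L$ only on the Southern side; hence for $x\in B^c$ (which lies at distance $\ge \lceil L^{1/2+\gep}\rceil$ from $S$), the cluster $D$ must have vertical extent of order at least $L^{1/2+\gep}$.

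A standard Peierls / low-temperature cluster-expansion argument, uniform in $\t$ on NEW, should then yield
\[
\Pr[x\in D]\;\le\; C\exp\!\bigl(-c\, d(x,S)\bigr),\qquad c=c(\b)>0,
\]
because the contour enclosing the portion of $D$ inside $Q_L$ (open at the bottom along $S$) must have length at least of order $d(x,S)$. Since $\gep$ is small one has $L^{1/2+\gep}\ge L^{2\gep}$, giving the desired bound $\exp(-cL^{2\gep})$ uniformly in $\t$; taking expectation with respect to $\bP$ would then yield~\eqref{eq:gamma3}.

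The hardest part will be to establish that the Peierls/cluster-expansion estimate is genuinely \emph{uniform} in the random NEW boundary condition: although the disagreement cluster is geometrically constrained to be generated only at $S$, the underlying contour weights still depend on $\t$ through the Gibbs measure. The robust bound needed is precisely of the type that the appendix proves for $\b$ large enough via standard cluster-expansion techniques, exploiting that $\t|_{NEW}$ is stochastically dominated by $\pi^-_\infty$ and therefore effectively behaves like a clean minus-phase boundary.
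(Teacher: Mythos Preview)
Your reduction to the difference $\pi^{\t,+}(\s_x=+)-\pi^{\t,-}(\s_x=+)$ and the observation that the disagreement cluster must reach the South side $S$ are correct. The real gap is the claimed Peierls-type bound
\[
\Pr[x\in D]\le C\exp\bigl(-c\,d(x,S)\bigr).
\]
This estimate is simply false in the present phase-coexistence geometry. Under $\pi^{\t,+}$ (with $+$ on $S$ and essentially $-$ on the other sides) there is always an open Peierls contour separating the plus region near $S$ from the minus bulk; the disagreement region between $\pi^{\t,-}$ and $\pi^{\t,+}$ is essentially the whole region below this interface. Since the interface behaves like a random-walk bridge of length $L$ pinned at height $0$ at both ends, its typical height is of order $L^{1/2}$, so for a site $x$ at height $h\ll L^{1/2}$ one has $\Pr[x\in D]$ close to $1$, not exponentially small in $h$. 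The correct tail is Gaussian in $h/L^{1/2}$, i.e.\ $\Pr[x\in D]\lesssim \exp(-c\,h^2/L)$, which for $h\approx L^{1/2+\gep}$ gives exactly $\exp(-cL^{2\gep})$ and nothing better. Your proposed bound $\exp(-c L^{1/2+\gep})$ is strictly smaller than the true probability, so the inequality cannot hold, and no uniform-in-$\t$ Peierls argument will produce it: the excess length of a contour reaching height $h$ over the minimal one is of order $h^2/L$, not $h$.

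The paper avoids this trap by a different route: it conditions on the event $\Gamma$ that a $*$-connected chain of minuses crosses $B$ horizontally, so that $\pi^\t(\s_x=+)-\pi^{\t,-}(\s_x=+)\le \pi^\t(\Gamma^c)$; it then uses the stochastic domination $\bP\preceq\pi^-_\infty$ on the NEW boundary \emph{inside the expectation} (not pointwise in $\t$) together with an enlargement to reduce to a clean $\pi^{-,-,+,-}_{E_L(Q_L)}$ measure, and finally invokes the appendix estimate \eqref{eq:primaeq} on the probability that the single open contour climbs to height $\ge L^{1/2+\gep}$. That last step is precisely the random-walk/surface-tension computation giving $\exp(-cL^{2\gep})$, and it cannot be replaced by a naive Peierls bound.
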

\begin{proof}[Proof]
  Let $\Gamma$ denote the event that in $B$ there is a $*$-connected
  chain (\ie either the Euclidean distance between two consecutive vertices $v,v'$ of the chain equals $1$, or it equals $ \sqrt{2}$
and in that case the segment $vv'$ forms an angle $\pi/4$ with the
horizontal axis) of $-$ spins which connects the East and West sides of
  $B$. By monotonicity,
\begin{eqnarray}
\label{eq:gamma2}
  \pi^\t(\sigma_x=+\tc \G) \le \pi^{\t,-}(\s_x=+)
\end{eqnarray}
and therefore 
\begin{equation*}
  \pi^\t(\sigma_x=+) - \pi^{\t,-}(\s_x=+)\le \pi^\t(\G^c).
\end{equation*}
By monotonicity 
\begin{equation*}
  \bE\(\pi^\t(\s_x=+)-\pi^{\t,-}(\s_x=+)\)\le \bE \pi^\t(\Gamma^c)\le \pi^-_\infty\(\pi^{\t,+}(\Gamma^c)\)
\end{equation*}
where we recall that the superscript $+$ means that on the South
border of $Q_L$ the b.c. are all plus.
\begin{figure}[h]
\begin{center}
% \leavevmode
% \epsfysize =4 cm
% \psfragscanon
% \psfrag{L}[c]{$L$}
% \psfrag{+}[c]{$+$}
% \psfrag{-}[c]{$-$}
% \psfrag{R}[c]{$R_2$}
% \psfrag{R'}[c]{$R_1$}
% \epsfbox{Enlarg.eps}
\includegraphics[width=.6\textheight]{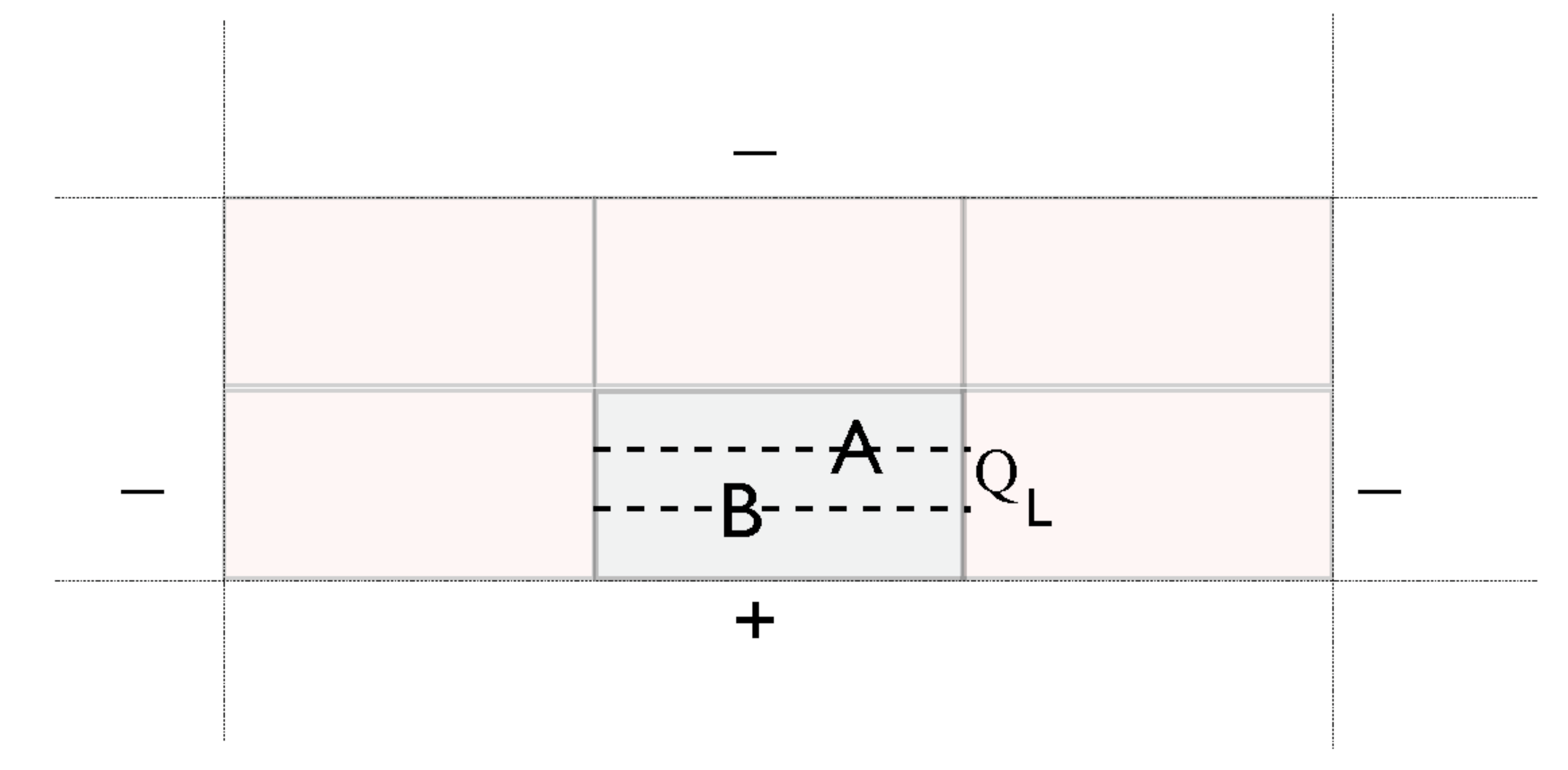}
\end{center}
\caption{The rectangle $Q_L$ (thick line) and its
enlargement $E_L(Q_L)$ (narrow line), with the b.c. of $\pi_\infty^{(-,-,+)}$}
\label{fig:enlargement} 
\end{figure}
Let $\pi_\infty^{(-,-)}$ be the the minus phase measure
$\pi^-_\infty$ conditioned to have all minuses on the 
%horizontal line $L$ sites
%above $Q_L$ and on the two vertical lines $L$ sites to the right and to
%the left of $Q_L$ 
North, East and West borders of the enlarged rectangle $E_L(Q_L)$
(see Figure \ref{fig:enlargement}). Standard bounds on the exponential decay of
correlations in the minus phase (see for instance
\cite{cf:Martinlof} or \cite[Chapter V.8]{cf:Simon})
prove that
\begin{equation}
\label{eq:standbo}
  \pi^-_\infty\(\pi^{\t,+}(\Gamma^c)\)\le \pi_\infty^{(-,-)}\(\pi^{\t,+}(\Gamma^c)\) + \nep{-cL}
\end{equation}
for some constant $c>0$. If we now add extra plus b.c. on the whole horizontal
line containing the South boundary of $Q_L$ and denote by $\pi_\infty^{(-,-,+)}$
the corresponding Gibbs measure then, by monotonicity and DLR equations,
we obtain
\begin{equation}
\label{eq:standbo2}
  \pi_\infty^{(-,-)}\(\pi^{\t,+}(\Gamma^c)\) \le \pi_\infty^{(-,-,+)}\(\pi^{\t,+}(\Gamma^c)\) =\pi^{(-,-,+)}_\infty\(\Gamma^c\). 
\end{equation}
Notice that $\pi_\infty^{(-,-,+)}$ is nothing but the Gibbs measure $\pi^{-,-,+,-}_{E_L(Q_L)}$ in the
rectangle $E_L(Q_L)$ of Figure \ref{fig:enlargement},
%
%consisting of $Q_L$ together its North, North-East,
%East, West and North-West copies 
with $+$ b.c on the South border and $-$
b.c. on the rest of the boundary. 

Next, note that the event $\Gamma^c$ implies that 
%Let $\hat \G$ be the event that the
the  unique open Peierls contour $\g$ (see definition in 
Appendix \ref{sec:equilibrio}) 
%enters $R_1$. 
crosses the horizontal line containing the South border of $A$, and we will
%Again
%standard bounds on decay of correlations with \emph{homogeneous boundary
%  conditions} show that
%\begin{equation*}
%\pi^{-,-,+,-}_{E_L(Q_L)}\(\Gamma^c\tc \hat \G\)\le \nep{-c L^{1/2+\gep}}.
%\end{equation*}
%In conclusion 
%\begin{equation*}
%\pi^{-,-,+,-}_{E_L(Q_L)}\(\Gamma^c\)\le \pi^{-,-,+,-}_{E_L(Q_L)}\(\hat \Gamma^c\) + \nep{-c L^{1/2+\gep}}.
%\end{equation*}
prove in Appendix \ref{sec:primaeq} that
\begin{eqnarray}
  \label{eq:primaeq}
\pi^{-,-,+,-}_{E_L(Q_L)}\(\gamma \mbox{ reaches the height of the South border of  } A\) \le\nep{-c L^{2\gep}}.
\end{eqnarray}
The intuition for \eqref{eq:primaeq} is that the open contour $\g$
behaves like a one-dimensional simple random walk starting at the origin and
conditioned to stay positive and to return at time $L$ to the origin:
the probability that before this time it goes at distance of order
$L^{1/2+\gep}$ from the origin is smaller than $\exp(-cL^{2\gep})$.
\end{proof}

Altogether we have obtained 
\begin{eqnarray*}
  \bE\|\mu_{2t}^+-\pi^\tau\|\le 2\delta+e^{-cL^{2\gep}}.
\end{eqnarray*}
\\
{\bf ii)} Now we consider the dynamics started from the all $-$ configuration
and we prove 
\begin{eqnarray}
  \label{eq:distdameno}
  \bE\|\mu^-_{2t}-\pi^\t\|\le \delta_1.  
\end{eqnarray}
By Theorem \ref{th:PWgeneralizz}, $\|\mu^-_{2t}-\pi^\t\|\le
\|\tilde\mu^-_{2t}-\pi^\t\|$ where this time $\tilde \mu^-_{2t}$
denotes the distribution at time $2t$ obtained by starting the Glauber
dynamics from the minus initial condition and performing the following
``massage'' (the reverse of the previous one): in the time interval
$[0,t)$ we keep only the updates in $B$, at time $t$ we reset to $-$
all the spins in $A$ and in the time interval $[t,2t]$ we keep only
the updates in $A$.  In order to keep the notation as close as
possible to that of the previous case where the starting configuration
was all pluses we redefine
\begin{definition}\ 
  \label{def:37}
  \begin{enumerate}[(a)]
  \item $\pi_B^{\t,-}=\pi^\t(\cdot \tc \s_{B^c}=-)$;
  \item $\pi_A^{\t,\h}=\pi^\t(\cdot \tc \s_{A^c}=\h)$;
  \item $\nu_1$ is the distribution obtained after time $t$ and $\nu_2^{\s}$ is that obtained in the second time lag $t$  starting from the configuration equal to $-$ in $A$ and to $\s$ in $A^c$.    
  \end{enumerate}
\end{definition}
With these notations the same computation leading to \eqref{FM0} gives
\begin{equation}
  \label{FM5}
  \bE \|\tilde\mu^-_{2t}-\pi^\t\| \le
  \bE\|\nu_1-\pi_B^{\t, -}\|_{A^c} + \bE\pi^\t\bigl(\|\nu_2^{\h_{{A^c}}} - \pi_A^{\t,\h_{A^c}}\|\bigr) 
  + \bE\|\pi_B^{\t,-}-\pi^\tau \|_{A^c}.
\end{equation}
The first and third in the r.h.s of \eqref{FM5} are smaller than $\d$
and $\nep{-cL^{2\gep}}$ respectively by essentially the same arguments as before.
It remains to analyze the second term. Notice that 
\begin{gather*}
  \pi^\t\bigl(\|\nu_2^{\h_{{A^c}}} - \pi_A^{\t,\h_{A^c}}\|\bigr)\le
  \sum_{x\in A}\pi^\t\left[\nu_2^{\h_{A^c}}(\s_x=-)-
\pi^\t\(\pi_A^{\t,\h_{A^c}}(\s_x=-)\)\right]\\
  =\sum_{x\in A}\pi^\t\left[\nu_2^{\h_{A^c}}(\s_x=-)-\pi^\t(\s_x=-)\right].
\end{gather*}
Given $x\in A$ and $\ell\in\N$, let $K_\ell$ be the 
intersection of $A$ with a square of side $2\ell+1$, centered 
at $x$. Monotonicity implies that
\begin{eqnarray}
  \nu_2^{\h_{A^c}}(\s_x=-)\le \nu_{2,\ell}^{\h_{A^c}}(\si_x=-),
\end{eqnarray}
where $\nu_{2,\ell}^{\h_{A^c}}$ denotes the distribution at time $t$
obtained by the dynamics in $K_\ell$, started from all $-$, and with
b.c. which are all $-$ except on $\partial K_\ell\cap \partial A$
where the b.c. remain either $\tau$ (on the North, East and West
border of $A$) or $\h_{{A^c}}$ (on the South border of $A$).  Let
$\pi_{\ell}^{\t,\h_{A^c}}$ be the equilibrium measure of this
restricted dynamics. Then,
\begin{gather*}
\nu_2^{\h_{A^c}}(\s_x=-)-\pi^\t(\s_x=-)\nonumber \\
\le
 \left[ \nu_{2,\ell}^{\h_{A^c}}(\si_x=-) -\pi_{\ell}^{\t,\h_{A^c}}(\si_x=-)\right] 
+\left[\pi_{\ell}^{\t,\h_{A^c}}(\si_x=-)-\pi^\t(\si_x=-)\right]
  \\\nonumber
  \le e^{-t e^{-c \ell}}+
\left[\pi_{\ell}^{\t,\h_{A^c}}(\si_x=-)-\pi^\t(\si_x=-)\right],
\end{gather*}
where in the last inequality we used \eqref{eq:tmixraf}. If we now average first with respect to  $\pi^\t$ and then with respect to $\bP$ we claim that
\begin{claim}
\label{claim2}
On has for some $c>0$
\begin{gather}
\label{eq:dadim1}
\bE\(\pi^\t \bigl(\pi_{\ell}^{\t,\h_{A^c}}(\si_x=-)\bigr) - \pi^\t(\si_x=-)\)  \\
= \bE\(\pi^\t \bigl[\pi_{\ell}^{\t,\h_{A^c}}(\si_x=-) -
\pi_A^{\t,\h_{{A^c}}}(\si_x=-)\bigl]\) \le e^{-c\ell}.
\end{gather}
\end{claim}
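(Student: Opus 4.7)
\smallskip\noindent
\emph{Proof plan for Claim \ref{claim2}.} The equality in \eqref{eq:dadim1} is an instance of the DLR consistency condition: since $\pi_A^{\tau,\eta_{A^c}}$ is by definition the conditional law of $\pi^\tau$ given the configuration on $A^c$, one has $\pi^\tau(\sigma_x=-) = \pi^\tau[\pi_A^{\tau,\eta_{A^c}}(\sigma_x=-)]$, and a further expectation under $\bE$ closes the equality.

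For the inequality, the plan is first to reduce, via a Holley monotone/disagreement coupling, to the probability of a long $*$-connected chain of $+$-spins, and then to bound this probability via the classical low-temperature Peierls / cluster-expansion estimate in the Ising minus phase. In detail, conditional on $\omega:=\sigma|_{A\setminus K_\ell}$ sampled from $\pi_A^{\tau,\eta_{A^c}}$, the law of $\sigma|_{K_\ell}$ is the Gibbs measure $\mu_\omega$ on $K_\ell$ with BCs $\omega$ on $\partial K_\ell\cap\mathrm{int}(A)$ and $\tau,\eta_{A^c}$ on $\partial K_\ell\cap\partial A$; by definition $\pi_\ell^{\tau,\eta_{A^c}}=\mu_{-}$ and $\mu_{-}\preceq \mu_\omega$ by FKG. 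In the Holley coupling $(\xi,\zeta)$ with $\xi\le \zeta$, $\xi\sim \mu_{-},\zeta\sim\mu_\omega$, the disagreement set $\{\xi<\zeta\}$ (on which $\zeta\equiv +1$) decomposes into $*$-connected components each of which must touch $\partial K_\ell\cap\mathrm{int}(A)$, the only part of the boundary where the two sets of BCs differ. This gives
\begin{equation*}
\pi_\ell^{\tau,\eta_{A^c}}(\sigma_x=-)-\pi_A^{\tau,\eta_{A^c}}(\sigma_x=-)\le \pi_A^{\tau,\eta_{A^c}}\bigl(x\text{ is $*$-connected to }\partial K_\ell\cap\mathrm{int}(A)\text{ by $+$-spins in }K_\ell\bigr),
\end{equation*}
and averaging over $\eta_{A^c}\sim \pi^\tau$ and $\tau\sim\bP$, and using DLR once more, reduces the left-hand side of the inequality in \eqref{eq:dadim1} to
\begin{equation*}
\bE\,\pi^\tau\bigl(\exists\text{ a $*$-connected chain of $+$-spins in }K_\ell\text{ from }x\text{ to }\partial K_\ell\cap\mathrm{int}(A)\bigr).
\end{equation*}

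Such a chain has length at least $\ell$ and the event is increasing. Exactly as in the proof of Claim \ref{claim1}, FKG together with $\bP\in\mathcal D(Q_L)$ bounds this $\bE\pi^\tau$-probability by the probability of the same event under the reference measure $\pi^{-,-,+,-}_{E_L(Q_L)}$ of Figure \ref{fig:enlargement}. Because $A$ is shifted so that its whole extent lies at vertical distance of order $L^{1/2+\gep}\gg L^{2\gep}$ above the South boundary of $Q_L$, the open-contour estimate \eqref{eq:primaeq} ensures that with probability at least $1-e^{-cL^{2\gep}}$ the region $A$ sits entirely in the pure minus phase. The classical low-temperature Peierls / cluster-expansion bound then estimates the probability of a $*$-connected $+$-chain of length $\ell$ starting at $x$ by $C\ell^{O(1)}e^{-c\ell}$, yielding the desired $e^{-c\ell}$ bound for $\beta$ large enough. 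The main technical obstacle is carrying out the Peierls estimate in this averaged, finite-volume setting, most delicately when $x$ sits near the South border of $A$, where the comparison to $\pi^{-,-,+,-}_{E_L(Q_L)}$ relies essentially on \eqref{eq:primaeq} for the confinement of the open contour; but the required ingredients are of exactly the same cluster-expansion flavor as those already developed in the appendix.
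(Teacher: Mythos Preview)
Your overall plan coincides with the paper's: reduce the difference to the $\pi^\tau$-probability of a connectivity event inside $K_\ell$, pass to $\pi_{E_L(Q_L)}^{-,-,+,-}$ via monotonicity exactly as in Claim~\ref{claim1}, replace the South $+$ boundary by $-$ at cost $e^{-cL^{2\gep}}$ (this is where \eqref{eq:primaeq} enters, since the event depends only on spins in $A$), and finish with the pure-phase Peierls bound $\pi^-_\infty(\Gamma^c)\le e^{-c\ell}$.

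The one step that needs repair is your first reduction. You assert that in the Holley coupling of $\mu_-$ and $\mu_\omega$, every $*$-component of the disagreement set $\{\xi<\zeta\}$ must touch the portion of $\partial K_\ell$ where the two boundary conditions differ. This is not a property of monotone couplings in general (nor of the stationary Glauber coupling): agreement of $\xi$ and $\zeta$ on a surrounding circuit forces the two conditional \emph{marginals} inside to coincide, but it does not force the coupled pair to be equal there, so isolated disagreement islands can occur. The paper bypasses coupling entirely and argues directly via FKG: letting $\Gamma$ be the event that a $*$-chain of $-$ spins separates $x$ from $\partial K_\ell\cap A$, one conditions on the outermost such chain to get $\pi_A^{\tau,\eta_{A^c}}(\sigma_x=-\mid\Gamma)\ge \pi_\ell^{\tau,\eta_{A^c}}(\sigma_x=-)$, whence the difference is bounded by $\pi_A^{\tau,\eta_{A^c}}(\Gamma^c)$. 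This is precisely your target inequality (with nearest-neighbor rather than $*$-connectivity of $+$), so your plan goes through once this direct monotonicity argument replaces the coupling step.
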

(It is clear that if $\ell$ is so large that $K_\ell=A$, then
$\pi_\ell^{\tau,\eta_{A^c}}=\pi_A^{\tau,\eta_{A^c}}$ and the left-hand
side of \eqref{eq:dadim1} equals $0$).

Assuming the claim it is now sufficient to 
choose $\ell=\lceil(1/c)(\log t-\log \log t)\rceil$ to conclude that
\begin{eqnarray}
\bE\(\pi^\t\bigl(\|\nu_2^{\h_{{A^c}}} - \pi_A^{\t,\h_{A^c}}\|\bigr)\)
\le L^2e^{-c' \log t}
\end{eqnarray}
for some $c'>0$. 
\qed
\begin{proof}[Proof of Claim \ref{claim2}]
  Let $\G$ be the event that $x$ is separated from $\partial K_\ell\cap
  A$ by a $*$-connected chain of  minus spins. By monotonicity, for any $\h_{{A^c}}$,
\begin{equation*}
  \pi_A^{\t,\h_{{A^c}}}(\si_x=-\tc \G)\ge \pi_{ \ell}^{\tau,\h_{{A^c}}}(\si_x=-)
\end{equation*}
and therefore it is enough to show that
\begin{equation*}
  \bE\(\pi^\t\bigl(\pi_A^{\t,\h_{{A^c}}}(\G^c)\bigr)\)=
\bE\(\pi^\t(\G^c)\)\le \nep{-c\ell}.
\end{equation*}
The rest of the proof is now very similar to that of Claim
\ref{claim1}. Apart from an error $\nep{-cL}$ we can replace
$\bE\(\pi^\t(\G^c)\)$ by $\pi^{-,-,+,-}_{E_L(Q_L)}(\G^c)$, where $\pi^{-,-,+,-}_{E_L(Q_L)}$ is
the Gibbs measure on the enlargement $E_L(Q_L)$ (see again Figure \ref{fig:enlargement} above) with plus
b.c. on the South border and minus b.c elsewhere. In turn, thanks to the
fact that the event $\G^c$ depends only on the spins in $A$, we can
replace $\pi^{-,-,+,-}_{E_L(Q_L)}$ by the Gibbs measure $\pi_{E_L(Q_L)}^{-}$ on the same
region but with homogeneous \emph{minus} b.c. by paying an error smaller
than $\nep{-cL^{2\gep}}$. Finally, again by monotonicity and standard
correlations decay bounds in the pure phase,
\begin{equation*}
  \pi_{E_L(Q_L)}^{-}(\G^c)\le \pi_\infty^-(\G^c)\le \nep{-c\ell}
\end{equation*}
for some $c>0$.
\end{proof}

\subsection{Proof of Theorem \ref{th:rec} part (2)}
\label{sec:stat2}

Thanks to Corollary \ref{th:corollaPW3} and apart from the harmless
rescaling $t\mapsto t'=\nep{cL^{3\gep}}t$ and $\d \mapsto
\d'=c'\d+\nep{-cL^{3\gep}}$ for some constants $c,c'>0$, we can safely
replace the distribution $\bP$ over the boundary conditions outside
$R_{2L+1}$ with the modified distribution $\bP^\D$ (defined in Section \ref{sec:perturbation}), where
$\D=\{(i,0)\in \partial R_{2L+1}; |i-L|\le L^{3\gep}\}$ and the
pinned configuration $\t_\D$ is identically equal to $-1$. In other
words it is enough to prove that $\bE^\D\(\|\mu_{2t'}^\pm-\pi^\t\|\)\le
c\d'$.

\\
{\bf i)} As before we begin with the case where the dynamics in $R_{2L+1} $ is started from all pluses.  
Let now (see Figure \ref{fig:R})
\begin{eqnarray*}
A&=&Q_L+(\inte{L/2},0)\\
B&=&\{Q_L\}\cup\{Q_L+(L+1,0)\}\\ 
C&=&\{(i,j)\in R_{2L+1};\ i=L+1\}.  
\end{eqnarray*}
so that $R_{2L+1}=B\cup C$ and $B\cap C=\emptyset$.
\begin{figure}[th]
\centering
\includegraphics[width=0.9\textwidth]{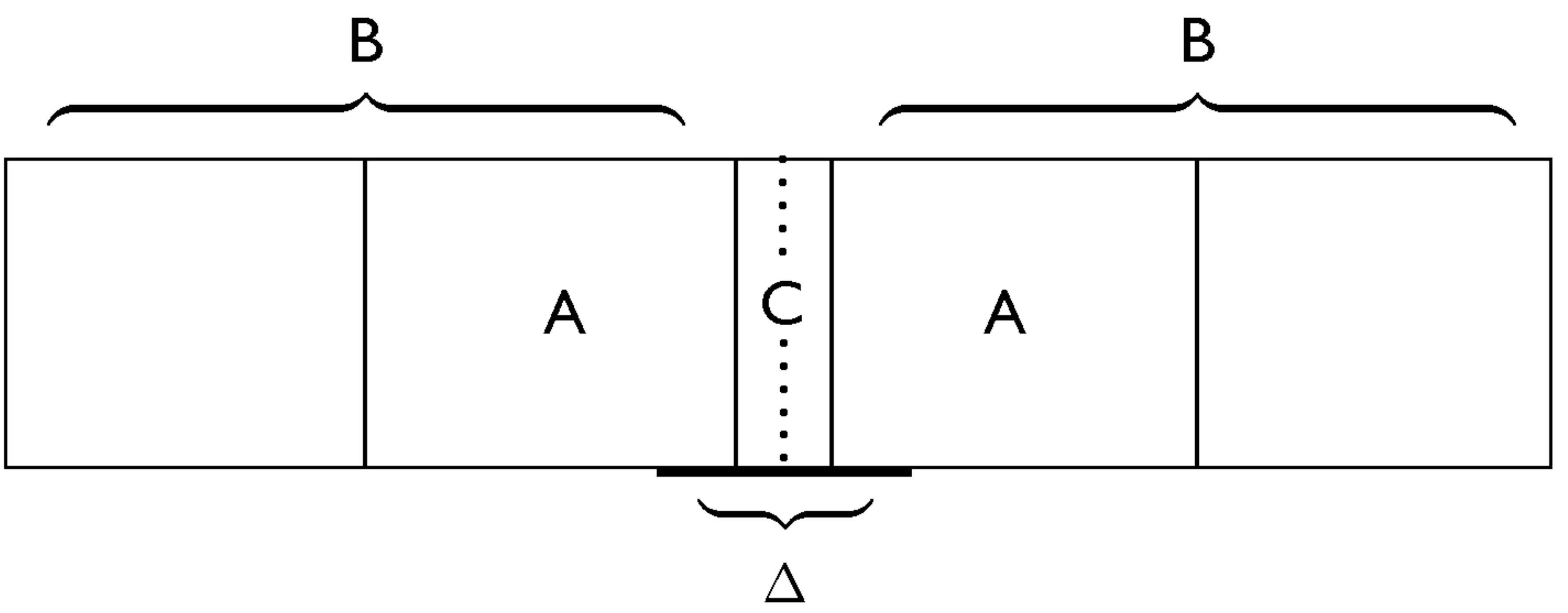}
\caption{$R_{2L+1}$ and its
covering with $A,B,C$. In bold the set $\Delta$
}
\label{fig:R} 
\end{figure}

By Theorem \ref{th:PWgeneralizz}, $\|\mu^+_{2t'}-\pi^\t\|\le
\|\tilde\mu^+_{2t'}-\pi^\t\|$ where, as before, the tilde indicates
that the following ``massage''has been applied: in the time interval
$[0,t')$ we keep only the updates in $A$, at time $t'$ we increase to
$+1$ all the spins in $B$ and in the interval $(t',2t']$ we keep only the
updates in $B$. Notice that the dynamics in $B$ in the time lag
$(t',2t']$ is a just a product dynamics in the two copies of $Q_L$, in
the sequel denoted by $B_1$ and $B_2$, whose union is $B$, with
boundary conditions $\t$ on $\partial B\cap \partial R_{2L+1}$ and
some boundary conditions on $C$ generated by the dynamics in $A$ in
the first time lag $[0,t']$.

\begin{definition} We define
\begin{enumerate}[(a)]
%\item $A^c=R_{2L+1}\setminus A$.
\item $\nu_1$ as the distribution obtained at time $t'$ after the first
  half of the censoring;
\item $\nu_2^\s$ as the distribution obtained from the second half of
  the censoring starting (at time $t'$) from a configuration equal to $\s$ in $C$ and to $+$ in $B$. Clearly $\nu_2^\s$ assigns zero probability to
  configurations that are not identical to $\s$ in $C$;
\item $\pi_A^{\t,+}:=\pi^\t(\cdot\tc \s_{A^c}=+)$ and similarly with $+$ replaced by $-$;
\item $\pi_B^{\tau,\eta_C}:=\pi^\tau(\cdot|\si_C=\eta_C)$;
%on its North border;
\item $\pi^{\t,-}$ (resp. $\pi^{\tau,+}$) as the Gibbs measure in $R_{2L+1}$ with minus (resp. plus) b.c. on its South boundary and $\t$ on the North, East and West borders.
\end{enumerate}
\end{definition}
By proceeding exactly as in the proof of statement {\bf (1)} we get
\begin{gather}
\|\mu_{2t'}^+-\pi^\t\|\le \|\tilde \mu_{2t'}^+-\pi^\t\|\\\le
\|\nu_1-\pi_A^{\t,+}\|_{C} + \pi^{\t,-}\bigl(\|\nu_2^{\h_{C}} - \pi_B^{\t,\h_{C}}\|\bigr) 
+ \|\pi_A^{\t,+}-\pi^\t \|_{C} + \|\pi^{\t,-}-\pi^\t \|_{C} 
\label{FM6}
\end{gather}
and 
\begin{gather}
  \bE^\D\(\|\mu_{2t'}^+-\pi^\t\|\)\le \bE^\D\(\|\nu_1-\pi_A^{\t,+}\|_{C}\) \nonumber \\
  +\bE^\D\(\pi^{\t,-}\bigl(\|\nu_2^{\h_{C}} - \pi_B^{\t,\h_{C}}\|\bigr)\)
  +\bE^\D\(\|\pi_A^{\t,+}-\pi^\t \|_{C}\) + \bE^\D\(\|\pi^\t-\pi^{\t,-}\|_{C}\).
  \label{FM7}
\end{gather}
By assumption and thanks to Corollary \ref{th:corollaPW3}, if we
perform a global spin flip we see that the first term in the r.h.s. of
\eqref{FM7} is smaller than $\delta'$.  As far as the second term is
concerned we observe that the distribution $\bP^{\D,-}$ of the
boundary conditions $(\t,\h_C)$ given by
$\bP^{\D,-}(\t,\h_C)=\bP^\D(\t)\pi^{\t,-}(\h_C)$ coincides with the
$\D$-modification $(\bP^-)^\D$ of
$\bP^-(\t,\h_C)=\bP(\t)\pi^{\t,-}(\h_C)$. The same argument as in
\eqref{FM2bis} shows that the latter belongs to $\cD(B_i)$, $i=1,2$,
so that (via Corollary \ref{th:corollaPW3} and the immediate inequality
$\|\mu\otimes \nu-\mu'\otimes \nu'\|\le \|\mu-\mu'\|+\|\nu-\nu'\|$) the
second term is smaller than $2\d'$.

We now turn to the more delicate third and fourth term in the r.h.s. of \eqref{FM7}. Since they can be treated essentially in the same way we discuss only the third one. As usual we write
\begin{equation}
  \label{FM8}
  \bE^\D\(\|\pi_A^{\t,+}-\pi^\t \|_{C}\) \le \sum_{x\in C} \bE^\D\(\pi_A^{\t,+}(\s_x=+)-\pi^\t(\s_x=+) \).
\end{equation}
Let $\G$ be the event that in $A$ there exist two $*$-connected chains
of minus spins, one to the left and the other to the right of $C$,
connecting the South side of $A$ to its North side.  By monotonicity
\begin{equation*}
  \pi_A^{\t,+}(\s_x=+\tc \G)-\pi^\t(\s_x=+) \le 0 
\end{equation*}
so that
\begin{equation}
  \label{FM9}
  \pi_A^{\t,+}(\s_x=+)-\pi^\t(\s_x=+)\le \pi_A^{\t,+}(\G^c).
\end{equation}
Let now $\bar A=\{(i,j);\ 1\le i\le L,\; 1\le j\le
2\ceil{(2L+1)^{1/2+\gep}}\}$ so that $\bar A$ consists of just two
copies of $A$ stacked one on top of the other. Then, using
monotonicity together with the standard exponential decay of
correlations in the minus phase $\pi^-_\infty$ (see e.g.
\eqref{eq:standbo}) we get
\begin{equation}
  \label{FM10}
  \bE^\D\(\pi_A^{\t,+}(\G^c)\)\le \nep{-c L^{1/2+\gep}} + \pi^{(-,+,\D)}_{\bar A}(\G^c)
\end{equation}
where the superscript $(-,+,\D)$ indicates the b.c. which is $-$ on the union of
the North boundary and $\D$, and $+$ on the rest of $\partial \bar
A$.  The key equilibrium bound we need at this stage is the
following:
\begin{claim}
\label{second claim}
There exists $c>0$ such that $\pi^{(-,+,\D)}_{\bar A}(\G^c)\le \nep{-c
  L^{3\gep}}$.
\end{claim}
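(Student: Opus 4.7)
I would approach the claim via a Peierls/Wulff-type contour analysis, combined with surface-tension estimates standard for the 2D Ising model at large $\b$. Under the boundary conditions $(-,+,\D)$ the boundary $\partial\bar A$ carries exactly four sign changes: at the NW and NE corners (where the $-$ on the North meets the $+$ on the sides) and at the two endpoints of $\D$ (where the $-$ segment $\D$ meets the $+$ on the rest of the South). Consequently each spin configuration in $\bar A$ has two open contours joining these four sign-change points, plus the usual family of closed contours. Since there are only two non-crossing planar ways to pair four points on a curve, there are exactly two competing topologies for the open contours, distinguished by whether or not a single arc connects the two endpoints of $\D$ to each other.

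In the favorable topology~(I), the two open arcs descend from the upper corners down to $\D$ (i.e.\ NW is joined to the left endpoint of $\D$ and NE to the right endpoint). The $-$ region is then a single connected ``chalice''; its intersection with $A$ consists of two ``legs'', one on each side of $C$, each reaching from the South of $A$ (at $\D$) up to the North of $A$, and these legs provide precisely the two $*$-connected $-$ chains demanded by $\G$. In the unfavorable topology~(II), NW and NE are joined by an arc running close to the North of $\bar A$, while the two endpoints of $\D$ are joined by a small closed arc encircling $\D$ at the bottom; the $-$ region then breaks into two disjoint components, neither half of $A$ contains a $-$ chain from S to N, and $\G$ fails.

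The heart of the proof is an energy comparison between the two topologies. The optimal arcs in topology~(I) are approximately straight segments from a top corner to an endpoint of $\D$; using $\sqrt{a^2+b^2}\approx a+b^2/(2a)$ with $a\sim L/2$ and $b\sim 2L^{1/2+\gep}$ (the height of $\bar A$), each such arc has length $L/2+O(L^{2\gep})$, so the total open-contour length in~(I) is $L+O(L^{2\gep})$. In topology~(II) the top arc has length $\approx L$ while the small contour around $\D$ has perimeter at least $\sim 2L^{3\gep}$ (dictated by the horizontal span of $\D$). Since $L^{3\gep}\gg L^{2\gep}$ for small $\gep$, topology~(II) is heavier by at least $c\b\tau_\b L^{3\gep}$ in surface free energy. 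Controlling closed-contour and arc-fluctuation contributions by cluster expansion at large $\b$ (in the same spirit as Claim~\ref{claim1} and the appendix proof of \eqref{eq:primaeq}), one obtains
\[
\pi^{(-,+,\D)}_{\bar A}(\text{topology (II)})\le e^{-cL^{3\gep}},
\]
and hence the claim, since $\G^c$ forces topology~(II) up to boundary-layer corrections absorbed in the constants.

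The main technical obstacle is to check that \emph{typical} arcs in topology~(I), which fluctuate by $O(\sqrt L)$ around their straight-line skeleton, still produce two $-$ legs in $A$ that straddle $C$ and genuinely connect S to N of $A$. This is an Ornstein--Zernike-type fluctuation bound for open Ising contours in the pure phase, in the same flavor as what is used in the appendix to establish \eqref{eq:primaeq}: the fluctuations are entropic and contribute only a polynomial prefactor, which is absorbed into the stretched-exponential gain $e^{-cL^{3\gep}}$.
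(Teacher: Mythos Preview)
Your proposal is correct and follows essentially the same route as the paper: identify the two open-contour topologies, show the ``parallel'' topology (your~II) is suppressed by an excess surface-tension cost of order $L^{3\gep}$, and then control, within the favorable topology (your~I), the probability that an arc overshoots far enough to touch the column $C$. The paper's treatment of that last step---your ``main technical obstacle''---is somewhat more delicate than a bare Ornstein--Zernike fluctuation bound (it passes to an auxiliary $L$-shaped enlargement of the domain to neutralize the entropic repulsion of the open contour from the South boundary before the DKS large-deviation estimate can be invoked), but the overall architecture is exactly the one you outline.
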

Putting together the bounds we got on the various terms in \eqref{FM7}, we have proved
$\bE^\Delta\|\mu^+_{2t'}-\pi^\tau\|\le c \delta'$ as wished.

The proof of the claim is deferred to the appendix but intuitively the
argument goes as follows. Under the boundary conditions $(-,+,\D)$,
for any configuration $\s\in \O_{\bar A}$ there exist exactly two open
Peierls contours $\g_1,\g_2$ with two possible
scenarios:
\begin{enumerate}[(a)]
\item $\g_1$ joins the two upper corners of $\bar A$ and $\g_2$ the two ends of the interval $\D$;
\item $\g_1$ joins the left upper corner of $\bar A$ with the left
  boundary of $\D$ %without touching column $C$ 
and similarly for
  $\g_2$.
\end{enumerate}
If we recall the definition of the surface tension
\eqref{surfacetension}, the ratio between the probabilities of the two
cases is roughly of the form:
\begin{equation*}
  \nep{-\b\t_\b(\vec e_1) (L +2L^{3\gep}) + 2\b\t_\b(\theta)D} 
\end{equation*}
where $D$ is the Euclidean distance between the left upper corner of $\bar A$
and the left boundary of $\D$ and $\theta$ is the angle formed by the
straight line going through these two points with the horizontal axis.
Clearly $\theta \approx O(L^{-\frac 12 +\gep})$ and $D\approx
L/2-L^{3\gep} + O(L^{2\gep})$. Therefore case (b) is much more likely
than case (a).
\begin{remark}
Notice that it is exactly the presence of the positive correction $O(L^{2\gep})$ in $D$ that forced us to take the length of $\D$ to be $L^{3\gep}$. 
\end{remark}
Once we are in scenario (b) the most likely situation is that neither
$\gamma_1$ nor $\gamma_2$ touch $C$ (otherwise they would have an
excess length of order $L^{3\gep}$) and the desired bound follows by
standard properties of the Ising model with homogeneous boundary
conditions.

{\bf ii)} The proof of $\bE^\Delta\|\mu^-_{2t'}-\pi^\tau\|\le c \delta'$ is
identical, modulo the obvious changes, provided that we redefine the
``massage'' of $\mu_{2t'}^- $ as the censoring in $A,B$ plus the
resetting at time $t'$ of the spins inside $B$ to the value $-1$. A minor observation is that in this case, for the smallness of the
term $ \bE^\D\(\|\nu_1-\pi_A^{\t,-}\|_{C}\)$, we do not need anymore
the global spin flip that was necessary for the dynamics started from
all pluses.  \qed
\begin{remark}
\label{scale}
As we said at the beginning, in order to keep the focus on the main
ideas of the method, Theorem \ref{th:rec} has been given in the
restricted setting in which the length scales are of the form
$L_n=2^n-1$. However it should be clear by now that the case of
arbitrary length scales can be dealt with in a very similar way. A
possible solution requires a slight modification of the definition of
the two inductive statements $\cA(L,t,\d), \cB(L,t,\d)$.

Let $\cF_L$ (respectively $\cG_L$) be the class of rectangles which, modulo
translations, have horizontal base $L$ and height $H\in
[L^{\frac 12+\gep}, (2L)^{\frac 12+\gep}]$ (resp. horizontal base $L$ and
height $H\in [(2L)^{\frac 12+\gep}, (4L)^{\frac 12+\gep}]$). Notice
that any rectangle in $\cG_L$ can be written as the union of two
overlapping rectangles in $\cF_L$ such that the width of their
intersection is still $O(L^{1/2+\gep})$ (as in Figure \ref{fig:Q}).
Moreover for any $n$ large enough and any $L\in [L_{n+1},L_{n+2})$
there exists $L'\in [L_n,L_{n+1})$ such that any rectangle $\L$ in
$\cF_L$ can be written as the union of three sets $A,B,C$ (as in
Figure \ref{fig:R}) where $A\in \cG_{L'}$, $B$ consists of two
disjoint rectangles in $\cG_{L'}$ and $C\equiv\L\setminus B$ satisfies
$dist(C,A^c)=O(L)$ and has horizontal width $O(1)$.

We then say that $\cA'(L,t,\d)$ ($\cB'(L,t,\d)$) holds if \eqref{eq:A(L)} is valid for every rectangle in $\cF_L$ (in $\cG_L$).  It is almost immediate to check that part {\bf (1)} of Theorem \ref{th:rec} continues to hold with this new definition. Part {\bf (2)} can be modified as follows. If $\cB'(L',t,\d)$ holds for every $L'\in [L_n,L_{n+1})$ then $\cA'(L,t_2,\d_2)$ holds for every $L\in [L_{n+1},L_{n+2})$ with $t_2=\nep{c2^{3n\gep}}t$ and $\d_2=c(\d+\nep{-c'2^{3n\gep}})$. The proof of the new version is essentially the same as that given above.  \end{remark}

\section{Proof of the main results}
In what follows we will prove Theorem \ref{th:quadrato} and Corollaries \ref{th:cor++} and \ref{main2}. Notice that, for any $\Lambda\sset \bbZ^2$, any boundary conditions $\t$ and any starting configuration $\s$, $\|\mu_t^\s -\pi^\t\|$ is invariant under the global spin flip $\t\mapsto -\t$ and $\s\mapsto -\s$. Therefore it will be enough to  prove only ``half of the statements''.

\subsection{Proof of Theorem \ref{th:quadrato}}
Recall that $$t_L:=\exp(c L^\gep)$$ for some chosen $\gep>0$ small,
and let $\gep':=\gep/4$. We assume throughout this section that
$L\in\{2^n-1\}_{n\in\N}$.

\subsubsection{Mixing time with ``approximately $(-,-,+,-)$'' boundary conditions}
\label{sec:---+}
First we prove \eqref{eq:maintmix00}-\eqref{eq:maintmix0} when the
b.c. $\tau$ is sampled from a law $\bP$ which is dominated by
$\pi_\infty^-$ on the union of three sides of $\L_L$ and dominates
$\pi^+_\infty$ on the remaining side (e.g. the South border). %Of
%course, by spin-flip symmetry one obtains the same statement also when
%$+$ and $-$ are interchanged. 

One sees from
\eqref{eq:supgamma}, the definition \eqref{eq:4} of mixing time and
the Markov inequality that \eqref{eq:maintmix00} implies
\eqref{eq:maintmix0}, so we are left with the task of proving 
\eqref{eq:maintmix00}.
This is an almost straightforward
generalization of the proof of point {\bf (1)} of Theorem \ref{th:rec} and
therefore some steps will be  only sketched.

\medskip
For definiteness, we assume that the $L\times L$
square $\L_L$ we are considering is $\{(x_1,x_2)\in \Z^2:1\le
x_1,x_2\le L\}$.
Consider first the evolution started from the $+$
configuration.  For $i\ge0$ let
\begin{eqnarray}
\label{eq:h}
  h_i:=\left\lceil L^{1/2+\gep'}\right\rceil+i\left(\left\lceil(2L+1)^{1/2+\gep'}\right\rceil
    -\left\lceil L^{1/2+\gep'}\right\rceil
  \right).
\end{eqnarray}
To avoid inessential complications, assume that there exists
$k\in\N$ such that $h_{k-1}=L$. Of course,
\begin{eqnarray}
  \label{eq:k}
k\sim \frac{L^{1/2-\gep'}}{2^{1/2+\gep'}-1}.  
\end{eqnarray}
Let $\L_L^i$ be the rectangle of height $h_i$ whose base coincides
with that of $\L_L$, so that in particular $\L_L^{k-1}=\L_L$. We will
prove by induction at the end of the present section that
\begin{Lemma}
\label{th:lemmaindutt}
The following holds for   $i=0,\ldots,k-1$.
  Let the b.c. $\tau$ around the rectangle $\L_L^i$ be sampled from a law
$\bP$ which dominates $\pi^+_\infty$ 
  on the South border and is dominated by $\pi^-_\infty$  on the union of West, East and North borders. Then,
  \begin{eqnarray}
    \bE\|\mu^{+,i}_{(i+1)t_L/k}-\pi^\tau_{\L_L^i}\|
\le (1+i)e^{-c L^{(\gep')^2}}=(1+i)e^{-c L^{\gep^2/16}},
  \end{eqnarray}
where $\mu^{+,i}_t$ is the evolution in $\L_L^i$ started from $+$, 
$\pi^\tau_{\L_L^i}$ is its invariant measure and $c$ depends only on $\beta$ and 
$\gep$.
\end{Lemma}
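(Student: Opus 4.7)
I propose to prove the lemma by induction on $i$, closely following the censoring--massage strategy of Section \ref{sec:stat1} but with an \emph{asymmetric} decomposition of $\L_L^i$ tailored to the fact that the time budget increases only by $t_L/k$ at each step (rather than by a multiplicative factor). Base case $i=0$: the rectangle $\L_L^0$ is literally $R_L^{\gep'}$, and by hypothesis the law of $\tau$ belongs to $\cD(R_L^{\gep'})$. Since $\gep'=\gep/4$ one has $t_L/k=e^{cL^\gep}/k\gg e^{cL^{3\gep'}}$, which is the time required by Corollary \ref{th:corolla} applied with the parameter $\gep'$ in place of $\gep$; that Corollary therefore yields directly $\bE\|\mu^{+,0}_{t_L/k}-\pi^\tau_{\L_L^0}\|\le e^{-cL^{(\gep')^2}}=e^{-cL^{\gep^2/16}}$.

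For the inductive step $i\ge 1$, I would decompose $\L_L^i=A\cup B$ where $B=\L_L^{i-1}$ and $A$ is a vertical translate of $R_L^{\gep'}$ placed so that its top row coincides with the top row of $\L_L^i$. Because $h_0\sim L^{1/2+\gep'}$ is strictly larger than $\Delta:=h_i-h_{i-1}\sim(2^{1/2+\gep'}-1)L^{1/2+\gep'}$, the two rectangles overlap in a strip of height $h_0-\Delta$ of order $L^{1/2+\gep'}$, which is much larger than the typical interface fluctuation scale $L^{2\gep'}=L^{\gep/2}$. On this decomposition I apply verbatim the massage of Lemma \ref{massage}: only updates in $A$ during $[0,t_L/k)$, then at time $t_L/k$ reset all spins in $B$ to $+1$, then only updates in $B$ during $[t_L/k,(i+1)t_L/k)$. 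The same chain of inequalities that leads from \eqref{fm1} to \eqref{FM0} then gives
\begin{gather*}
  \bE\|\mu^{+,i}_{(i+1)t_L/k}-\pi^\tau_{\L_L^i}\|
  \le \bE\|\nu_1-\pi_A^{\tau,+}\|_{A\setminus B}
  + \bE\,\pi^{\tau,-}_{\L_L^i}\bigl(\|\nu_2^\eta-\pi_B^{\tau,\eta}\|\bigr)\\
  + \bE\|\pi_A^{\tau,+}-\pi^\tau\|_{A\setminus B}
  + \bE\|\pi^{\tau,-}_{\L_L^i}-\pi^\tau\|_{A\setminus B}.
\end{gather*}

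I would then estimate the four terms separately. The first term concerns only the dynamics of Phase 1 in the single rectangle $A\cong R_L^{\gep'}$, whose boundary conditions are $\tau$ on the three outer sides (inherited from $\L_L^i$, hence dominated by $\pi^-_\infty$) and $+$ on the southern side of $A$ (which lies inside $B$ and is not updated during Phase 1); their joint law therefore lies in $\cD(R_L^{\gep'})$, and Corollary \ref{th:corolla} bounds this term by $e^{-cL^{\gep^2/16}}$. The second term is the one where the inductive hypothesis intervenes: as in the derivation of \eqref{FM2bis}, the law of the b.c.\ around $B$ obtained by keeping $\tau$ on the three outer sides and drawing the northern side from $\pi^{\tau,-}_{\L_L^i}$ belongs to $\cD(\L_L^{i-1})$, so the inductive hypothesis at level $i-1$ applied to the dynamics in $B$ (started from $+$ after the reset and run for time $i\,t_L/k$) controls it by $i\,e^{-cL^{\gep^2/16}}$. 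The last two, purely static, terms are handled by Peierls-type estimates in the spirit of Claim \ref{claim1}: $A\setminus B$ lies at distance of order $L^{1/2+\gep'}$ from both the $+$-carrying southern side of $A$ (relevant for $\pi_A^{\tau,+}$) and the $-$-carrying southern side of $\L_L^i$ (relevant for $\pi^{\tau,-}_{\L_L^i}$), and since open interfaces fluctuate vertically only on scale $L^{2\gep'}$, each term is $O(L^2 e^{-cL^{2\gep'}})$, negligible compared to $e^{-cL^{\gep^2/16}}$ for $\gep$ small. Summing the four contributions and absorbing logarithmic corrections into the exponent gives the desired bound $(1+i)\,e^{-cL^{\gep^2/16}}$.

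The main obstacle, as in Section \ref{sec:stat1}, is step 2: the actual distribution of the b.c.\ on the northern side of $B$ at the beginning of Phase 2 is the marginal of $\nu_1\approx\pi_A^{\tau,+}$, which is \emph{not} in $\cD(\L_L^{i-1})$ by itself; the replacement of $\pi_A^{\tau,+}$ by $\pi^{\tau,-}_{\L_L^i}$ is paid for by the equilibrium error terms 3--4, whose smallness crucially requires the overlap strip $h_0-\Delta$ to be much thicker than the interface fluctuation scale. The choice $\gep'=\gep/4$ leaves a comfortable gap between $L^{1/2+\gep'}$ and $L^{2\gep'}=L^{\gep/2}$, which is precisely what makes the induction close.
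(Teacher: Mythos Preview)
Your proposal is correct and follows essentially the same approach as the paper: the paper writes the decomposition as three \emph{disjoint} rectangles $A,B,C$ (top slab, middle strip, bottom) and runs the massage in $A\cup B$ then $B\cup C$, which corresponds exactly to your overlapping pair $(A,B)$ via $A_{\text{you}}=A\cup B$, $B_{\text{you}}=B\cup C$, and the four terms are bounded in the same way (Corollary~\ref{th:corolla} for the first, the inductive hypothesis via the analogue of \eqref{FM2bis} for the second, and Claim~\ref{claim1}-type equilibrium estimates for the last two). One minor slip of language: the vertical fluctuation scale of the open contour is $L^{1/2}$, not $L^{2\gep'}$ --- what is of order $L^{2\gep'}$ is the \emph{exponent} in the tail probability that the contour reaches height $\sim L^{1/2+\gep'}$ --- but this does not affect your argument.
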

If the Lemma holds, it is sufficient to apply it for $i=k-1$
to see that $\bE \|\mu_{t_L}^+-\pi^\tau\|\le \exp(-c L^{\gep^2/16})$ as wished.

\medskip

It remains to show that 
\begin{eqnarray}
\label{eq:-ll}
 \bE \|\mu_{t_L}^--\pi^\tau\|\le e^{-c L^{\gep^2/16}}.
\end{eqnarray}
By Theorem \ref{th:PWgeneralizz} and (the analog of)
Lemma \ref{massage}, $\|\mu^-_{t_L}-\pi\|\le
\|\tilde\mu^-_{t_L}-\pi\|$, where this time $\tilde\mu^-_t $ is the
dynamics in $\L_L$ obtained via the following ``massage'': in the time interval
$[0,t_L/2)$ we keep updates only in $B:=R_L^{\gep'} =\{(x_1,x_2)\in
\L_L:x_2\le \lceil L^{1/2+\gep'}\rceil\}$, at time $t_L/2$ we
set to $-$ all spins in $A:=
\{(x_1,x_2)\in\L_L:x_2>\lceil(1/2)L^{1/2+\gep}\rceil\}$ and in
$(t_L/2,t_L]$ we keep updates only in $A$.  In analogy with 
%definition in order to make as clear as possible the similarity with
Definition \ref{def:37}, we introduce the
\begin{definition} We let
  \begin{enumerate}[(a)]

  %\item $A^c:=\L_L\setminus A$ and $B^c:=\L_L\setminus B$;

  \item $\pi^{\tau,-}_B:=\pi^\tau(\cdot|\si_{B^c}=-);$

  \item $\pi^{\tau,\eta}_A:=\pi^\tau(\cdot|\si_{A^c}=\eta)$;

  \item $\nu_1$ be the distribution obtained at time $t_L/2$;

  \item $\nu_2^\si$ be the distribution obtained at time $t_L$, starting 
at time $t_L/2$ from $\si$ in $A^c$ and from $-$ in $A$.
  \end{enumerate}
\end{definition}

Then, in analogy with \eqref{FM5} one finds
\begin{eqnarray}
\label{eq:triterm}
\bE \|\tilde\mu^-_{t_L}-\pi^\tau\|\le \bE\|\nu_1-\pi^{\tau,-}_B\|_{A^c}+\bE\,\pi^\tau\left(
\|\nu^{\eta_{A^c}}_2-\pi^{\tau,\eta_{A^c}}_A\|
\right)+\bE
\|\pi^{\tau,-}_{B}-\pi^\tau\|_{A^c}.
\end{eqnarray}
From Corollary \ref{th:corolla} one sees
that the first term is smaller than $\exp(-c L^{\gep^2/16})$ (note that
$t_L/2\gg \exp(c L^{3\gep'})$).
%$$
%\left(4e^{-c' L^{{\gep'}^2}}\right)^{e^{cL^\gep}/e^{c L^{5\gep'}}}\ll
%e^{-c' L^{\gep^2}}.
%\le\exp(-cL^{\gep}).
%$$
%(note that $5\gep'=\gep/2<\gep$).  
The last term in \eqref{eq:triterm} can be
bounded by $\exp(-c' L^{2\gep'})$ (the proof is essentially identical to
the proof of the upper bound on the last term in \eqref{FM5}).
Finally, proceeding like for the second term in \eqref{FM5}, one sees
that
 \begin{eqnarray}
  \bE\, \pi^\tau\left(
\|\nu_2^{\eta_{A^c}}-\pi^{\tau,\eta_{A^c}}_A\|
\right)\le L^2e^{-c' \log (t_L/2)}+e^{-c' L^{2\gep'}}\ll e^{-c' L^{\gep^2/16}}.
 \end{eqnarray}
 Altogether, we proved \eqref{eq:-ll} and the proof of
 \eqref{eq:maintmix} is complete.  \qed

\bigskip

{\sl Proof of Lemma \ref{th:lemmaindutt}.}  
Let for simplicity of notation $\pi^\tau:=\pi^\tau_{\L_L^i}$.
For $i=0$ the claim is
just Corollary \ref{th:corolla} (note that  $\L^0_L=R^{\gep'}_L$).  Assume that the
claim holds for $i-1$.  We define the following three disjoint
rectangles (see Figure \ref{fig:Lemma41}): 
\begin{figure}[htp]
\begin{center}
\includegraphics[width=0.7\textwidth]{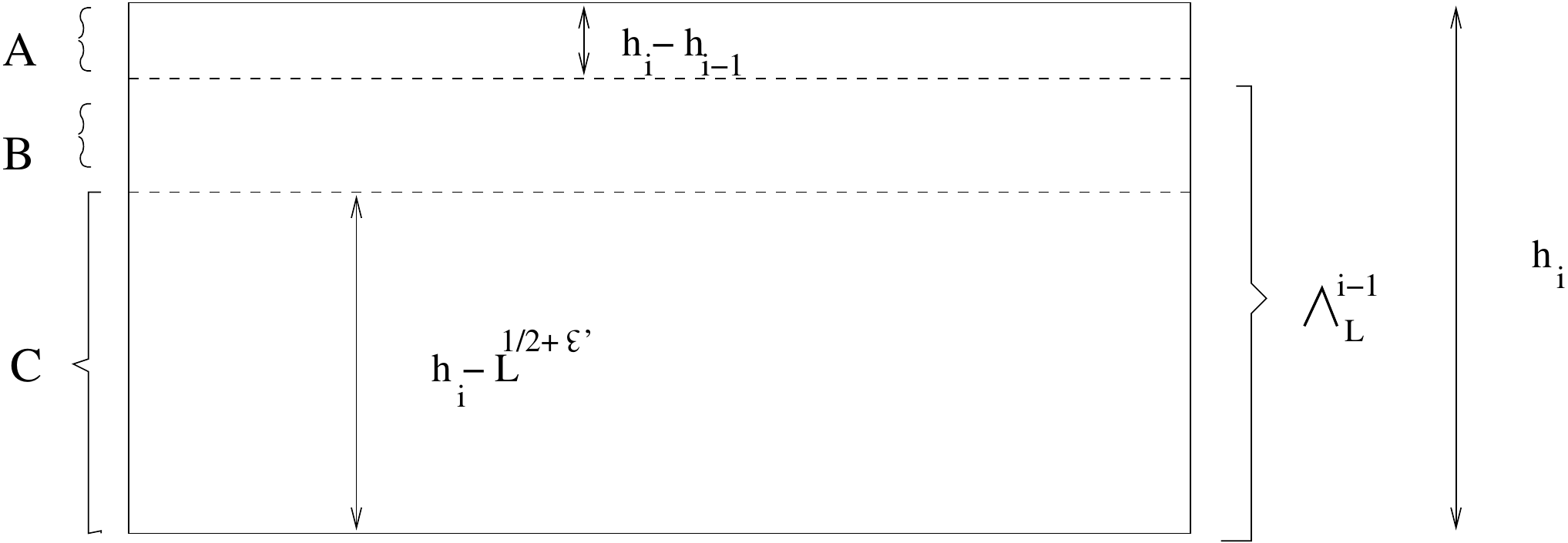}
\end{center}
\caption{The rectangle $\L_L^i$ and its decomposition into $A,B,C$.}
\label{fig:Lemma41} 
\end{figure}
$A:=\L_L^{i}\setminus\L_L^{i-1}$, $C$ is the rectangle
whose South border coincides with that of $\L_L$ and whose height is
$(h_{i}-\lceil L^{1/2+\gep'}\rceil)$, and $B:=\L_L^{i}\setminus(A\cup
C)$.  By Theorem \ref{th:PWgeneralizz} and (the analog of) 
Lemma \ref{massage}, one has
$\|\mu^+_{(i+1)t_L/k}-\pi^\tau\|\le
\|\tilde\mu^+_{(i+1)t_L/k}-\pi^\tau\|$ where the ``massage'' in
$\tilde\mu^+_t$ consists in keeping only the updates in $A\cup B$
in the time interval $[0,t_L/k)$ 
and in $B\cup C$ in the time
interval $(t_L/k,(i+1)t_L/k]$, and setting to $+$ all spins in $B$ at time
$t_L/k$.
In analogy with Definition \ref{def:35}:
\begin{definition}
We let  
\begin{enumerate}[(a)]

\item $\nu_1$ be the distribution obtained at time $t_L/k$, which  assigns zero probability to configurations
which are not all $+$ in $C \cup B$;

\item $\nu^{\si}_2$ be the distribution at time $(i+1)t_L/k$, starting
at time $t_L/k$ from $\si$ in $A$ and from $+$ in $B\cup C$; %$\nu^\si_2$ 
%assigns zero probability to configurations which do not coincide
%with $\si_{r_A}$ in $r_A$;

\item $\pi^{\tau,+}_{A\cup B}:=\pi^\tau(\cdot|\si_{C}=+)$;

\item $\pi^{\tau,\eta}_{B\cup C}:=\pi^\tau(\cdot|\eta_{A}=\eta)$;

\item $\pi^{\tau,-}$  be the Gibbs measure in $\L_L^i$ with $-$ b.c. on its
South border and $\tau$ on the other borders.
\end{enumerate}
\end{definition}
One has then
\begin{eqnarray}
 \tilde  \mu^+_{(i+1)t_L/k}(\eta)=\nu_1(\eta_{A})\nu^{\eta_{A}}_2(\eta)
\end{eqnarray}
and
\begin{eqnarray}
\label{eq:mdt}
  \pi^\tau(\eta)=\pi^\tau
(\eta_{A})\pi^{\tau,\eta_{A}}_{B\cup C}(\eta).
\end{eqnarray}
In analogy with \eqref{fm1}
\begin{gather}
  \nonumber
  \|\tilde\mu^+_{(i+1)t_L/k}-\pi^\tau\|
  \le\|\nu_1-\pi^{\tau,+}_{A\cup B}\|_A+\|\gamma-\pi^\tau\|,
\end{gather}
where 
\begin{equation*}
  \g(\h):=\pi^{\tau,+}_{A\cup B}(\h_{A})\nu^{\eta_{A}}_2(\eta).
\end{equation*}
As a consequence, using \eqref{eq:mdt},
\begin{gather}
  \|\mu^+_{(i+1)t_L/k}-\pi^\tau\|\le
\|\nu_1-\pi^{\tau,+}_{A\cup B}\|_A+
\pi^{\tau,-}\left(\|\nu^{\eta_{A}}_2-\pi^{\tau,\eta_{A}}_{B\cup C}
\|
\right)\\\nonumber+
\|\pi^{\tau,+}_{A\cup B}-\pi^\tau
\|_{A}+
\|\pi^\tau-\pi^{\tau,-}\|_{A}.
\end{gather}
Now we can take the expectation with respect to $\bP$.
First of all, we have
\begin{eqnarray}
  \bE \|\nu_1-\pi^{\tau,+}_{A\cup B}\|_A \le e^{-c L^{\gep^2/16}}
\end{eqnarray}
thanks to Corollary \ref{th:corolla}, because $A\cup B$ is a
translation of the rectangle $R^{\gep'}_L$ which appears in the
definition of the claim $\cA(L,t,\delta)$.  As for the
$\bP$-expectation of the third and fourth terms, it is upper bounded
by $\exp(-c L^{2\gep'})$ (the proof is essentially
identical to that of the upper bound for the third and fourth term in
\eqref{FM0}). %Actually, they are much smaller than that unless
%$i=O(1)$, but such a rough bound is sufficient.  
Altogether, the
average of the sum of the first, third and fourth terms is upper
bounded by $\exp(-c L^{\gep^2/16})$.  Finally, in order to bound the
$\bP$-expectation of the second term we need the inductive hypothesis.
Indeed, we can say that
\begin{eqnarray}
\bE \, \pi^{\tau,-}\left(\|\nu^{\eta_{A}}_2-\pi^{\tau,\eta_{A}}_{B\cup C}
\|
\right)\le i e^{-cL^{\gep^2/16}}
\end{eqnarray}
(which concludes the induction step) if we prove that the marginal on
the union of North, East and West borders of $B\cup C$ of the measure
$\bE^-:=\bE\,\pi^{\tau,-}(\cdot)$ is stochastically dominated by
$\pi^-_\infty$.  Indeed, if $(\tau_1,\tau_2,\tau_4) $ is a generic
spin configuration of the North, East and West borders of $B\cup C$ and $f$
is a decreasing function, using monotonicity a couple of times one gets
\begin{eqnarray}
  \bE^-(f)=\bE\,\pi^{\tau,-}(f)\ge\pi^-_\infty(\pi^{\tau,-}(f))\ge
\pi^-_\infty(f),
\end{eqnarray}
which proves the desired stochastic domination.
\qed

\subsubsection{Mixing time with boundary conditions dominated by $\pi_\infty^-$}
Here we prove \eqref{eq:maintmix00} (and therefore, 
 via Markov inequality and \eqref{eq:supgamma},
we obtain \eqref{eq:maintmix0}), when the law $\bP$ 
of $\tau$ is dominated by $\pi^-_\infty$ (or, by spin-flip symmetry, when
 it dominates $\pi^+_\infty$).

\smallskip

We begin with the evolution starting from the $+$ configuration
and we recall that $\L_L=\{1,\ldots,L\}^2$.
One has by monotonicity $\pi^\tau\preceq \mu^+_t$, and therefore
\begin{eqnarray}
  \bE\|\mu^+_{t_L}-\pi^\tau\|&\le& \cS_1+\cS_2:=\sum_{x\in\L_L^-}
\bE\left(\mu^+_{t_L}(\si_x=+)-\pi^\tau(\si_x=+)
\right)\\\nonumber
&&+\sum_{x \in \L_L^+}
\bE\left(\mu^+_{t_L}(\si_x=+)-\pi^\tau(\si_x=+)
\right),
\end{eqnarray}
where $\L_L^-:=\{(i,j)\in\L_L:j<L/2\}$ and $\L_L^+:=\L_L\setminus \L_L^-$.
We will show that the sum $\cS_1$ is small, and $\cS_2$
 can be dealt with similarly.  

Recall that $\L_L^i$ and
$k$ were defined in Section \ref{sec:---+}, and observe that
$\L_L^{\lfloor (3/4) k\rfloor}$ is a rectangle whose base coincides with
that of $\L_L$, and whose height is $h\sim (3/4)L$ (cf.
\eqref{eq:h}-\eqref{eq:k}). Then, thanks to Theorem
\ref{th:PWgeneralizz} (or actually by monotonicity), we know that $\mu^+_{t_L}\preceq \tilde\mu^+_{t_L}$,
where $\tilde\mu_t^+$ is the censored dynamics in which only updates in
$\L_L^{\lfloor (3/4) k\rfloor}$ are retained.  
One has therefore 
\begin{eqnarray}
\label{eq:cS1}
  \cS_1&\le& \sum_{x\in \L_L^-}
\bE\left(\tilde\mu^+_{t_L}(\si_x=+)-\pi^\tau(\si_x=+)\right)\\\nonumber
&\le &
L^2 \left(
\bE \|\tilde\mu^+_{t_L}-\pi^{\tau,+}\|_{\L_L^-}+
\bE \|\pi^\tau-\pi^{\tau,+}\|_{\L_L^-}
\right),
\end{eqnarray}
where $\pi^{\tau,+}$ is the invariant measure of $\tilde\mu^+_t$, \ie
$$\pi^{\tau,+}:=\pi^\tau\left(\left.\cdot\right|\si_{\L_L\setminus 
    \L_L^{\lfloor (3/4) k\rfloor}}=+\right).$$ Since the North border
of $\L_L^-$ is at distance approximately $L/4$ from the North border of
$\L_L^{\lfloor (3/4) k\rfloor}$, the last term in \eqref{eq:cS1} is
easily seen to be upper bounded by $\exp(-c' L)$ (the proof of this
fact is essentially identical to the proof of the upper bound for the
last two terms in \eqref{FM0}).  As for the first term, Lemma
\ref{th:lemmaindutt} (applied with $i=\lfloor (3/4)k\rfloor$) shows
that it is upper bounded by $\exp(-c' L^{\gep^2/16}$). This is because
the evolution $\tilde\mu_t^+$ sees b.c.  $+$ on the North border of
$\L_L^{\lfloor (3/4) k\rfloor}$, and $\tau$ (sampled from $\bP$ which
is stochastically dominated by $\pi^-_\infty$) on the remaining three
borders.  Altogether, we have shown that
\begin{equation*}
  \bE\|\mu^+_{t_L}-\pi^\tau\|\le e^{-c' L^{\gep^2/16}}.
\end{equation*}

\smallskip

Next, we look at the evolution started from all $-$.  Given a site
$x\in \Lambda_L$ and $\ell\in \N$, let $K_\ell$ be the intersection of
$\L_L$ with a square of side $2\ell+1$ centered at $x$.  We let
$\mu^{\tau,-}_{K_\ell,t}$ be the dynamics in $K_\ell$ with $-$ initial
condition and with b.c. $-$ except on $\partial K_\ell\cap\partial
\L_L$, where the b.c. is $\tau$. The invariant measure of such
dynamics is denoted by $\pi^{\tau,-}_{K_\ell}$.  Since
$\mu^-_t\preceq \pi^\tau$, we have
\begin{eqnarray}
 \bE \|\mu_t^--\pi^\tau\|&\le& \sum_{x\in \Lambda_L}
\left[\bE
\mu_t^-(\si_x=-)-\bE\pi^\tau(\si_x=-)
\right]\\
\label{eq:tmixxi}&\le& \sum_{x\in \Lambda_L}
\left[\bE\left(\mu^{\tau,-}_{K_\ell,t}(\si_x=-)-\pi^{\tau,-}_{K_\ell}(\si_x=-)
\right)
  +e^{-c \ell}\right]\\
\label{eq:daboot}
&\le&\sum_{x\in \Lambda_L}
\left(\bE\,\|\mu^{\tau,-}_{K_\ell,t}-\pi^{\tau,-}_{K_\ell}\|
%
%e^{-\left\lfloor t/T_{{\rm mix},J_\ell}^+\right\rfloor}
+e^{-c \ell}
\right).
\end{eqnarray}
The ``error term'' $\exp(-c\ell)$ comes from comparing $\bE\pi^\tau(\si_x=-)$ and
$\bE \pi^{\tau,-}_{K_\ell}(\si_x=-)$ (see the proof of Claim \ref{claim2} for very similar arguments).
%where $T^+_{J_\ell}$ is the mixing time $T_{mix}(1/(2e))$ for the dynamics 
%$\mu^{+,\ell}_t$. 
We know from \cite[Corollary 2.1]{cf:M_2D} that $T_{{\rm
    mix},K_\ell}^{\tau,-}\le e^{c\ell}$, uniformly in $\tau$.
Therefore, from 
%In the second inequality we used that $\mu^+_{t,(J_\ell)}\preceq
%\mu^+_{J_\ell,t}$ and the exponential decay of correlations in the
%pure phase, while in the last one we applied 
\eqref{eq:*} and choosing $t=t_L$ and
$\ell=\lceil\frac1c(\log t-\log \log t)\rceil\approx L^\gep$, one gets 
\begin{eqnarray}
\label{eq:mu+t}
 \bE \|\mu_{t_L}^--\pi^\tau\|\le e^{-cL^\gep}.
\end{eqnarray}
\qed

\subsection{Proof of Corollary \ref{th:cor++}}
We restart from \eqref{eq:daboot}, which in the case of $\tau\equiv -$ gives
\begin{eqnarray}
  \label{eq:10}
  \|\mu_t^--\pi^-\|\le |\L_L|e^{-c\ell}+\sum_{x\in\L_L} \|\mu^{-}_{K_\ell,t}-\pi^{-}_{K_\ell}\|
\end{eqnarray}
where $\pi^-,\mu^{-}_{K_\ell,t}$ and $\pi^{-}_{K_\ell}$ are just $\pi^{\tau}, \mu^{\tau,-}_{K_\ell,t}$ and $\pi^{\tau,-}_{K_\ell}$ respectively, in the 
specific case $\tau\equiv -$.
Now we use the extra information that
the mixing time $T^-_{{\rm mix},K_\ell}$ of the dynamics $\mu^{-}_{K_\ell,t}$ is at most $\exp(c'\ell^\gep)$, as follows from 
\eqref{eq:maintmix}.
%\le e^{c \ell^\gep}$. 
We choose $\ell$ to be the smallest integer in
the sequence $\{2^n-1\}_{n\in\N}$ such that $c\ell>3\log L$, so that the first term in the r.h.s. of \eqref{eq:10} is smaller than $1/L$.
Taking 
$t_1:=\exp(c(\log L)^\gep)$, one has from \eqref{eq:*}
% Choosing $\ell$ such that
% $$
% \ell^\gep=\frac1c\left(\log t-\frac1\gep\log \log t
% \right)
% $$
\begin{eqnarray}
  \label{eq:11}
 \|\mu^{-}_{K_\ell,t_1}-\pi^{-}_{K_\ell}\|\le e^{-t_1/T^-_{{\rm mix},K_\ell}}\le \exp[- \exp(c(\log L)^\gep-c'\ell^\gep)]\ll 1/|\L_L|
\end{eqnarray}
if one chooses $c$ suitably larger than $c'$ (recall that we chose $\ell=O(\log L)$)
and the corollary is proved.
\qed

\subsection{Proof of Corollary \ref{main2}}

This is rather standard, once \eqref{eq:maintmix} is known (cf. for
instance Theorem 3.2 in \cite{cf:M_2D} or Theorem 3.6 in
\cite{cf:CMT}). Clearly, it is sufficient to prove the result with $f$
redefined as  $f(\si):=(\si_0+1)$ which has the advantage of being
non-negative, increasing and with support $\{0\}$.
%$S_f\subset \Z^2$. 
Consider a square $J_\ell\subset \Z^2$ with side 
$2\ell+1\in \{2^n-1\}_{n\in\N}$ and centered at $0$. 
By the exponential decay of correlations in the pure
phase $\pi^+_\infty$,
\begin{eqnarray}
\label{eq:mumu}
|  \pi^+_\infty(f)-\pi^+_{J_\ell}(f)|\le c \,e^{-c'\ell}.
\end{eqnarray}
Moreover, by monotonicity, for every initial configuration $\si$
of the infinite system
\begin{eqnarray}
  0\le (e^{t\mathcal L} f)(\si)\le \left(e^{t \mathcal L^{+}_{J_\ell}} 
f\right)(\si)
\end{eqnarray}
and the right-hand side is an increasing function of $\si$; in accord
with the notations of Section \ref{sect:Glauber}, $\mathcal
L^{+}_{J_{\ell}}$ denotes the generator of the dynamics in $J_\ell$
with $+$ boundary conditions on $\partial J_\ell$ (its invariant
measure is of course $\pi^+_{J_\ell}$) and $\cL$ is the generator of
the infinite-volume dynamics.  One has then (using once more
monotonicity)
\begin{eqnarray}
  \pi^+_\infty\left[(e^{t\mathcal L} f)^2\right]
\le \pi^+_{J_\ell}\left[\left(e^{\mathcal L^{+}_{J_\ell}} f\right)^2\right]
\end{eqnarray}
which, together with \eqref{eq:mumu}, gives
\begin{eqnarray}
\rho(t)=  \Var^+_\infty\left(e^{t\mathcal L} f\right)\le 
\Var_{\pi^+_{J_\ell}}\left(e^{t\mathcal L^{+}_{J_\ell}} f\right)+
c\, e^{-c' \ell}.
\end{eqnarray}
By \eqref{eq:5}, one has that 
\begin{eqnarray}
  \Var_{\pi^+_{J_\ell}}\left(e^{t\mathcal L^{+}_{J_\ell}} f\right)\le 
\Var_{\pi^+_{J_\ell}}(f)
e^{-2t\gap^+_{J_\ell}},
\end{eqnarray}
with $\gap^+_{J_\ell}$ the spectral gap of  $\mathcal
L^{+}_{J_\ell}$.  From the  inequality
\begin{eqnarray}
  \gap\ge \frac1{\tmix}
\end{eqnarray}
(cf. \eqref{eq:6})
and \eqref{eq:maintmix}, one deduces that for every $\gep>0$
\begin{eqnarray}
\label{eq:onesis}
   \Var^+_\infty(e^{t\mathcal L} f)\le c\,\left(
e^{-c'\ell}+e^{-2t e^{-c\ell^{\gep}}}
\right).
\end{eqnarray}
Now letting $\ell=\ell(t)$ be the smallest integer  such that
\begin{eqnarray}
c \ell^{\gep}\ge\log t-\frac1\gep \log\log t,
\end{eqnarray}
(with the condition that
$2\ell+1\in\{2^n-1\}_{n\in\N}$)
one sees  that \eqref{eq:onesis} implies 
\eqref{eq:decorrel}.
\qed

\appendix

\section{Some equilibrium estimates}
\label{sec:equilibrio}

\subsection{A few basic facts on cluster expansion}
\label{sec:stand}
In this section we rely  on the results of \cite{cf:DKS},
but we try to be reasonably self-contained.
We let ${\Z^2}^*$ be the dual lattice of $\Z^2$ and we call a {\sl bond}
any segment joining two neighboring sites in ${\Z^2}^*$.
Two sites $x,y$ in $\Z^2$ are said to be {\sl separated by a bond $e$} if
their distance (in $\bbR^2$) from $e$ is $1/2$. A pair of orthogonal
bonds which meet in a site $x^*\in {\Z^2}^*$ is said to be a 
{\sl linked pair of bonds} if both bonds are on the same side of the 
forty-five degrees line across $x^*$. A {\sl contour} is a
sequence $e_0,\ldots,e_n$ of bonds such that:
\begin{enumerate}
\item $e_i\ne e_j$ for every $i\ne j$, except possibly when
$(i,j)=(0,n)$

\item for every $i$, $e_i$ and $e_{i+1}$ have a common vertex in ${\Z^2}^*$

\item if four bonds $e_i,e_{i+1}$ and $e_j,e_{j+1},i\ne j,j+1$ intersect at some $x^*\in {\Z^2}^*$,
then $e_i,e_{i+1}$ and $e_j,e_{j+1}$ are linked pairs of bonds.

\end{enumerate}
If $e_0=e_n$, the contour is said to be {\sl closed}, otherwise it is
said to be {\sl open}.  Given a contour $\gamma$, we let $\Delta
\gamma$ be the set of sites in $\Z^2$ such that either their distance
(in $\bbR^2$) from $\gamma$ is $1/2$, or their distance from the set
of vertices in ${\Z^2}^*$ where two non-linked bonds of $\gamma$ meet
equals $1/\sqrt2$.

We need the following
\begin{definition}
\label{def:VVbar}
  Given $V\subset \Z^2$, we let $\tilde V\subset \R^2$ be 
the union of all closed unit squares
centered at each site in $V$, and $\bar V$ be the set of all bonds
$e\in {\Z^2}^*$ such that at least one of the two sites separated by $e$ belongs
to $V$.
\end{definition}

Given a rectangular domain $V\subset \Z^2$, a configuration $\si\in
\Omega_V$ and a boundary condition $\tau$ on $\partial V$, let
$\si^{(\tau,+)}$ be the spin configuration on $\Z^2$ which coincides
with $\si$ in $V$, with $\tau$ on $\partial V$ and which is $+$
otherwise. One immediately sees that the (finite) collection of bonds
of ${\Z^2}^*$ which separate neighboring sites $x,y\in \Z^2$ such that
$\si^{(\tau,+)}_x\ne\si^{(\tau,+)}_y$ splits in a unique way into a
finite collection $\Gamma^\tau(\si)$ of closed contours. It is easy to
see that $\Gamma^\tau(\si)\cap \tilde V$ consists of a certain number
of closed contours, plus $m$ open contours, where $m$ is such that
going along $\partial V$ one meets $2m$ changes of sign in $\tau$.
Note that the collection of the $2m$ endpoints of the open contours is
fixed uniquely by $\tau$. We write $\Gamma_{open}^\tau(\si)$ for the
collection $\{\gamma_1,\ldots,\gamma_m\}$ of open contours in
$\Gamma^\tau(\si)\cap \tilde V$.  Of course, the open contours
$\gamma_i$ have to satisfy certain compatibility conditions:
$\gamma_i$ and $\gamma_j$ have no bond in common if $i\ne j$, and if
they meet at some $x^*\in {\Z^2}^*$, each of the two linked pairs of
bonds belongs to only one contour. Moreover, each $\gamma_i$ is
contained in $\tilde V$ and the collection of the endpoints of the
$\{\gamma_i\}_{i\le m}$ must coincide with that dictated by $\tau$. We
will write $\{\gamma_1,\ldots,\gamma_m\} \sim\tau$ to indicate that
the collection of open contours is compatible with $\tau$.

The following result can be easily deduced from \cite[Sec. 3.9 and
4.3]{cf:DKS}.  Writing as usual $\pi^{\tau}_V$ for the
equilibrium measure in $V$ with b.c. $\tau$, one has
\begin{Theorem}
\label{th:clustexp}
  There exists $\beta_0$ such that for every $\beta>\beta_0$ the
  following holds. For every rectangle $V\subset \Z^2$, every  b.c.
  $\tau$ on $\partial V$ and every collection
  $\{\gamma_1,\ldots,\gamma_m\}$ of open contours compatible with
  $\tau$, one has
\begin{eqnarray}
  \label{eq:clustexp} \pi^{\tau}_V
  \left(\si:\Gamma_{open}^\tau(\si)= \{\gamma_1,\ldots,\gamma_m\}\right)=
\frac {\Psi(\{\gamma_1,\ldots,\gamma_m\};V)}{\Xi(V,\tau)}
\end{eqnarray}
where the Boltzmann weight $\Psi(\{\gamma_1,\ldots,\gamma_m\};V)$ is defined
as
\begin{eqnarray}
{ \Psi}(\{\gamma_1,\ldots,\gamma_m\};V):=
  \exp
  \left\{
    -2\beta\sum_{i=1}^m|\gamma_i|-
    \sumtwo{\Lambda\subset V:}{\Lambda\cap(\cup_i\Delta\gamma_i)\ne\emptyset}\Phi(\Lambda)
  \right\},
\end{eqnarray}
$|\gamma_i|$ is the geometric length of $\gamma_i$ and
\begin{eqnarray}
  \label{eq:Xi}
  \Xi(V,\tau):=\sum_{\{\gamma_1,\ldots,\gamma_m\}
\sim\tau}\Psi(\{\gamma_1,\ldots,\gamma_m\};V).
%\exp
%\left\{
%-2\beta\sum_{i=1}^m|\gamma_i|-
%\sumtwo{\Lambda\subset V:}{\Lambda\cap(\cup_i\Delta\gamma_i)\ne\emptyset}\Phi(\Lambda)
%\right\}.
\end{eqnarray}
The potential $\Phi$ satisfies for every $\Lambda\subset
V,|\Lambda|\ge2$ and for every $x\in V$:
\begin{eqnarray}
\label{eq:Phi}
&&  \left|\Phi(\Lambda)\right|\le \exp(-2(\beta-\beta_0)d(\Lambda))\\
&&\left|\Phi(\{x\})\right|\le \exp(-8(\beta-\beta_0))
\end{eqnarray}
where, for connected (in the sense of subgraphs of the graph $\Z^2$)
$\Lambda$, $d(\Lambda)$ is the length of the smallest connected set of
bonds from $\bar \Lambda$ (cf. Definition \ref{def:VVbar})
containing all the bonds which separate
sites in $\Lambda$ from sites in $\Lambda^c$.  If $\Lambda$ is not
connected then $d(\Lambda):=+\infty$.
\end{Theorem}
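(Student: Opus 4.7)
\emph{Plan of proof.} The plan is to reduce the statement to a standard cluster expansion for the low-temperature 2D Ising model in the spirit of \cite{cf:DKS}. The starting point is the Peierls representation: given the b.c.\ $\tau$ and a spin configuration $\sigma\in\O_V$, extend $\sigma$ to $\sigma^{(\tau,+)}$ on $\Z^2$ and look at the set of dual bonds separating neighbouring sites of opposite sign. This set decomposes canonically into contours (with the linked-pair convention resolving the ambiguity at intersections), and the Boltzmann weight of $\sigma$ is proportional to $e^{-2\b \sum_\gamma |\gamma|}$, the sum running over all such contours. Restricting to $\tilde V$ gives a decomposition into the collection $\Gamma_{open}^\tau(\sigma)$ of open contours (compatible with $\tau$) together with a configuration of closed contours in $V$.

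Fixing an admissible family $\{\gamma_1,\dots,\gamma_m\}\sim \tau$ and summing over all compatible closed contour systems, one obtains
\[
\pi_V^\tau\bigl(\Gamma_{open}^\tau(\sigma)=\{\gamma_1,\dots,\gamma_m\}\bigr)
=\frac{e^{-2\b\sum_i|\gamma_i|}\,Z^{\mathrm{cl}}_{V;\,\{\gamma_i\}}}{Z_V^\tau},
\]
where $Z^{\mathrm{cl}}_{V;\,\{\gamma_i\}}$ sums over closed contour systems in $V$ that share no bond with $\cup_i\gamma_i$ and respect the linked-pair condition at any vertex where they meet the open contours. A direct geometric check shows that this compatibility constraint is equivalent to requiring every closed contour to avoid the bonds adjacent to the set $\cup_i\Delta\gamma_i$: the extra fattening by $1/\sqrt 2$ at the non-linked crossing points of $\gamma_i$ encoded in the definition of $\Delta\gamma$ is exactly what is needed to forbid the otherwise-allowed ``piercing'' configurations.

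For $\b$ large enough, the closed-contour activities $e^{-2\b|\gamma|}$ satisfy the Koteck\'y--Preiss convergence condition, and the usual Mayer (cluster) expansion yields
\[
\log Z^{\mathrm{cl}}_{V;\,\{\gamma_i\}}
=\sum_{\Lambda\subset V}\Phi(\Lambda)\;-\sum_{\substack{\Lambda\subset V\\ \Lambda\cap(\cup_i\Delta\gamma_i)\ne\emptyset}}\Phi(\Lambda),
\]
with $\Phi(\Lambda)$ the usual Ursell-type coefficients obeying the decay estimates $|\Phi(\Lambda)|\le \exp(-2(\b-\b_0) d(\Lambda))$ for $|\Lambda|\ge 2$ and $|\Phi(\{x\})|\le \exp(-8(\b-\b_0))$, the latter coming from the fact that the shortest closed contour encircling a single site has length $4$. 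The first sum on the right-hand side is independent of $\{\gamma_i\}$, so it cancels between numerator and denominator once we factor the denominator as $Z_V^\tau=\sum_{\{\gamma'_i\}\sim\tau} e^{-2\b\sum_i|\gamma'_i|}Z^{\mathrm{cl}}_{V;\,\{\gamma'_i\}}=\exp\bigl(\sum_{\Lambda\subset V}\Phi(\Lambda)\bigr)\,\Xi(V,\tau)$. What remains is exactly \eqref{eq:clustexp}, and the decay estimates \eqref{eq:Phi} are the ones just quoted.

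The main technical obstacle is verifying that, in spite of the linked-pair convention at vertices of ${\Z^2}^*$, closed contours genuinely form a polymer system to which the Koteck\'y--Preiss scheme applies, with incompatibility meaning ``shared bond or forbidden linked-pair configuration at a common vertex''; this is essentially the content of \cite[Sec.~3.9 and 4.3]{cf:DKS}. Once this geometric bookkeeping is in place, and once it is checked that ``$\Lambda$ forbidden by $\{\gamma_i\}$'' is captured by the criterion $\Lambda\cap(\cup_i\Delta\gamma_i)\ne\emptyset$, the convergence of the expansion and the resulting bounds on $\Phi$ are standard and valid uniformly in $V$ and $\tau$ for all $\b>\b_0$ with $\b_0$ large enough.
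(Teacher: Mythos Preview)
Your proposal is correct and follows precisely the route the paper indicates: the paper does not give an independent proof but states that the result ``can be easily deduced from \cite[Sec.~3.9 and 4.3]{cf:DKS}'', and what you have written is an accurate sketch of that deduction---Peierls representation, summation over closed contours with fixed open ones, Koteck\'y--Preiss expansion of the resulting polymer gas, and identification of the exclusion constraint with $\Lambda\cap(\cup_i\Delta\gamma_i)\ne\emptyset$. There is nothing to add.
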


The fast decay property of $\Phi$  (with respect to both
$\beta$ and $d(\L)$) has the following simple consequence:
 \begin{Lemma}\cite[Lemma 3.10]{cf:DKS}
\label{th:lemma3102}
There exists $\beta'_0$ depending only on $\beta_0$ of Theorem
\ref{th:clustexp} such that for $\beta>\beta'_0$, for every bond $e\in
{\Z^2}^*$ and for every $d>0$ one has
 \begin{eqnarray}
   \sumtwo{\Lambda\subset \Z^2:e\in \bar \Lambda}{d(\Lambda)\ge d}
 e^{-2(\beta-\beta_0)d(\Lambda)}\le e^{-2(\beta-\beta'_0)d}.
 \end{eqnarray}
\end{Lemma}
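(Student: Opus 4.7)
My plan is to convert the sum into a geometric series in $k=d(\L)$ and bound the number of $\L$ realizing each $k$ by a Peierls-style lattice-animal count. Write
\[
S \;:=\; \sum_{\substack{\L\subset\Z^2 \\ e\in\bar\L,\,d(\L)\ge d}} e^{-2(\beta-\beta_0)d(\L)} \;=\; \sum_{k\ge d} N(k,e)\, e^{-2(\beta-\beta_0)k},
\]
where $N(k,e)$ counts those (necessarily connected, else $d(\L)=+\infty$) sets $\L\subset\Z^2$ with $e\in\bar\L$ and $d(\L)=k$. The core of the argument is to establish $N(k,e)\le C^{k}$ for some constant $C$ depending only on the lattice.

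First I build a witness. Fix such a $\L$ and let $T=T(\L)\subset\bar\L$ be a connected bond set of minimum cardinality $|T|=k$ containing every separating bond of $\L$. Since $T$ contains the outer bond-boundary of $\L$, a two-dimensional isoperimetric observation yields $\mathrm{diam}(\L)\le c\,|T|$, so the bond-graph distance from $e$ to $T$ inside $\bar\L$ (viewed as a subgraph of $\Z^2$) is at most $c\,|T|$: indeed, $e$ has an endpoint in $\L$, from which a path in $\L$ reaches the boundary of $\L$ in at most $\mathrm{diam}(\L)$ steps. Appending such a path gives a connected bond set $T^{\star}\supset T$ with $e\in T^{\star}$ and $|T^{\star}|\le c'k$.

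Next I count. The number of connected bond subsets of $\Z^2$ of cardinality $m$ through a prescribed bond is at most $\alpha^{m}$ for an absolute constant $\alpha$ (standard lattice-animal bound, valid since the bond-adjacency graph of $\Z^2$ has bounded degree). Given the witness $T^{\star}$, the set $\L$ is specified by deciding, for each of the at most $O(|T^{\star}|)$ faces of the planar graph drawn by $T^{\star}$ (Euler's formula), whether that face lies in $\L$, because by minimality of $T$ all boundary bonds of $\L$ already sit inside $T^{\star}$. This yields at most $2^{O(k)}$ choices per $T^{\star}$, whence $N(k,e)\le C^{k}$. Substituting,
\[
S\;\le\;\sum_{k\ge d}\bigl(C e^{-2(\beta-\beta_0)}\bigr)^{k},
\]
which is geometric, and setting $\beta'_0:=\beta_0+\tfrac12\log C+1$ gives $Ce^{-2(\beta-\beta'_0)}\le e^{-2}$ for every $\beta>\beta'_0$. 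This readily leads to $S\le e^{-2(\beta-\beta'_0)d}$ after one further harmless increase of $\beta'_0$ that absorbs the prefactor $(1-e^{-2})^{-1}$.

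The main obstacle is the witness step: one must pass from the minimum tree $T$ (which may miss the distinguished bond $e$) to an enlarged connected bond set $T^{\star}$ that contains $e$ and whose size is still $O(d(\L))$. This relies on the 2D fact that the bond boundary of a connected site set dominates its diameter, which controls how far inside $\bar\L$ the bond $e$ can hide. Once $T^{\star}$ is in hand, the planar combinatorics and the exponential summation are routine.
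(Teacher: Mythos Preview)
The paper does not supply its own proof of this lemma; it is quoted directly from \cite[Lemma~3.10]{cf:DKS}. Your argument is correct and follows the standard Peierls route one would expect in that reference: encode each admissible connected $\Lambda$ by a connected bond-animal $T^\star$ of size $O(d(\Lambda))$ containing the fixed bond $e$, use the exponential bound on lattice animals through a given bond together with Euler's formula to get $N(k,e)\le C^k$, and sum the geometric series. Two cosmetic points: the boundary bonds of $\Lambda$ lie in $T$ by the \emph{definition} of $d(\Lambda)$, not by minimality; and ``$e$ has an endpoint in $\Lambda$'' should read ``one of the two sites of $\Z^2$ separated by $e$ lies in $\Lambda$'', since $e$ is a dual bond. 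The genuinely two-dimensional step you flag---that the Euclidean diameter of a finite connected $\Lambda\subset\Z^2$ is controlled by its outer Peierls perimeter and hence by $d(\Lambda)$, so that a short straight path from (a site separated by) $e$ to a boundary bond exists---is exactly what makes the witness construction go through with $|T^\star|=O(k)$.
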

This allows to essentially neglect the
interaction between portions of a contour which are sufficiently far 
from each other.

\medskip

In order to apply directly results from \cite{cf:DKS} to obtain the
estimates we need, we define the {\sl canonical ensemble of contours}.
Let $a,b$ be sites in $ {\Z^2}$. Then, for any open contour $\gamma$
which has $a+(1/2,1/2),b +(1/2,1/2)\in {\Z^2}^*$ as endpoints, in
formulas $a\stackrel \gamma\leftrightarrow b$ (with some abuse of
language, we will sometimes say that $\gamma$ {\sl connects $a$ and
  $b$}), we define the probability distribution
\begin{eqnarray}
\label{eq:canonico}
  \mathcal P_{a,b}(\gamma):=
\left(\mathcal Z_{a,b}\right)^{-1}\exp\left\{
-2\beta|\gamma|-\sumtwo{\Lambda\subset \Z^2:}{\Lambda\cap \Delta\gamma\ne
\emptyset}\Phi(\Lambda)
\right\}=\left(\mathcal Z_{a,b}\right)^{-1}\,\Psi(\gamma; {\Z^2})
\end{eqnarray}
and of course
\begin{eqnarray}
  \label{eq:Zab}
  \mathcal Z_{a,b}:= \sum_{\gamma:a\stackrel \gamma\leftrightarrow b}
\Psi(\gamma;\Z^2).
%\exp\left\{
%-2\beta|\gamma|-\sumtwo{\Lambda\subset \Z^2:}{\Lambda\cap \Delta\gamma\ne
%\emptyset}\Phi(\Lambda)
%\right\}.
\end{eqnarray}
Note that we do not require that $\gamma\subset \tilde V$ and the
sum in $\Psi$ is now over all (connected) sets $\Lambda\subset \Z^2$.
The expectation w.r.t. $\mathcal P_{a,b}$ will be denoted by $\mathcal
E_{a,b}$.

\subsubsection{Surface tension and basic properties}
Let ${\vec n}$ be a vector in the unit circle $\mathbb S$ such that
$\vec n \cdot \vec e_1>0$ and call $\phi_{\vec n}$ the angle it forms
with $\vec e_1$ (of course, $-\pi/2<\phi_{\vec n}< \pi/2$). For $N\in
\N$, let $b_{N,{\vec n}}=(N,y_{N,{\vec n}})\in {\Z^2}$ where
$y_{N,{\vec n}}=\max\{y\in \Z:y\le N\tan(\phi_{\vec n})\}$. Let also
$\underline 0:=(0,0)$.  Then, it is known \cite[Prop. 4.12]{cf:DKS}
that, for $\beta$ large enough, the surface tension introduced in
\eqref{surfacetension} is given by
\begin{eqnarray}
\label{eq:surftens}
  \tau_\beta({\vec n}):=-\lim_{N\to\infty}\frac1{\beta d(\underline0,b_{N,
\vec n})}
\log \mathcal Z_{\underline0,b_{N,{\vec n}}},
\end{eqnarray}
where, if $x,y\in\R^2$, $d(x,y)$ is their Euclidean distance.  To be
precise, one has to assume that $\phi_{\vec n}$ is bounded away from
$\pm\pi/2$ uniformly in $N$, but this will be inessential for us since
we will always have $\phi_{\vec n}$ small.

One can extract from \cite[Sec. 4.8, 4.9 and 4.12]{cf:DKS} that the
surface tension is an analytic function of $\phi_{\vec n}$ (always
assuming that $\beta$ is large enough), and by symmetry one sees that
it is an even function of $\phi_{\vec n}$.  In \cite[Sec.
4.12]{cf:DKS}, sharp estimates on the rate of convergence in
\eqref{eq:surftens} (e.g.  \eqref{eq:tausharp} below) are given.

\subsection{Proof of \eqref{eq:primaeq}}
\label{sec:primaeq}
The domain $E_L(Q_L)$ which appears in \eqref{eq:primaeq} is a
rectangle with height shorter than its base, and the b.c. $\tau$ is
$+$ on the South border and $-$ otherwise.  Since the event that the
unique open contour reaches the height of  the South border of $A$ is increasing, in
order to prove \eqref{eq:primaeq}, by the FKG inequalities we can
first of all move upwards the North border of $E_L(Q_L)$ until we
obtain a square (of side $3L$, which however here we call just $L$);
we let therefore $V:=\{1,\ldots,L\}^2$.  Secondly (always by FKG) we
can change the b.c.  $\tau $ to $\tau'\ge \tau$ by first fixing a $\delta>0$
and then establishing that $\tau'_x=+$ if $x=(x_1,x_2)\in\partial V$
with $x_2\le \lfloor\delta L^{1/2+\gep}\rfloor$, and $ \tau'_x=-$
otherwise.

Given a configuration $\si\in\O_V$, let $\gamma$ be the unique open
contour in $\Gamma_{open}^{\tau'}(\si)$: of course, $\gamma\subset
\tilde V$ and $a_1\stackrel \gamma\leftrightarrow a_2$, where
$a_1:=(0,\lfloor \delta L^{1/2+\gep}\rfloor)$ and $a_2:=(L,
\lfloor\delta L^{1/2+\gep}\rfloor)$.  We let $h(\gamma):= \max\{x_2:
(x_1,x_2)\in \gamma\}$ be the maximal height reached by $\gamma$,
while as usual $\gep>0$ is small and fixed.  Looking at
\eqref{eq:clustexp} and \eqref{eq:primaeq}, we see that what we have
to prove is that for every fixed $\delta>0$ one has for every $L\in
\N$
\begin{eqnarray}
\label{eq:ND}
\frac{\cN}{\Xi(V,\tau')}:=  \frac{
\sum_{\gamma\sim\tau'}
\Psi(\gamma;V){\bf 1}_{\{h(\gamma)>2\delta L^{1/2+\gep}\}}}
{\Xi(V,\tau')}\le e^{-c L^{2\gep}}
\end{eqnarray}
for some $c(\beta,\delta,\gep)>0$.
%where we recall that the sum runs over all 
%configurations of the open contour $\gamma$ compatible with $\tau'$.
We will always assume that $\beta$ is large enough.

First we upper bound the numerator in \eqref{eq:ND}:  with the
notations of Section \ref{sec:stand} (cf. in particular \eqref{eq:canonico}) and 
setting for a given contour $\gamma$ and a given $V\subset \Z^2$
\begin{eqnarray}
  \label{eq:8}
  \Phi_{V}(\gamma):=\sum_{\L\subset \Z^2:\,\Lambda\cap \Delta\gamma\ne
\emptyset,\L\cap V^c\ne\emptyset}\Phi(\L),
\end{eqnarray}
one has
\begin{eqnarray}
  \cN&\le&\cZ_{a_1,a_2}\,\cE_{a_1,a_2}\left[
{\bf 1}_{\{h(\gamma)>2\delta L^{1/2+\gep}\}}\, 
\exp\left(\Phi_{V}(\gamma)
%\sum_{\L\subset \Z^2:\,\Lambda\cap\Delta \gamma\ne
%\emptyset,\L\cap V^c\ne\emptyset}\Phi(\L)
\right)
\right]\\\nonumber
&\le& 
\cZ_{a_1,a_2}\sqrt{
\cP_{a_1,a_2}(h(\gamma)>2\delta L^{1/2+\gep})
}\sqrt{\cE_{a_1,a_2}\left[
\exp\left(2 \Phi_{V}(\gamma)
%\sum_{\L\subset \Z^2:\,\Lambda\cap \Delta\gamma\ne
%\emptyset,\L\cap V^c\ne\emptyset}\Phi(\L)
\right)
\right]
},
\end{eqnarray}
where in the first step we simply removed the constraint that
$\gamma\subset \tilde V$, which is implicit in the requirement
$\gamma\sim \tau'$.  It follows directly from \cite[Prop.
4.15]{cf:DKS} that the first  square root is smaller than
$\exp(-c L^{2\gep})$ (note that we are requiring the contour to reach
a height which exceeds by $\delta L^{1/2+\gep}$ the height of its
endpoints). On the other hand, from \cite[Th. 4.16, in particular Eq.
(4.16.6)]{cf:DKS} and the fast decay properties of $\Phi$ 
(in particular Lemma \ref{th:lemma3102}) it is not
difficult to deduce that the second one is upper bounded by
$
\exp\left(c (\log L)^c\right).
$
  Moreover,
one has \cite[Eq. (4.12.3)]{cf:DKS} that
\begin{eqnarray}
\label{eq:tausharp}
  \cZ_{a_1,a_2}\le c(\beta)\frac{e^{-\beta \tau_\beta(\vec e_1)L}}{\sqrt L},
\end{eqnarray}
where of course $\tau_\beta(\vec e_1)$ is the surface tension in the 
horizontal direction and we used the fact that $d(a_1,a_2)= L$.
In conclusion, we have 
\begin{eqnarray}
\label{eq:ubN}
\cN\le
\exp\left[-\beta\tau_\beta(\vec e_1)L-c L^{2\gep}\right].  
\end{eqnarray}

Next we observe that, again from \cite[Th. 4.16 and Eq.
(4.16.7)]{cf:DKS},
\begin{eqnarray}
\label{eq:lbD}
 \Xi(V,\tau') \ge \exp\left[-\beta\tau_\beta(\vec e_1)L-c (\log L)^c\right]
\end{eqnarray}
 which together with \eqref{eq:ubN} concludes the proof of \eqref{eq:primaeq}.
 \qed

\subsection{Proof of Claim \ref{second claim}}
In this section, $V$ is the rectangle $\{(i,j)\in\Z^2: 1\le i\le L, 1
\le j\le 4\lceil (2L+1)^{1/2+\gep}\rceil\}$ and the b.c. $\tau$ is
defined by $\tau_x=-$ for $x\in \Delta:= \{(i,0)\in\Z^2: |i-\lfloor L/2
\rfloor|\le
L^{3\gep}\}$ and for $x=(x_1,x_2)\in \partial V$ with $x_2> 2\lceil
(2L+1)^{1/2+\gep}\rceil$; $\tau_x=+$ otherwise.  Moreover, $C$ is the
infinite vertical column $C=\{(x_1,x_2)\in\bbR^2:x_1=\lfloor
L/2\rfloor\}$.  Write $\Delta_1+(1,0)$ (resp. $\Delta_2$) for the
left-most (resp.  right-most) point in $\Delta$. For every
$\si\in\O_V$ there are two open contours in $\Gamma_{open}^\tau(\si)$:
$\gamma_1$ and $\gamma_2$, and we establish by convention that
$\gamma_1$ is the contour which contains $\Delta_1+(1/2,1/2)$ as one
of its endpoints.  Two cases can occur (see Figure
\ref{fig:twocases}):
\begin{itemize}
\item either $\Delta_1\stackrel {\gamma_1} \leftrightarrow \Delta_2$ and 
$w_1 \stackrel {\gamma_2} \leftrightarrow w_2$,
where $w_1:=(0, 2\lceil(2L+1)^{1/2+\gep}\rceil)$ and
$w_2:=(L, 2\lceil(2L+1)^{1/2+\gep}\rceil)$,

\item or $w_1 \stackrel {\gamma_1} \leftrightarrow \Delta_1$
and $\Delta_2\stackrel {\gamma_2} \leftrightarrow w_2$.

\end{itemize}
\begin{figure}[htp]
\begin{center}
\includegraphics[width=0.9\textwidth]{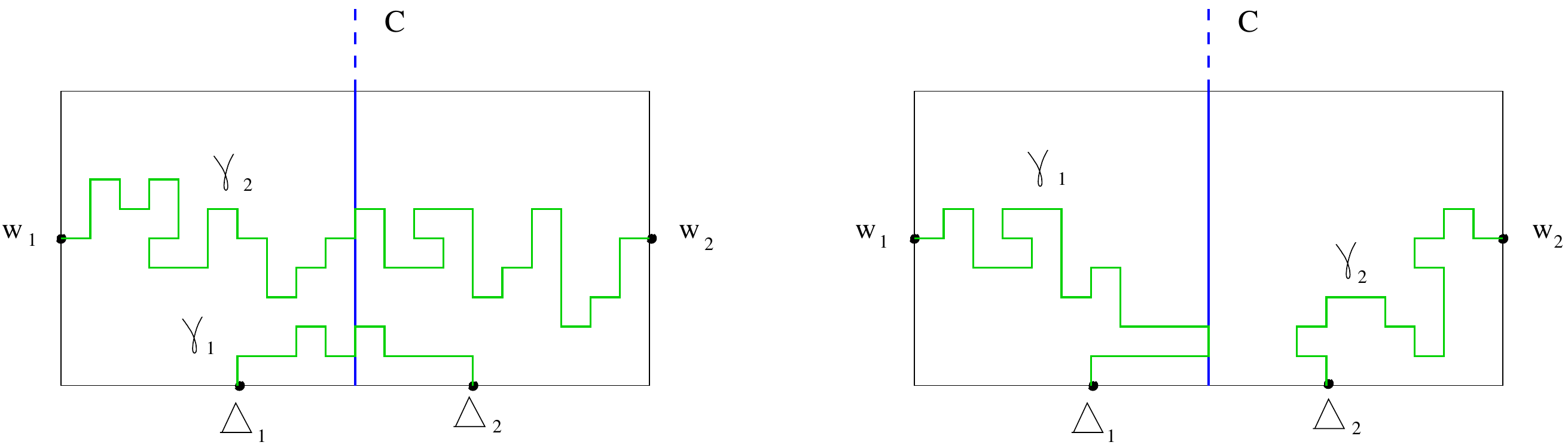}
\end{center}
\caption{The two topologically distinct possibilities: either
  $\gamma_1$ connects $\Delta_1$ to $\Delta_2$, or it connects $w_1$
  to $\Delta_1$. The fist case is very unlikely, see
  \eqref{eq:tecnica1}.  }
\label{fig:twocases} 
\end{figure}

Let $C_1$ (resp. $C_2$) be the vertical column at distance $\lfloor 
L^\gep\rfloor$ to the left (resp. to the right) of the column $C$.
Then, one has the
\begin{Lemma}
\label{th:FKG}
  The probability that appears in Claim \ref{second claim} can be upper 
bounded as 
\begin{eqnarray}
  \pi_{\bar A}^{(-,+,\Delta)}(\Gamma^c)\le \pi_V^{\tau}(\bar \Gamma^c),
\end{eqnarray}
where
\begin{eqnarray}
  \label{eq:barL}
  \bar \Gamma:=\{w_i \stackrel {\gamma_i} \leftrightarrow \Delta_i
\mbox{\; and\;}
 \gamma_i\cap C_i=\emptyset,
i=1,2
\}.
\end{eqnarray}
\end{Lemma}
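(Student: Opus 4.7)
The plan is to prove the Lemma in two conceptually distinct steps: first pass from $\bar A$ to the taller rectangle $V$ by a standard FKG/DLR argument, then verify the event inclusion $\bar\Gamma\subset\Gamma$ using the topological correspondence between the $-$ spin chains defining $\Gamma$ and the open contours defining $\bar\Gamma$.

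\medskip

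\noindent\emph{Step 1: reduction to $V$.} I will first establish $\pi_{\bar A}^{(-,+,\Delta)}(\Gamma^c)\le\pi_V^\tau(\Gamma^c)$. Observe that $\bar A$ sits inside $V$ with its South, East and West sides lying on $\partial V$ and carrying precisely the boundary condition ``$-$ on $\Delta$, $+$ elsewhere'': indeed, the $-$ portion of $\tau$ is entirely contained in the top half $\{x_2>2\lceil(2L+1)^{1/2+\gep}\rceil\}$ of $\partial V$, which is disjoint from these three sides of $\bar A$. Conditioning $\pi_V^\tau$ on the configuration immediately above the North side of $\bar A$ and then averaging, DLR yields a mixture of measures $\pi_{\bar A}^{\hat\tau}$, each with $\hat\tau\ge(-,+,\Delta)$ pointwise; FKG and monotonicity in the boundary condition then imply that the marginal of $\pi_V^\tau$ on $\Omega_{\bar A}$ stochastically dominates $\pi_{\bar A}^{(-,+,\Delta)}$. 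Since $\Gamma^c$ is an increasing event (flipping a $-$ spin to $+$ can only help break one of the two $-$ chains required by $\Gamma$), the claimed inequality follows.

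\medskip

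\noindent\emph{Step 2: event inclusion $\bar\Gamma\subset\Gamma$ in $\Omega_V$.} Suppose $\bar\Gamma$ holds. The open contour $\gamma_1$ joins $\Delta_1$ to $w_1$ and avoids $C_1$; since both endpoints of $\gamma_1$ sit strictly West of $C_1$, the entire curve $\gamma_1$ lies in $\{x_1<\lfloor L/2\rfloor-\lfloor L^\gep\rfloor\}$. The $*$-connected chain of $-1$ spins lying adjacent to $\gamma_1$ on its $-$ side is then within Euclidean distance $1$ of $\gamma_1$, hence stays strictly West of $C$ (for $L$ large), touches the $\Delta$ part of $\partial V$ near $\Delta_1$, and reaches height $2\lceil(2L+1)^{1/2+\gep}\rceil$ near $w_1$. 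Following this chain from its $\Delta$-end until its first visit to a $-1$ spin at height $2\lceil(2L+1)^{1/2+\gep}\rceil$ extracts a $*$-connected subchain of $-1$ spins lying in $\bar A$ that stays West of $C$ and joins South to North. A symmetric argument applied to $\gamma_2$ produces the required $-$ chain East of $C$, so $\bar\Gamma$ implies $\Gamma$ and $\pi_V^\tau(\Gamma^c)\le\pi_V^\tau(\bar\Gamma^c)$. Combining with Step 1 gives the Lemma.

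\medskip

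The main obstacle lies in Step 2, because \emph{a priori} $\gamma_1$ lives in $V$ (not in $\bar A$) and could in principle excurse into the upper half $V\setminus\bar A$ on its way from $\Delta_1$ to $w_1$. One must argue that, despite such excursions, the restriction of the shadowing $-$ chain to $\bar A$ still contains a $*$-connected subchain from South to the North boundary of $\bar A$; the key observation is that $w_1$ sits \emph{exactly} at the height of the North boundary of $\bar A$, so on its first upward journey from $\Delta_1$ the shadow must hit that height while still inside $\bar A$. All other ingredients are essentially routine applications of FKG, DLR and monotonicity.
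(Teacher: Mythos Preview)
Your proposal is correct and follows the same two-step approach as the paper's own proof (FKG/monotonicity to enlarge from $\bar A$ to $V$, then the event inclusion $\bar\Gamma\subset\Gamma$), just with considerably more detail spelled out. One minor point: the event $\Gamma$ in the paper is defined with respect to the rectangle $A$ (the lower half of $\bar A$), not $\bar A$ itself, so your Step~2 in fact proves a slightly stronger inclusion than needed; this does not affect correctness, since a $*$-chain of $-$ spins reaching the North side of $\bar A$ a fortiori crosses the North side of $A$.
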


Therefore, from Theorem \ref{th:clustexp} we see that to prove
Claim \ref{second claim}  it is enough to show that
\begin{eqnarray}
\label{eq:tecnica1}
\frac{\cN_1}{\Xi(V,\tau)}:=
  \frac{\sum_{\{\gamma_1,\gamma_2\}\sim\tau}\Psi(\{\gamma_1,\gamma_2\};V)
{\bf 1}_{\{\Delta_1\stackrel {\gamma_1} \leftrightarrow \Delta_2\}}}
{\Xi(V,\tau)}\le e^{-c L^{3\gep}}
\end{eqnarray}
and that
\begin{eqnarray}
\label{eq:tecnica2}
\frac{\cN_2}{\Xi(V,\tau)}:=
   \frac{\sum_{\{\gamma_1,\gamma_2\}\sim\tau}\Psi(\{\gamma_1,\gamma_2\};V)
{\bf 1}_{\{\Delta_1\stackrel {\gamma_1} \leftrightarrow w_1\}}
{\bf 1}_{\{\gamma_1\cap C_1\ne\emptyset % \mbox{\;or\;}
%\gamma_1\cap C\ne\emptyset
\}}
}
{\Xi(V,\tau)}\le e^{-c L^{3\gep}},
\end{eqnarray}
for some positive $c=c(\beta,\gep)$.

{\sl Proof of Lemma \ref{th:FKG}.}
Since the event $\Gamma^c$ is increasing, we note first of all that thanks 
to FKG we can enlarge the
system from $\bar A$ to $V$ and change the b.c. from $(-,+,\Delta)$ to 
$\tau$. Secondly, we observe that the event $\bar \Gamma$ implies 
$\Gamma$.
\qed

\subsubsection{Lower bound on $\Xi(V,\tau)$}
We will prove that there exists a positive constant $c'$ such that for
$\beta$ large
\begin{eqnarray}
\label{eq:LBXi}
  \Xi(V,\tau)\ge \exp\left(-\beta\tau_\beta(\vec e_1)(L-c' L^{3\gep})\right).
\end{eqnarray}
Since we want a lower bound, we are allowed to keep only the
configurations $\{\gamma_1,\gamma_2\}\sim \tau$ such that
$w_i\stackrel{\gamma_i}\leftrightarrow \Delta_i$ and $\gamma_i$ does
not touch the column $C_i$, for $i=1,2$.  Call $\cG_i, i=1,2$ 
the set of configurations of $\gamma_i$  allowed
by the above constraints.

Using the decay properties of
$\Phi$, one sees that
\begin{eqnarray}
\label{eq:denomina}
  \Xi(V,\tau)\ge c\,\left(\sum_{\gamma_1\in\cG_1}\Psi(\gamma_1;V)
\right)^2.
\end{eqnarray}
The square is due to the fact that $\gamma_1$ and $\gamma_2$ essentially 
do not interact because their mutual distance is
larger than $L^\gep$ (the residual interaction can be bounded by a constant
which is absorbed in $c$).
It remains to prove that 
\begin{eqnarray}
  \sum_{\gamma_1\in\cG_1}
\Psi(\gamma_1;V)
\ge \exp(-\beta\tau_\beta(\vec e_1)((L/2)-c' L^{3\gep}))
\end{eqnarray}
for some positive $c'$.  This is an immediate consequence of Lemma
\ref{th:lemmastrazio} below (applied with $\kappa=\gep$), together
with the fact that $d(w_1,\Delta_1)=L/2- L^{3\gep}+O(L^{2\gep})$, of
the fact that the angle $\phi$ formed by the segment $w_1\Delta_1$ and
$\vec e_1$ is $O(L^{-1/2+\gep})$, and finally of the analyticity of
the surface tension and its symmetry around $\vec e_1$.

\subsubsection{Upper bound on $\cN_1$}
Using rough upper bounds on the number of paths $\gamma_1$ which
connect $\Delta_1$ and $\Delta_2$ and the decay properties of $\Phi$
(in particular Lemma \ref{th:lemma3102}), one sees that for $L$ large
\begin{eqnarray}
  \cN_1\le e^{-c L^{3\gep}}\sum_{\gamma\subset \tilde V:\,w_1\stackrel\gamma \leftrightarrow
    w_2}\Psi(\gamma;V)
\end{eqnarray}
for some $c=c(\beta,\gep)>0$, where of course one uses the fact that  $
d(\Delta_1,\Delta_2)=2L^{3\gep}$. Moreover, Theorem  4.16 of
\cite{cf:DKS} ensures that %the sum in the right-hand side
%is upper bounded by 
\begin{eqnarray}
  \label{eq:ensure}
  \sum_{{\gamma\subset \tilde V}:\,w_1\stackrel\gamma \leftrightarrow
    w_2}\Psi(\gamma;V)\le \exp(-\beta \tau_\beta(\vec e_1) L+c(\log
  L)^c),   
\end{eqnarray}
which, together with \eqref{eq:LBXi}, concludes the proof of
\eqref{eq:tecnica1}.

\subsubsection{Proof of \eqref{eq:tecnica2}}
The estimate we wish to prove is very intuitive: if the path
$\gamma_1$ makes a deviation to the right to touch the column $C_1$,
it has an excess length, and therefore an excess energy, of order
$L^{3\gep}$ with respect to typical paths. The actual proof of
\eqref{eq:tecnica2} is a straightforward (although a bit lengthy)
application of results from \cite{cf:DKS} and of the FKG inequalities. We
sketch only the main steps.

First of all, letting $d(\gamma_1,\gamma_2):=\min\{d(x_1,x_2),x_i\in
\gamma_i, i=1,2\}$, we show that the contribution of the configurations
such that $d(\gamma_1,\gamma_2)< L^\gep$ is negligible.
To this purpose, decompose first of all $\cN_2$ as
$  \cN_2=\cN_2'+\cN_2''$ where 
\begin{eqnarray}
  \cN_2':=\sum_{\{\gamma_1,\gamma_2\}\sim \tau}\Psi(\{\gamma_1,\gamma_2\};V)
{\bf 1}_{\{\Delta_1\stackrel {\gamma_1} \leftrightarrow w_1\}}
{\bf 1}_{\{\gamma_1\cap C_1\ne\emptyset\}}
{\bf 1}_{\{d(\gamma_1,\gamma_2)< L^\gep\}}.
\end{eqnarray}
Consider the paths $\gamma_i$ as oriented from $w_i$ to $\Delta_i$
and, if $d(\gamma_1,\gamma_2)< L^\gep$, call
$P:=P(\gamma_1,\gamma_2):=(x_1,x_2)\in {\Z^2}^*\times {\Z^2}^*$ where
$x_1$ is the first point in $\gamma_1\cap {\Z^2}^*$ which is at
distance less than $L^\gep$ from $\gamma_2$, and $x_2$ is the first
point in $\gamma_2\cap {\Z^2}^*$ at distance less than $L^\gep$ from
$x_1$. Of course, $P$ can take at most $L^2$ different values (this is
a rough upper bound) and we can decompose $\cN_2'$ as $\cN_2'=\sum_p \cN_{2,p}'$
where $\cN_{2,p}'$ contains only the terms such that
$P(\gamma_1,\gamma_2)=p$.  Given $(\gamma_1,\gamma_2)$ such that
$P(\gamma_1,\gamma_2)=p$, for $i=1,2$ one can write $\gamma_i$
as the union of $\gamma_i'$ and $\gamma_i''$, where $\gamma_i'$
connects $w_i$ to $x_i$, and $\gamma_i''$ connects $x_i$ to
$\Delta_i$.
%Moreover, we write $\gamma_3$ for the open contour which connects $w_1$ to
%$w_2$, and which is obtained from the concatenation of $\gamma_1',\gamma_2'$
%and of an open contour connecting 
Using the decay properties of $\Phi$ one sees that, uniformly in 
$p$ and in $\{\gamma_i'\}_{i=1,2}$,
\begin{eqnarray}
  \sum_{\{\gamma_i''\}_{i=1,2}}\Psi(\{\gamma_1,\gamma_2\};V)\le c
\Psi(\gamma_1';V)\Psi(\gamma_2';V),
\end{eqnarray}
where the sum runs over all the configurations of
$\{\gamma_i''\}_{i=1,2}$ compatible with $\{\gamma_i'\}_{i=1,2}$.  Let
$\Sigma$ be the set of paths $\gamma_3$ which connect $x_1$ to $x_2$,
and such that the concatenation of $\gamma_1',\gamma_3$ and $\gamma_2'$ is an
admissible open path, call it simply $\gamma$, connecting $w_1$ to $w_2$ 
and
contained in $\tilde V$.
Of course, the set $\Sigma$ depends on $\{\gamma_i'\}_{i=1,2}$.  Then, one sees
 that
\begin{eqnarray}
   \sum_{\{\gamma_i''\}_{i=1,2}}\Psi(\{\gamma_1,\gamma_2\};V)\le
e^{c L^\gep} \sum_{\gamma_3\in \Sigma}\Psi(\gamma;V).
\end{eqnarray}
In conclusion, summing over the admissible configurations of 
$\{\gamma_i'\}_{i=1,2}$ and over the possible values of $p$, recalling
\eqref{eq:ensure}
%that \cite[Th. 4.16]{cf:DKS}
%\begin{eqnarray}
%  \sum_{\gamma\subset \tilde V:\;\;w_1\stackrel\gamma\leftrightarrow w_2}
%\Psi(\gamma;V)\le e^{-\beta\tau_\beta(\vec e_1)L+c(\log L)^c}
%\end{eqnarray}
and the lower bound \eqref{eq:LBXi}, we have shown that
\begin{eqnarray}
\label{eq:A26}
  \frac{\cN_2'}{\Xi(\tau,V)}\le e^{-cL^{3\gep}}.
\end{eqnarray}

As for $\cN_2''$, using the decay properties of
the potential $\Phi$ one sees immediately that, since
$d(\gamma_1,\gamma_2)\ge L^\gep$, the mutual interaction between the
two paths can be bounded by a constant, so that
\begin{eqnarray}
  {\cN_2''}\le c\;
  \sum_{\gamma_1\subset \tilde V:\;\;\Delta_1\stackrel{\gamma_1}\leftrightarrow w_1}
  \Psi(\gamma_1;V){\bf 1}_{\{\gamma_1\cap C_1\ne \emptyset\}}
  \times
  \sum_{\gamma_2\subset \tilde V:\;\;\Delta_2\stackrel{\gamma_2}\leftrightarrow w_2}
  \Psi(\gamma_2;V).
\end{eqnarray}
Recalling \eqref{eq:denomina} one sees therefore that 
\begin{eqnarray}
\label{eq:A28}
    \frac{\cN_2''}{\Xi(\tau,V)}\le c \frac{Q}{(1-Q)^2},
\end{eqnarray}
where
\begin{eqnarray}
  Q:=\frac{\sum_{\{\gamma\subset \tilde V:\;\;
\Delta_1\stackrel{\gamma}\leftrightarrow w_1\}}
    \Psi(\gamma;V){\bf 1}_{\{\gamma\cap C_1\ne \emptyset\}}}  
  {\sum_{\{\gamma\subset \tilde V:\;\;\Delta_1\stackrel{\gamma}\leftrightarrow w_1\}}
    \Psi(\gamma;V)}
\end{eqnarray}
and we are left with the task of proving that $Q\le
\exp(-cL^{3\gep})$.  Note that $Q$ is nothing but the equilibrium
probability $\pi_V^{\hat\tau}(\gamma\cap C_1\ne\emptyset)$, where
$\gamma$ is the unique open contour for a system enclosed in $V$ and
with boundary conditions $\hat\tau$ given by $\hat\tau_x=+$ for
$x=(i,0)$ with $i<\lfloor L/2\rfloor-L^{3\gep}$ and $x=(0,i)$ with
$i\le2\lfloor (2L+1)^{1/2+\gep}\rfloor$, and $\hat\tau_x=-$ otherwise.
Morally, one would like to apply \cite[Th. 4.15]{cf:DKS} to say that
$Q\le \exp(-c L^{3\gep})$; such result however cannot be applied
directly because of the entropic repulsion effect that $\gamma$ feels
due to the South border of $V$, and we need to take a small detour.
Consider the $L$-shaped domain $W$ obtained as the union of the
rectangles $V$ and $V'$, where $V'=\{(i,j)\in\Z^2:-L^{1/2+\gep}\le
j\le 0,1\le i< \lfloor L/2\rfloor-L^{3\gep}-1\}$, with boundary
conditions $\hat\tau'$ given by $\hat \tau'=\hat \tau$ on $\partial
W\cap \partial V$ and $\hat \tau'=+$ on $\partial W\cap \partial V'$,
see Figure \ref{fig:W}.
\begin{figure}[htp]
\begin{center}
\includegraphics[width=0.8\textwidth]{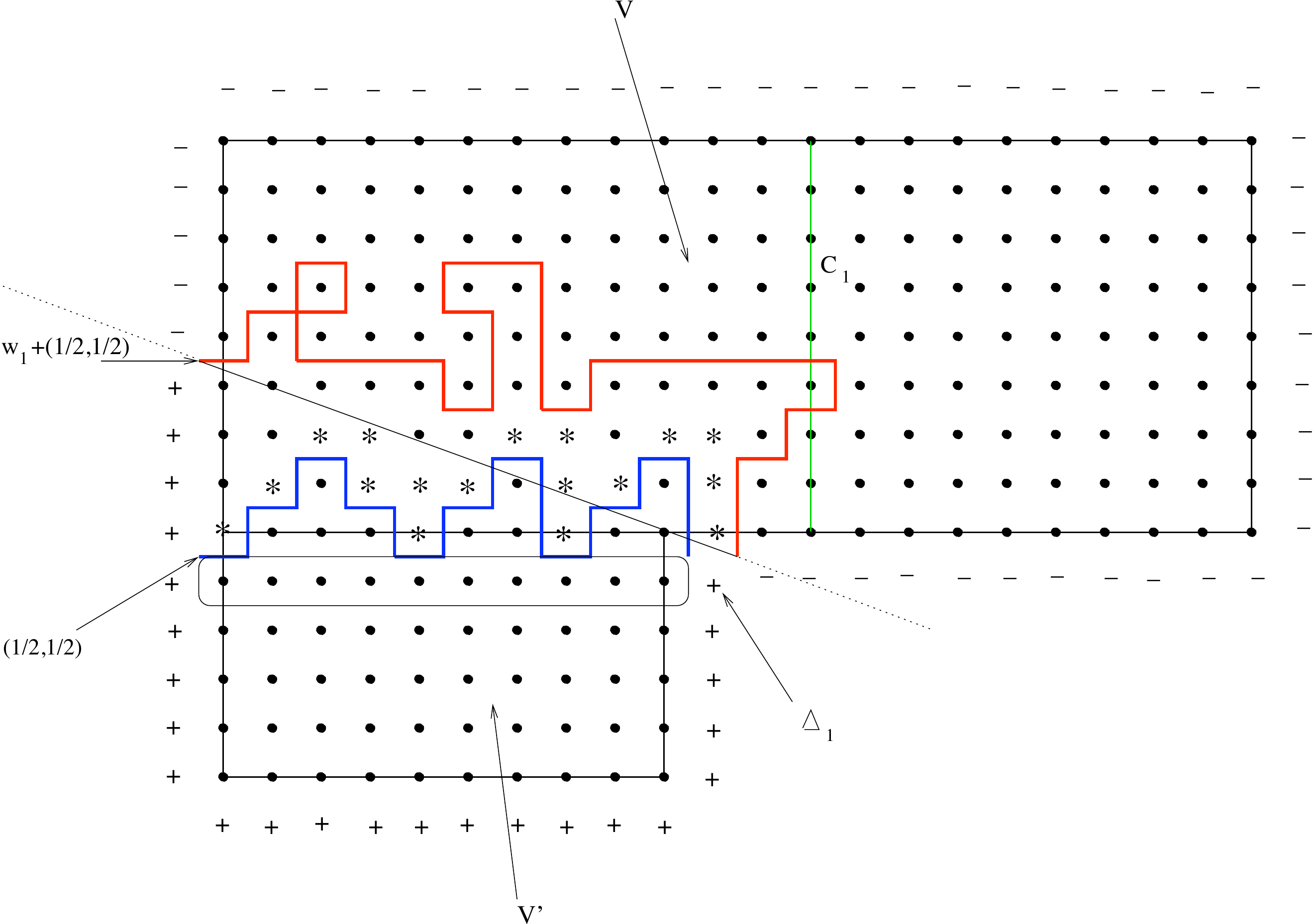}
\end{center}
\caption{The $L$-shaped domain $W$ (for graphical
  convenience, proportions are not respected in the drawing) with its boundary conditions
  $\hat\tau'$. For the construction of $\gamma'$, one should imagine that the spins in the framed region
are set to $-$. The sites marked by $*$ denote the $*$-connected set $\Delta^+(\gamma')$.
The drawn configuration of $\gamma$ is entirely above the straight line going through $w_1+(1/2,1/2)$ and $\Delta_1+(1/2,1/2)$,
\ie the spin configuration $\si$ belongs to the set $\Gamma''$ appearing in \eqref{eq:gamma''}.
}
\label{fig:W} 
\end{figure}

Below we will prove
\begin{Lemma}
\label{th:lemmaQ}
One has
\begin{eqnarray}
\label{eq:A30}
  Q=  \pi_V^{\hat\tau}(\gamma\cap C_1\ne\emptyset)\le 
  \pi_W^{\hat \tau'}(\gamma\cap C_1\ne\emptyset|\Gamma')
  \le\frac{\pi_W^{\hat \tau'}(\gamma\cap C_1 \ne \emptyset)}
{\pi_W^{\hat \tau'}(\Gamma')},
\end{eqnarray}
where  $\Gamma'=\{\si\in \O_W: \;\exists$  inside $V$ a
$*$-connected path of $+$ spins which connect the site %$w_1+(1,0)$ to
%the site 
$\Delta_1+(0,1)$ to one of the sites $(1,i)$ with $1\le i\le 2\lfloor (2L+1)^{1/2+\gep}\rfloor\}$, see Figure \ref{fig:W}.
\end{Lemma}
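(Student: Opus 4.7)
The second inequality in \eqref{eq:A30} is Bayes' formula together with $\pi_W^{\hat\tau'}(A\cap \Gamma')\le\pi_W^{\hat\tau'}(A)$, where $A:=\{\gamma\cap C_1\ne\emptyset\}$; so the substantive content is the first inequality.

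The starting observation is that $A$ is in fact an \emph{increasing} event: the condition $\gamma\cap C_1\ne \emptyset$ is equivalent to the existence of a $+$-path in $V$ from the $+$-portion of $\partial V$ to some site east of the column $C_1$, since the open contour $\gamma$ bounds the $+$-cluster attached to the $+$-boundary. The event $\Gamma'$ is increasing by definition. By FKG applied to $\pi_V^{\hat\tau}$ we then obtain $\pi_V^{\hat\tau}(A)\le \pi_V^{\hat\tau}(A|\Gamma')$, and it suffices to prove $\pi_V^{\hat\tau}(A|\Gamma')\le \pi_W^{\hat\tau'}(A|\Gamma')$.

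For this, I would introduce a canonical realizing $+$-chain $\gamma^*$ --- say the ``outermost'' one, chosen so that the event $\{\gamma^*=g\}$ is measurable with respect to spins in $V\setminus E_g\setminus g$, where $E_{\gamma^*}\subset V$ denotes the component of $V\setminus\gamma^*$ adjacent to the $-$-part of $\partial V$. The DLR equations then imply that conditionally on $\{\gamma^*=g\}$ the spins in $E_g$ have the \emph{same} distribution under $\pi_V^{\hat\tau}$ and $\pi_W^{\hat\tau'}$, namely the Ising measure $\pi_{E_g}^{+,\hat\tau}$ in $E_g$ with $+$ b.c.\ on $g$ and $\hat\tau|_{\partial V\cap \partial E_g}$ elsewhere, since $g$ shields $E_g$ from whatever lies outside. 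Setting $P(g):=\pi_{E_g}^{+,\hat\tau}(A)$, both sides of the desired inequality become mixtures $\sum_g q(g)P(g)$ with respect to two different laws $q$ of $\gamma^*$. Attaching $V'$ with $+$ b.c.\ only reinforces the $+$-bias in the south of $V$, so FKG gives that $\gamma^*$ lies stochastically further inside $V$ under $\pi_W^{\hat\tau'}(\cdot|\Gamma')$; and $P(g)$ is monotone in the same order, since a $\gamma^*$ further inside $V$ brings the effective $+$-boundary closer to $C_1$, making the $+$-connection east of $C_1$ easier.

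The main obstacle is making these two monotonicity statements precise and compatible. The cleanest route I can see is to recast $\gamma^*$ as the boundary of the outermost $*$-connected cluster of $+$ spins and to use an FK-type representation of $\pi_{E_g}$ in which both the geometry of $\gamma^*$ and the increasing event $A$ become monotone with respect to a common order, so that standard FKG yields the desired stochastic ordering directly.
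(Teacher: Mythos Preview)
Your identification of $A=\{\gamma\cap C_1\ne\emptyset\}$ as an increasing spin event is correct and is indeed the crux of the argument. However, your step 2 contains a wrong-way monotonicity. Observe that $\hat\tau$ already equals $+$ on $\partial V\cap V'$; hence, on $\Omega_V$, the measure $\pi_V^{\hat\tau}$ coincides with $\pi_W^{\hat\tau'}(\,\cdot\mid\sigma_{V'}\equiv +)$. Replacing the deterministic $+$ boundary by the random spins of $V'$ therefore \emph{lowers} the configuration: $\pi_W^{\hat\tau'}\big|_{\Omega_V}\preceq \pi_V^{\hat\tau}$, the opposite of what your sentence ``attaching $V'$ with $+$ b.c.\ only reinforces the $+$-bias'' asserts. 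Consequently the FKG heuristic you invoke for the stochastic ordering of $\gamma^*$ points the wrong way, and the intermediate inequality $\pi_V^{\hat\tau}(A\mid\Gamma')\le \pi_W^{\hat\tau'}(A\mid\Gamma')$ cannot be obtained by this route; there is no reason to believe it is even true. Your own acknowledged ``main obstacle'' is thus not a technical nuisance but a genuine obstruction.

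The paper's argument avoids comparing the two conditioned measures altogether. Working only inside $W$ and on the event $\Gamma'$, it introduces a \emph{second} open contour $\gamma'$, obtained by mentally setting the spins on $\partial V\cap V'$ to $-$ so that $\sigma_V$ carries two open contours $\gamma$ and $\gamma'$. Conditioning on $\gamma'=\xi$ forces a $*$-connected set $\Delta^+(\xi)$ of $+$-spins; the spins in the remaining region $V_\xi^{int}:=V^+(\xi)\setminus\Delta^+(\xi)$ then follow the Gibbs measure $\pi_\xi$ with $+$ b.c.\ on $\Delta^+(\xi)$ and $\hat\tau$ elsewhere, and $V_\xi^{int}$ is screened from $V'$ by $\Delta^+(\xi)$. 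The point is that $\pi_\xi$, extended to $\Omega_V$ by putting $+$ outside $V_\xi^{int}$, stochastically dominates $\pi_V^{\hat\tau}$ for \emph{every} $\xi$; since $A$ is increasing this yields $\pi_\xi(A)\ge \pi_V^{\hat\tau}(A)$ pointwise in $\xi$, and averaging over $\xi$ gives the lemma directly. No comparison of random-set laws, no monotonicity of $P(g)$, and no FK representation are needed.
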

The numerator in the right-hand side of \eqref{eq:A30} is 
smaller than $\exp(-c L^{3\gep})$. Indeed, it suffices to
remark that (cf. the notation \eqref{eq:canonico}) it is smaller than
\begin{eqnarray}
\frac{\cE_{w_1,\Delta_1}\left[
{\bf 1}_{\{\gamma\cap C_1\ne\emptyset\}}
\exp\left(\Phi_{W}(\gamma)
%\sum_{\L\subset \Z^2:\,\Lambda\cap \Delta\gamma\ne
%\emptyset,\L\cap W^c\ne\emptyset}\Phi(\L)
\right)
\right]}{\cE_{w_1,\Delta_1}\left[\exp\left(\Phi_{W}(\gamma)\right)\right]}
\label{eq:cosciscvarz}
 \le \frac{\sqrt{\cP_{w_1,\Delta_1}(\gamma\cap C_1\ne\emptyset)
\cE_{w_1,\Delta_1}\left(
\exp\left(2 \Phi_{W}(\gamma) %\sum_{\L\subset \Z^2:\,\Lambda\cap \Delta\gamma\ne
%\emptyset,\L\cap W^c\ne\emptyset}\Phi(\L)
\right)
\right)
}}{\cE_{w_1,\Delta_1}\left[\exp\left(\Phi_{W}(\gamma)\right)\right]}.
\end{eqnarray}
where $\Phi_W(\gamma)$ was defined in \eqref{eq:8}.
%\begin{eqnarray}
 % \label{eq:8}
 % \Phi_{W}(\gamma):=\sum_{\L\subset \Z^2:\,\Lambda\cap \Delta\gamma\ne
%\emptyset,\L\cap W^c\ne\emptyset}\Phi(\L).
%\end{eqnarray}
Theorem 4.15 of \cite{cf:DKS} says directly  that $$\cP_{w_1,\Delta_1}(\gamma\cap C_1\ne\emptyset)\le\exp(-c L^{3\gep}),$$ while the
fast decay of $\Phi$, together with  \cite[Th. 4.16]{cf:DKS}, implies that 
\begin{eqnarray}
  \label{eq:2}
  &&\cE_{w_1,\Delta_1}\left[
\exp\left(2 \Phi_{W}(\gamma)
\right)
\right]\le \exp(c(\log L)^c)\\
\label{eq:155}
&& \cE_{w_1,\Delta_1}\left[\exp\left(\Phi_{W}(\gamma)\right)\right]\ge \exp(-c(\log L)^c).
\end{eqnarray}
Roughly speaking, typical paths (under
$\cP_{w_1,\Delta_1}$) have a small intersection with $W^c$ (again, the
precise estimates follow from \cite[Th. 4.15]{cf:DKS}). This is why we
enlarged $V$ to $W$: if $W$ were replaced by $V$, the intersection
 would not be small any more and the expectations in
\eqref{eq:2}-\eqref{eq:155} would not be under control.
%  and the second term in
% the right-hand side of \eqref{eq:cosciscvarz} would grow exponentially
% with $L$. % Finally,
% \begin{eqnarray}
%   \label{eq:9}
%   \cE_{w_1,\Delta_1}\left[\exp\left(\Phi_{W}(\gamma)\right)\right]\ge \nep{ \cE_{w_1,\Delta_1}(\Phi_{W}(\gamma))}\ge \nep{- \sum_{\L:\,\L\cap W^c\ne\emptyset}|\Phi(\L)|\cP_{w_1,\Delta_1}(\Lambda\cap \Delta(\gamma)\ne\emptyset)}
% \end{eqnarray}
% which again thanks to \cite[Th. 4.15]{cf:DKS} is lower bounded by a constant. 

The denominator in \eqref{eq:A30} is also not difficult to deal with:
one observes (see Figure \ref{fig:W}) that the event $\Gamma'$ is
implied by the event $\Gamma''=${\it $ \{\gamma$ does not go below the
  straight line which goes through $\Delta_1+(1/2,1/2)$ and $
  w_1+(1/2,1/2)\}$} (we will write symbolically $\gamma\ge(\Delta_1
w_1)$). Indeed, the subset of $\Delta\gamma$ where spins are $+$ is
$*$-connected and satisfies the requirements of $\Gamma'$.
Therefore, $\pi_V^{\hat \tau'}(\Gamma')\ge \exp(- cL^\gep)$.
Indeed,
\begin{eqnarray}
\label{eq:gamma''}
\pi_V^{\hat \tau'}(\Gamma')\ge 
\pi_W^{\hat \tau'}(\Gamma'')=\frac{\sum_{\gamma\sim \hat\tau'}
\Psi(\gamma;W){\bf 1}_{\{\gamma\ge (\Delta_1 w_1)\}}}{\sum_{\gamma\sim \hat\tau'}
\Psi(\gamma;W)}:
\end{eqnarray}
the numerator is lower bounded by $$\exp[-\beta\tau_\beta(\vec v_{w_1\Delta_1})
d(w_1,\Delta_1)-c(d(w_1,\Delta_1))^\gep]$$
via Lemma \ref{th:lemmastrazio} (take 
$\kappa=\gep/2$) and
the denominator is upper bounded by $$\exp[-\beta\tau_\beta(\vec v_{w_1\Delta_1})
d(w_1,\Delta_1)+c(\log d(w_1,\Delta_1))^c]$$
via \cite[Th. 4.16]{cf:DKS}, where $\vec v_{w_1\Delta_1}$ is the unit vector
pointing from $w_1$ to $\Delta_1$.

Summarizing, we have obtained $Q\le \exp(-c L^{3\gep})$ and, 
via \eqref{eq:A28} and \eqref{eq:A26}, we have proven \eqref{eq:tecnica2}.

{\sl Proof of Lemma \ref{th:lemmaQ}.}  Given a configuration
$\si\in\Omega_W,$ imagine to replace all its spins in $\partial V\cap V'$ by $-$, cf. Figure \ref{fig:W}; 
then, associated to the restriction $\si_V\in \Omega_V$,
there are exactly two open contours in $\tilde V$. 
%associated to the restriction $\si_V\in \Omega_V$ with b.c. $-$ in
%$V'\cap \partial V$ and $\hat\tau$ otherwise. 
The endpoints of these two contours are
$(1/2,1/2)$, $w_1+(1/2,1/2)$, $\Delta_1+(1/2,1,2)$ and
$\Delta_1+(-1/2,1/2)$. Under the assumption that $\si\in \Gamma'$, one
sees immediately that one of the two contours connects $w_1$ to
$\Delta_1$ (this is nothing else but the open contour which we have
called $\gamma$ so far, e.g. in \eqref{eq:A30}); we will call $\gamma'$ the second open
contour, see Figure \ref{fig:W}. Given a possible configuration for $\gamma'$, $V$ is divided
into two components, call them $V^\pm(\gamma')$, where $V^-(\gamma')$
is the one ``in contact with'' $V'$.  It is clear that the intersection
$\Delta^+(\gamma'):= \Delta\gamma'\cap V^+(\gamma')$ is a
$*$-connected set (\ie any two of its points can be linked by a
$*$-connected chain belonging to $\Delta^+(\gamma')$) and all spins
are $+$ there.  It is important to remark that if we take
$\si\in\Gamma'$ and flip any spin in
$V_{\gamma'}^{int}:=V^+(\gamma')\setminus \Delta^+(\gamma')$, the
configuration of $\gamma'$ does {\sl not} change.  Also, if (with abuse of
notation) we let $\pi_{\gamma'}$ denote the equilibrium measure in
$V_{\gamma'}^{int}$ with b.c. $+$ on the portion of the boundary which
coincides with $\Delta^+(\gamma')$ and $\hat \tau$ otherwise, one has
\begin{eqnarray}
\label{eq:burni}
  \pi_{\gamma'}(\gamma\cap C_1\ne\emptyset)\ge \pi_V^{\hat\tau}(\gamma
\cap C_1\ne\emptyset),
\end{eqnarray}
by FKG since the event $\gamma\cap C_1\ne\emptyset$ is increasing.
One has then, with $\cS$ the set of possible configurations of
$\gamma'$,
\begin{eqnarray}
  \frac{\pi_W^{\hat \tau'}(\gamma\cap C_1 \ne \emptyset)}
  {\pi_W^{\hat \tau'}(\Gamma')}&\ge& \frac{\pi_W^{\hat \tau'}
    (\gamma\cap C_1 \ne \emptyset;\Gamma')}
  {\pi_W^{\hat \tau'}(\Gamma')}\\\nonumber
 & =&\sum_{\xi\in\cS}{\pi_W^{\hat \tau'}
    (\gamma\cap C_1 \ne \emptyset|\Gamma';\gamma'=\xi)}
  \frac{\pi_W^{\hat\tau'}(\Gamma';\gamma'=\xi)}{\pi_W^{\hat \tau'}(\Gamma')}
  \\\nonumber&=& \sum_{\xi\in\cS} \pi_{\xi}(\gamma\cap C_1\ne\emptyset)
\frac{\pi_W^{\hat\tau'}(\Gamma';\gamma'=\xi)}{\pi_W^{\hat \tau'}(\Gamma')}\ge 
\pi_V^{\hat\tau}(\gamma
\cap C_1\ne\emptyset),
\end{eqnarray}
where we used \eqref{eq:burni} in the second inequality.
\qed

\subsubsection{A technical lemma}
%As usual, $\gep$ is a fixed positive and small number.
Let $a:=(a_1,a_2)\in{\Z^2}^*$ and $b=(b_1,b_2)\in{\Z^2}^*$ with $b_1>a_1$.
Let $\vec v_{ab}$ be the unit vector pointing from $a$ to $b$ and
$\phi_{ab}$ be the angle which $\vec v_{ab}$ forms with $\vec e_1$.
Assume that $-\pi/4\le \phi_{ab}\le\pi/4$.  Let $A>0,\kappa>0$, let
$U_{a,b}=U_{a,b}(A,\kappa)\subset \bbR^2$ be the cigar-shaped 
region which is delimited by the two
%segment $a b$ and by the 
curves
$$x\mapsto \xi^\pm_{a,b;A, \kappa}(x):=x\tan(\phi_{ab})\pm A
\left(\frac{(x-a_1)(b_1-x)}{b_1-a_1}
\right)^{1/2+\kappa},\;\; x\in[a_1,b_1],
$$
and $U^+_{a,b}$ be the upper half of $U_{a,b}$, obtained by slicing
$U_{a,b}$ along the segment $ab$.  Also, we will denote by
$\Sigma_{a,b}=\Sigma_{a,b}(A,\kappa)$ the set of all open contours
$\gamma$ having $a$ and $b$ as endpoints, and such that every bond in
$\gamma$ has non-empty intersection with $U_{a,b}$;  similarly we define
$\Sigma^+_{a,b}$. Then,
\begin{Lemma}
  \label{th:lemmastrazio}
%  Assume that, say, $-\pi/4\le \phi_{ab}\le \pi/4$.  
  Let $\beta$ be large enough, and consider a domain $V\subset \Z^2$
  such that $\tilde V$ contains $U^+_{a,b}(A,\kappa)$ (cf. Definition 
\ref{def:VVbar}).  There exists $c$
  depending on $\beta,A,\kappa$ such that
\begin{eqnarray}
\label{eq:A34}
  \sum_{\gamma\in \Sigma^+_{a,b}}\Psi(\gamma;V)\ge 
  \exp\left[-\beta \tau_\beta(\vec v_{ab})d(a,b)-
c(d(a,b))^{2\kappa}
\right].
\end{eqnarray}
\end{Lemma}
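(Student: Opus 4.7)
The plan is to reduce \eqref{eq:A34} to a probability estimate on the geometric event $\Sigma^+_{a,b}$ under the canonical ensemble $\mathcal P_{a,b}$, and then to control that probability via the Ornstein--Zernike representation of open contours in \cite[Sec.~4]{cf:DKS}. Using $\Psi(\gamma;V)=\Psi(\gamma;\bbZ^2)\,e^{\Phi_V(\gamma)}$ with $\Phi_V$ as in \eqref{eq:8}, one rewrites
\begin{equation*}
  \sum_{\gamma\in\Sigma^+_{a,b}}\Psi(\gamma;V)=\mathcal Z_{a,b}\,\mathcal E_{a,b}\bigl[\mathbf{1}_{\Sigma^+_{a,b}}\,e^{\Phi_V(\gamma)}\bigr],
\end{equation*}
and the sharp lower bound $\mathcal Z_{a,b}\geq \exp\bigl(-\beta\tau_\beta(\vec v_{ab})\,d(a,b)-c(\log N)^c\bigr)$, with $N:=d(a,b)$, follows from \cite[Th.~4.16]{cf:DKS}; the logarithmic correction is absorbed into the target error $N^{2\kappa}$.

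Next, restrict the residual expectation to the subevent $\Sigma^+_{a,b}\cap\{|\Phi_V(\gamma)|\leq 1\}$. By Lemma \ref{th:lemma3102} together with the inclusion $\gamma\subset U^+_{a,b}\subset\tilde V$, a contour $\gamma$ having transverse fluctuations of the typical OZ scale $\sqrt N$ automatically stays at distance $\gtrsim\sqrt N$ from the chord $ab$ and therefore satisfies $|\Phi_V(\gamma)|\leq 1$; the atypical contours, being $\mathcal P_{a,b}$-exponentially rare, give only a multiplicative-constant correction. The estimate thus reduces to the purely geometric bound
\begin{equation*}
  \mathcal P_{a,b}(\Sigma^+_{a,b})\geq e^{-cN^{2\kappa}}.
\end{equation*}
To prove it, I would invoke the OZ random-walk representation of \cite[Sec.~4]{cf:DKS}: after rotating so that $\vec v_{ab}$ is horizontal, $\gamma$ decomposes into i.i.d.\ irreducible pieces with finite exponential moments, and its transverse coordinate behaves like a non-degenerate random-walk bridge of length $\sim N$. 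The upper envelope constraint $\gamma\subset U_{a,b}$ holds with $\mathcal P_{a,b}$-probability $1-e^{-cN^{2\kappa}}$ by \cite[Prop.~4.15]{cf:DKS}.

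The positivity constraint $\gamma\subset U^+_{a,b}$ (staying above the chord $ab$) is enforced by a skeleton argument: partition $[a_1,b_1]$ into $K\sim N^{2\kappa}$ equal subintervals, and force $\gamma$ to cross each interior division point at transverse height $h\sim\sqrt{(N/K)\log K}$. By telescoping of the Gaussian exponent, the pinning cost is concentrated on the two end subintervals and equals $\exp(-cKh^2/N)=\exp(-c\log K)$, only polynomial in $N$, while between consecutive checkpoints a discrete Brownian-bridge reflection estimate gives a positivity probability $1-O(K^{-2})$. The main obstacle will be the rigorous implementation of this skeleton within the discrete OZ framework: one needs a Gaussian local-limit-theorem-type lower bound on the joint density of the $K$ prescribed crossings, and control of the conditional law of the inter-checkpoint excursions as sub-OZ bridges; both follow from the analysis of irreducible pieces in \cite[Sec.~4.12--4.13]{cf:DKS} for $\beta$ sufficiently large.
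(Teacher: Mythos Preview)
Your overall strategy is different from the paper's, and it contains a genuine gap in the step where you control $\Phi_V(\gamma)$.

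\textbf{The gap.} You claim that a contour $\gamma$ with ``typical OZ fluctuations of scale $\sqrt N$'' automatically stays at distance $\gtrsim\sqrt N$ from the chord $ab$, and that this yields $|\Phi_V(\gamma)|\le 1$. This is false on both counts. Under $\mathcal P_{a,b}$ the contour oscillates \emph{around} the chord with amplitude $\sqrt N$; it certainly does not stay far from it, and even after conditioning on $\Sigma^+_{a,b}$ the contour touches the chord at $a$ and $b$ and remains $O(1)$ close to it on stretches near the endpoints. Since the hypothesis on $V$ is only $\tilde V\supset U^+_{a,b}$, the chord $ab$ can lie on $\partial\tilde V$, so proximity to the chord means proximity to $V^c$, and the contributions to $\Phi_V(\gamma)$ coming from the endpoint regions are not controlled by your argument. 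The geometric probability $\mathcal P_{a,b}(\Sigma^+_{a,b})$ can indeed be shown to be only polynomially small, but that alone does not give a lower bound on $\mathcal E_{a,b}[\mathbf 1_{\Sigma^+_{a,b}}e^{\Phi_V(\gamma)}]$ without a separate argument bounding $|\Phi_V|$ on a subevent of comparable probability. Your equal-spacing skeleton also does nothing to keep the path away from the chord near $a,b$, so it does not repair this.

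\textbf{How the paper proceeds instead.} The paper never attempts to estimate the half-cigar probability under $\mathcal P_{a,b}$. It places a chain of waypoints $z_{-n},\dots,z_n$ along the \emph{upper} curve $\xi^+_{a,b;A',\kappa}$, with \emph{dyadic} spacing in the horizontal coordinate (denser near $a$ and $b$). For each consecutive pair $z_i,z_{i+1}$, the full (two-sided) cigar $U_{z_i,z_{i+1}}(A',\kappa)$ fits inside $U^+_{a,b}$; hence \cite[Th.~4.16]{cf:DKS} applies directly to each piece, giving a lower bound $\exp[-\beta\tau_\beta(\vec v_{z_iz_{i+1}})d(z_i,z_{i+1})-c(\log d(z_i,z_{i+1}))^c]$ with no positivity constraint to handle and with $\Phi_V$ automatically under control (the interaction between adjacent pieces is $O(1)$ by decay of $\Phi$). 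The dyadic spacing is what makes the geometry work at the endpoints: the first and last pieces have length $O(1)$, so there is no entropic-repulsion issue there. Summing and invoking smoothness and convexity of $\tau_\beta(\cdot)$ then yields \eqref{eq:A34}, with the error $N^{2\kappa}$ coming from the discrepancy between $\sum_i \tau_\beta(\vec v_{z_iz_{i+1}})d(z_i,z_{i+1})$ and $\tau_\beta(\vec v_{ab})d(a,b)$.

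In short: the paper replaces one half-cigar estimate by a chain of $O(\log N)$ full-cigar estimates already available in \cite{cf:DKS}, whereas you try to deduce the half-cigar estimate directly from the full one via a conditioning argument that, as written, does not control the boundary interaction $\Phi_V$ near the endpoints.
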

This result can be obtained via a repeated use of Theorem 4.16 of
\cite{cf:DKS}.  The error term $\exp(-c\,(d(a,b))^{2\kappa})$ is very
rough (but sufficient for our purposes) and can presumably be  improved.
We do not give full details because they are a bit lengthy, although
standard, but we sketch the main steps. 

\begin{figure}[htp]
  \begin{center}
\includegraphics[width=0.9\textwidth]{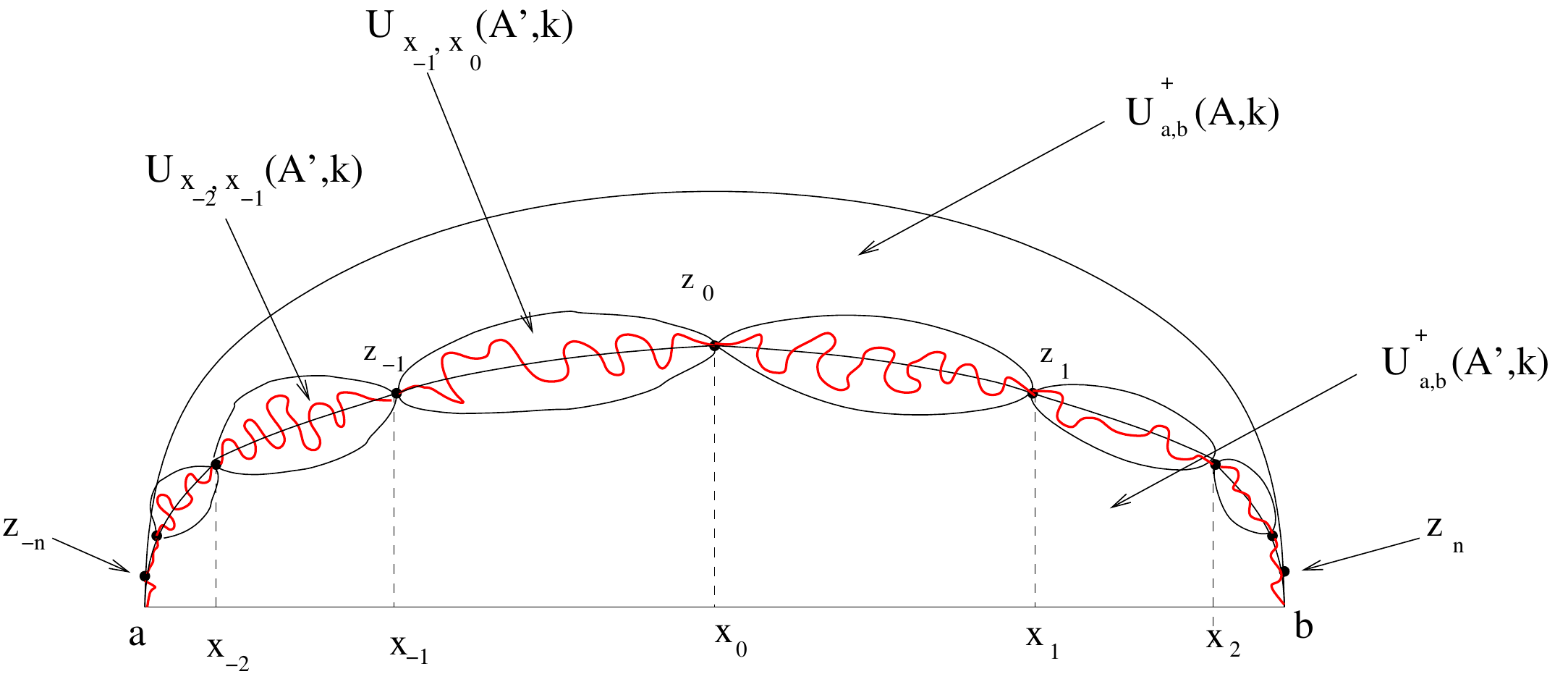}
\end{center}
\caption{A typical path $\gamma$ which contributes to the lower bound
  \eqref{eq:A34}.  For graphical convenience, we have assumed that $a$
  and $b$ have the same vertical coordinate, and not all the
  cigar-shaped sets $U_{z_i,z_{i+1}}(A',\kappa)$ have been drawn.  }
\label{fig:catenella} 
\end{figure}

First of all, let for simplicity of notations $L:=b_1-a_1$ and $A':=A/10$.
Then, one proceeds as follows (keep in mind Figure \ref{fig:catenella}):
\begin{itemize}
\item for every $-n\le i\le n$, with $n=\log_2(L)-2$, let
$z_i=(x_i,y_i)$  be a point in ${\Z^2}^*$ at minimal distance from
$(\tilde x_i,\xi^+_{a,b;A',\kappa}(\tilde x_i))$, where
\begin{eqnarray}
  \tilde x_i:=a_1+(b_1-a_1)
\left(\frac12+\frac{{\rm sign}(i)}4\sum_{j=0}^{|i|-1}2^{-j}\right);
\end{eqnarray}

\item remark via elementary geometrical considerations that for every
  $-n\le i<n$, the cigar-shaped set $U_{z_i,z_{i+1}}(A',\kappa)$ is
  entirely contained in $U^+_{a,b}(A,\kappa)$;

\item restrict the sum \eqref{eq:A34} to the paths $\gamma$ which,
  when oriented from $a$ to $b$, go through the points
  $z_{-n},z_{-n+1},\ldots, z_n$ (in this order), and such that the
  portion of the path between $z_{i}$ and $z_{i+1}$ belongs to
$\Sigma_{z_i,z_{i+1}}(A',\kappa)$;

\item remark that, via the decay properties of the potential $\Phi$, the 
interaction between two adjacent portions of $\gamma$ just defined can 
be bounded above by a constant;

\item apply Theorem 4.16 of \cite{cf:DKS} to write that for every
$-n\le i<n$ one has
\begin{eqnarray}
  \sum_{\gamma\in \Sigma_{z_i,z_{i+1}}(A',\kappa)}\Psi(\gamma;V)\ge
\exp\left[-\beta\tau_\beta(\vec v_{z_i,z_{i+1}})d(z_i,z_{i+1})-c
(\log d(z_i,z_{i+1}))^c\right],
\end{eqnarray}
for some constant $c$ depending on $A,\kappa,\beta$.
As for 
the two portions of $\gamma$ from $a$ to $z_{-n}$ and from $z_n$ to $b$,
they give a multiplicative contribution of order $1$  to \eqref{eq:A34}
(this is because $d(a,z_{-n})=O(1)$ and 
$d(b,z_{n})=O(1)$, as is immediately seen from the definition of $n$);

\item put together the estimates on the contributions coming from the $2n+3$
  portions of $\gamma$ obtained in the previous point: using the
  convexity and smoothness properties of the surface tension
  $\tau_\beta(\cdot)$, one obtains the claim of the lemma.
\end{itemize}

\section*{Acknowledgments}
We are extremely grateful to Senya Shlosman and to Yvan Velenik for
valuable help on low-temperature equilibrium estimates.  Part of this work was done during the authors' stay at the Institut Henri Poincar\'e - Centre Emile Borel
during the semester ``Interacting particle systems, statistical mechanics and probability theory''. The authors thank this institution for
 hospitality and support.

\end{document}